\theoremstyle{plain}
\newtheorem{definition}{Definition}
\newtheorem{proposition}{Proposition}
\newtheorem{remark}{Remark}
\numberwithin{equation}{section} 
\newcommand{\R}{{I\!\!R}}
\newcommand{\p}{\partial}
\newcommand{\re}{{I\!\!R}}
\newcommand{\ren}{\re^N}
\newcommand{\dyle}{\displaystyle}
\newcommand{\ene}{{I\!\!N}}
\newcommand{\io}{\int\limits_\O}
\newcommand{\sob}{W_0^{1,2}}
\renewcommand{\a }{\alpha }
\renewcommand{\b }{\beta }
\renewcommand{\d }{\delta }
\newcommand{\D }{\Delta }
\newcommand{\e }{\varepsilon }
\newcommand{\g }{\gamma}
\renewcommand{\l }{\lambda }
\newcommand{\n }{\nabla }
\newcommand{\s }{\sigma }
\renewcommand{\O }{\Omega }
\newcommand{\inn}{\mbox{ in }}
\newtheorem{Theorem}{Theorem}[section]
\newtheorem{Corollary}[Theorem]{Corollary}
\newtheorem{Definition}[Theorem]{Definition}
\newtheorem{Lemma}[Theorem]{Lemma}
\newtheorem{Proposition}[Theorem]{Proposition}
\newtheorem{remarks}[Theorem]{Remarks}
\begin{document}

\title[KPZ with fractional diffusion]{Towards a deterministic  KPZ equation with fractional diffusion: The stationary problem}
\author[B. Abdellaoui, I. Peral]{Boumediene Abdellaoui and Ireneo Peral }
\thanks{ This work is partially supported by project
MTM2013-40846-P, MINECO, Spain. } \keywords{fractional diffusion, nonlinear gradient terms, stationary KPZ equation
\\
\indent  {\it Mathematics Subject Classification:MSC 2010:  35B65, 35J62, 35R09, 47G20.} }

\address{\hbox{\parbox{5.7in}{\medskip\noindent {B. Abdellaoui, Laboratoire d'Analyse Nonlin\'eaire et Math\'ematiques
Appliqu\'ees. \hfill \break\indent D\'epartement de Math\'ematiques, Universit\'e Abou Bakr Belka\"{\i}d, Tlemcen, \hfill\break\indent Tlemcen 13000,
Algeria.}}}}
\address{\hbox{\parbox{5.7in}{\medskip\noindent{I. Peral, Departamento de Matem\'aticas,\\ Universidad Aut\'onoma de Madrid,\\
        28049, Madrid, Spain. \\[3pt]
        \em{E-mail addresses: }\\{\tt boumediene.abdellaoui@inv.uam.es, \tt ireneo.peral@uam.es}.}}}}

\date{}

\begin{abstract}
In this work we analyze the existence of solution to the fractional quasilinear problem,
\begin{equation*}
\left\{
\begin{array}{rcll}
(-\Delta)^s u &= & |\nabla u|^{q}+\l f & \text{ in }\Omega , \\ u &=& 0 &\hbox{  in } \mathbb{R}^N\setminus\Omega,\\ u&>&0 &\hbox{ in }\Omega,
\end{array}%
\right.
\end{equation*}%
where $\Omega \subset \ren$ is a bounded regular domain ($\mathcal{C}^2$  is sufficient), $s\in (\frac 12, 1)$, $1<q$ and $f$ is a measurable nonnegative function
with  suitable hypotheses.

The analysis is done separately in three cases, subcritical, $1<q<2s$, critical, $q=2s$, and supercritical, $q>2s$.
\end{abstract}

\maketitle

\section{Introduction.}

\label{section00} 

The aim of this paper is to discuss the existence of a weak solution of the following nonlocal elliptic problem, with a gradient term,
\begin{equation}\label{P}
\left\{
\begin{array}{rcll}
(-\Delta)^s u &= & |\nabla u|^{q}+\l f & \text{ in }\Omega , \\ u &=& 0 &\hbox{  in } \mathbb{R}^N\setminus\Omega,\\ u&>&0 &\hbox{ in }\Omega,
\end{array}%
\right.
\end{equation}%
where $\Omega \subset \ren$ is a bounded regular domain ($\mathcal{C}^2$  is sufficient), $s\in (\frac 12, 1)$, $1<q$ and $f$ is a measurable nonnegative
function. Here  $(-\Delta)^s$  means the  classical fractional Laplacian operator of order $2s$ defined by
\begin{equation}\label{fraccionario}
(-\Delta)^{s}u(x):=a_{N,s}\mbox{ P.V. }\int_{\mathbb{R}^{N}}{\frac{u(x)-u(y)}{|x-y|^{N+2s}}\, dy},\, s\in(0,1),
\end{equation}
where
$$a_{N,s}:=2^{2s-1}\pi^{-\frac N2}\frac{\Gamma(\frac{N+2s}{2})}{|\Gamma(-s)|},$$
is the normalization constant  to have the identity
$$(-\Delta)^{s}u=\mathcal{F}^{-1}(|\xi|^{2s}\mathcal{F}u),\, \xi\in\mathbb{R}^{N}, s\in(0,1),$$
in $\mathcal{S}(\mathbb{R}^N)$, the class of Schwartz functions.

 Our goal is to find \textit{natural} conditions on  $f$ in  order to obtain the existence of positive solution.

The problem \eqref{P}  can be seen as a Kardar-Parisi-Zhang stationary equation with fractional diffusion. See \cite{KPZ} for the derivation of the model in the
local case.

In this sense, since the model KPZ assume the growth of the interface in the direction of its normal, it seems to be natural to assume that $s>\frac 12$.

The local problem was widely studied  by many authors, see for instance \cite{BMP0}, \cite{BMP1}, \cite{BMP2}, \cite{BMP3}, \cite{BMP4}, \cite{BGO}, \cite{GMP},
\cite{HMV}, \cite{ADP}, \cite{FM1}, \cite{FM2} and the references therein. In the above works, the case $q\le 2$ (subcritical and critical growth) is deeply
studied. The existence of solution is obtained using \textit{a priori} estimates, that follow using suitable test functions, and comparison principles. In the
last three references some sharp regularity results are obtained. As a consequence, the authors in \cite{ADP} were able to prove a characterization of all
positive solutions which implies a \textit{wild} non uniqueness result. We refer also to the recent paper \cite{hairier} where a fine analysis of the stochastic
case is considered.

The supercritical case, $q>2$, has been studied, for instance, in the references \cite{HMV}, \cite{CON1} and \cite{CON2}. The existence of solution is obtained
using Potential Theory and some fixed point arguments. It is clear that,  in any case, the existence results are guaranteed under regularity hypotheses on $f$ and
smallness condition on $\l$.

We refer to \cite{W}, \cite{W1} for some physical motivations and results for the corresponding evolutionary problem.

The aim of this article is the analysis of the nonlocal case. There are significative differences with the local case. First at all it is necessary to identify
the critical growth in the fractional setting. By homogeneity,  the critical power seems to be $q=2s$ and in fact, this is the threshold to use the comparison
techniques when $q<2s$.  It is worthy to point out that in a such critical growth, there is no known a change of variables similar to the Hopf-Cole change in the
local case. Moreover the techniques of nonlinear test functions, in general, are difficult to adapt in the nonlocal problem.

One of the main tools in our analysis  are sharp estimates on the Green function of the \textit{fractional laplacian } obtained in \cite{BJ}.  Some interesting
results by using such estimates, appear in the papers by Chen-Veron  \cite{CV1} and \cite{CV2}. They consider   nonlinear term $|\n u|^q$ with
$q<p_*:=\frac{N}{N-2s+1}$ and they obtain  sharp existence results for $f$ a Radon measure.

We will concentrate  to cover the range $q\ge p_*$, it seems that  the argument used in \cite{CV1}, \cite{CV2} can not be extended directly to this range and then
we need to use a different approach. To deal with the subcritical case $q<2s$, we will prove a new comparison principles in the spirit of  \cite{APi} and
\cite{P}. This comparison result  allow us to prove the existence and the uniqueness of \textit{nice} solution to problem \eqref{P}. In the case  $q<p_*$, we are
able to prove the  uniqueness of solution for all datum in $L^1$. The techniques based on the  comparison principle has a serious limitation in the critical and
supercritical cases, that is,  for $q\ge 2s$. Such difficulties drive us to only get a super-solution when we start from an ordered family of approximated
problems in order to solve the problem \eqref{P}. To overcame this lack of compactness, we will use estimates from potential theory and we will apply fixed point
argument inspired from the papers  \cite{CON1} and \cite{CON2} in  the local case.

The paper is organized as follows.

In Section \ref{sec2} we begin by some basic results about problem with general  datum in $L^1$ or in the space of Radon measures. As it was observed in the local
case, existence of solution to problem \eqref{P} is strongly related to the regularity of the solution to problem
\begin{equation}
\left\{
\begin{array}{rcll}
(-\Delta)^s v &=& f & \mbox{ in }\Omega, \\ v &=& 0 & \mbox{  in } \mathbb{R}^N\setminus\Omega.
\end{array}%
\right.  \label{PQ}
\end{equation}
In the same  section we will also precise the sense in which the solutions are understood and, for  $s>\frac 12$, we  will establish  the regularity of the
solution $v$ to problem \eqref{PQ} according to the regularity of the datum $f$. In Section \ref{sec3} we state the  comparison principle  to be used in the
subcritical case. The proof relies on a Harnack type inequality of the fractional operator perturbed by a first order linear term. Once obtained the comparison
principle, we are able to prove  existence and uniqueness results for approximated problems and to obtain the uniqueness result for the problem studied in
\cite{CV2}, that is, for $q<p_*$. Problem \eqref{P} with $q<2s$ is treated in Section \ref{sec4}. The proof of existence of solution uses  the comparison
principle and  the construction of  a suitable supersolution. As in the local framework, the existence of a solution will be guaranteed under additional
hypotheses on $f$ and smallness condition on $\lambda$. The compactness argument used in this section can not be used to treat the critical and the supercritical
cases, for this reason the analysis of these two cases will be performed in Section \ref{sec5} by using suitable estimates from Potential Theory and  the
classical Schauder fixed point theorem. Finally, in the last Section we collect some  open problems that seem to be interesting to solve.
\section{Preliminaries and auxiliary results.}\label{sec2}
In this section we present some useful results about the problem
\begin{equation}\label{eq:def}
\begin{cases}
(-\Delta )^s v=\nu &\hbox{   in   }  \Omega,\\ v=0   &\hbox{   in   } \mathbb{R}^N\setminus\Omega,
\end{cases}
\end{equation}
where $\nu$ is a bounded Radon measure. We give   some  definitions about the set of Radon measures and the sense in which a solution to problem \eqref{eq:def} is
considered (see \cite{CV1} and \cite{CV2}).
\begin{definition}\label{def001}
Let $\mu$ be a bounded Radon measure and $\beta>0$. We say that  $\mu\in \mathfrak{M}(\Omega, d^\beta)$  if
$$\int_\Omega d^\beta(x) d|\mu|<+\infty,$$ with $d(x)=\text{dist}(x,\partial\Omega)$.
\end{definition}

In the same way, if $f$ is a locally integrable function, then $f\in \mathfrak{M}(\Omega, d^\beta)$ if
$$\int_\Omega |f(x)| d^\beta(x)dx<+\infty.$$

It is clear that if $f\in L^1(\O)$, then $f\in \mathfrak{M}(\Omega, d^\beta)$ for all $\beta>0$.

Next we  precise the sense in which solutions are defined for this general class of data.

\begin{definition}\label{def1}
We say that $u$ is a weak solution to problem \eqref{eq:def} if $u\in L^1(\O)$, and for all $\phi\in \mathbb{X}_s$, we have
$$
\io u(-\Delta )^s\phi dx =\io \phi d\nu,
$$
where
$$
\mathbb{X}_s\equiv \Big\{\phi\in \mathcal{C}(\ren)\,|\,\text{supp}(\phi)\subset \overline{\O},\,\, (-\Delta )^s\phi(x) \hbox{ is pointwise defined and   }
|(-\Delta )^s\phi(x)|<C \hbox{ in  } \O\Big\}.
$$
\end{definition}

The functional framework to obtain solution to truncated problem is the fractional Sobolev space given in the next definition. See \cite{dine} and \cite{LPPS}.
\begin{Definition}\label{Hs} {For  $0<s<1$,} we define  the fractional Sobolev space of order $s$ as
$$H^s(\mathbb{R}^N):=\{u\in L^2(\mathbb{R}^N)\,\,|\,\,  \int_{\mathbb{R}^N}\int_{\mathbb{R}^N}\frac{|u(x)-u(y)|^2}{|x-y|^{N+2s}}dxdy<+\infty   \}.$$
\end{Definition}
We define now the space $H_0^s (\Omega)$ as the completion of $\mathcal{C}^\infty_0(\Omega)$ with respect to the norm of $H^s(\R^N)$. Notice that if  $u\in
H^s_0(\Omega)$, we have $u=0 \hbox{ a.e. in } \R^N\setminus \Omega$ and we can write
$$
\int_{\ren}\int_{\ren} \frac{|u(x)-u(y)|^2}{|x-y|^{N+2s}}\,dx\,dy=\iint_{D_\Omega} {\frac{|u(x)-u(y)|^2}{|x-y|^{N+2s}}\,dx\,dy}
$$
where
$$
{D_\Omega} := \ren \times \ren \setminus \big( \mathcal{C} \Omega \times  \mathcal{C} \Omega \big) \,.
$$

\begin{definition}\label{tron}
For $\sigma\in\mathbb{R}$, we set
$$T_k(\s) =\max(-k, \min(k,\s))\quad \hbox{    and   } \quad  G_k(\sigma)=\sigma-T_k(\sigma).$$
\end{definition}

From \cite{LPPS}, \cite{CV1} and \cite{AAB} the following result holds.
\begin{Theorem}\label{entropi}
Assume that $f\in L^1(\O)$, then problem \eqref{eq:def}  has a unique weak  solution $u$  that is obtained as the limit of $\{u_n\}_{n\in \mathbb{N}}$, the
sequence of the unique solutions to the approximating problems
\begin{equation}\label{proOO}
\left\{\begin{array}{rcll} (-\Delta)^s u_n &= & f_n(x) & \mbox{  in  }\O,\\ u_n &= & 0 & \mbox{ in } \ren\backslash\O,
\end{array}
\right.
\end{equation}
with $f_n=T_n(f)$.  Moreover,
\begin{equation} \label{tku}
T_k(u_n)\to T_k(u)\hbox{  strongly in }   H^{s}_{0}(\Omega), \quad \forall k > 0,
\end{equation}
\begin{equation} \label{L1u}
u \in L^\theta(\Omega) \,, \qquad  \forall  \ \theta\in \big(1, \frac{N}{N-2s}\big)\,
\end{equation}
 and
\begin{equation}\label{L1du}
\big|(-\Delta)^{\frac{s}{2}}   u\big| \in L^r(\Omega) \,, \qquad \forall  \  r \in \big(1,  \frac{N}{N-s} \big) \,.
\end{equation}
Moreover,
\begin{equation}\label{L1duu}
u_n\to u\mbox{  strongly in } W^{s,q_1}_0(\O)\mbox{  for all   }q_1<\frac{N}{N-2s+1}.
\end{equation}
\end{Theorem}

\

\begin{remarks}

\begin{enumerate}

\

\item In \cite{CV1}, the authors proved the existence and the uniqueness of a weak solution to problem \eqref{eq:def}, in the sense of Definition \ref{def1}
    for all bounded Radon measure $f$.

\item In \cite{AAB}, the more general framework  of the fractional $p$-Laplacian operator with nonnegative datum is studied. The uniqueness of nonnegative
    solution in the entropy setting is proved. Since in this work we are considering the linear case $p=2$, then existence and uniqueness of weak solutions
    are obtained without any sign condition on the datum.

\end{enumerate}
\end{remarks}

Let $\mathcal{G}_s(x,y)$ be the Green function associate the fractional laplacian $(-\Delta)^s$, then $\mathcal{G}_s(x,y)$ solves the problem
\begin{equation}\label{green0}
\left\{\begin{array}{rcll} (-\Delta)^s_y \mathcal{G}_s(x,y) &= & \delta_x & \mbox{  if  }y\in \O,\\ \mathcal{G}_s(x,y) &= & 0 & \mbox{ if } y\in
\ren\backslash\O,
\end{array}
\right.
\end{equation}
where $x\in \O$ is fixed.

In the case where $\O=\ren$, then using the Fourier transform, we know that $\mathcal{G}_{s,{\ren}}(x,y)=\dfrac{C}{|x-y|^{N-2s}}$ for a normalizing constant $C$.
As a consequence we easily get that $ \mathcal{G}_s(x,y)\le \mathcal{G}_{s,{\ren}}(x,y)$.

It is clear that if $s\in (\frac 12,1)$, then the "classical" Gradient of $\mathcal{G}_{s,{\ren}}$ is locally integrable in $\ren$.

Assume now that $\Omega$ is a bounded regular domain of $\ren$, since, in general, we have not an explicit formula to the Green function, then using a
\textbf{"probabilistic tools"}, the authors in \cite{BJ00}, \cite{BJ} and \cite{CZ} (see also \cite{BJ0,BJ1,BJ}) were able to drive some useful properties of the
Green function and its gradient when $s\in (\frac 12, 1)$. These properties are collected in the next Lemma.

{\begin{Lemma}\label{estimmm}
Assume that  $s\in (0,1)$, then
\begin{equation}\label{green00}
\mathcal{G}_s(x,y)\simeq \frac{1}{|x-y|^{N-2s}}\bigg(\frac{d^s(x)}{|x-y|^{s}}\wedge 1\bigg) \bigg(\frac{d^s(y)}{|x-y|^{s}}\wedge 1\bigg),
\end{equation}
in particular we have
\begin{equation}\label{green1}
\mathcal{G}_s(x,y)\le C_1\min\{\frac{1}{|x-y|^{N-2s}}, \frac{d^s(x)}{|x-y|^{N-s}}, \frac{d^s(y)}{|x-y|^{N-s}}\}.
\end{equation}
If $s\in (\frac 12,1)$, then
\begin{equation}\label{green2}
|\n_x \mathcal{G}_s(x,y)|\le C_2 \mathcal{G}_s(x,y)\max\{\frac{1}{|x-y|}, \frac{1}{d(x)}\}.
\end{equation}
Therefore, for all $x,y\in \O$,
\begin{equation}\label{greenm01}
|\n_x \mathcal{G}_s(x,y)|\le \frac{C_3}{|x-y|^{N-2s+1}d^{1-s}(x)}.
\end{equation}
\end{Lemma}}

{
\begin{proof}
Notice that estimates \eqref{green00}, \eqref{green1} follow form \cite{KT2}, \cite{BJ0}, however \eqref{green2} follows from \cite{BJ0}, \cite{BJ1} and \cite{BJ}. Then we have just to prove \eqref{greenm01}. Let $(x,y)\in
\O\times \O$ be such that
$$
\max\{\frac{1}{|x-y|}, \frac{1}{d(x)}\}=\frac{1}{|x-y|},
$$
then using \eqref{green1} and \eqref{green2}, if follows that
\begin{eqnarray*}
|\n_x \mathcal{G}_s(x,y)| &\le & \frac{C_2 \mathcal{G}_s(x,y)}{|x-y|}\le \frac{C_4}{|x-y|^{N-2s+1}}\\ &\le & \frac{C_5}{|x-y|^{N-2s+1}d^{1-s}(x)}.
\end{eqnarray*}
Now, if
$$
\max\{\frac{1}{|x-y|}, \frac{1}{d(x)}\}=\frac{1}{d(x)},
$$
then
\begin{eqnarray*}
|\n_x \mathcal{G}_s(x,y)| &\le & \frac{C_2 \mathcal{G}_s(x,y)}{d(x)}\le \frac{C_4}{d^{1-s}(x)|x-y|^{N-s}}\\ &\le & \frac{C_5}{|x-y|^{N-2s+1}d^{1-s}(x)},
\end{eqnarray*}
where in the last inequality we have used the fact that for all $x,y\in \O$,
$$
\frac{1}{|x-y|^{N-s}}\le \frac{\tilde{C}}{|x-y|^{N-2s+1}},
$$
which follows using the fact that $\O$ is bounded and $s>\frac 12$. Hence we conclude.
\end{proof}

\begin{remarks}\label{obs0}
From \eqref{green00} and \eqref{green1}, we can show that
\begin{equation}\label{neww0}
\mathcal{G}_s(x,y)\le C_1\frac{d^{s}(y)d^{s}(y)}{|x-y|^{N}},
\end{equation}
and for all $\beta\in (0,1)$, we have
\begin{equation}\label{green101}
\mathcal{G}_s(x,y)\le C_1\frac{d^{\beta s}(y)d^{(1-\beta) s}(y)}{|x-y|^{N-s}},
\end{equation}
\begin{equation}\label{green102}
\mathcal{G}_s(x,y)\le C_1\frac{d^{\beta s}(y)}{|x-y|^{N-s(2-\beta)}}.
\end{equation}
\end{remarks}
}

\begin{remarks}\label{obs1}

It is worthy to point out that since the fractional laplacian is the infinitesimal generator of stable Lévy processes, then as it was proved in \cite{BJ00},
\cite{KT}, if $f$ is a regular bounded function, then we have the next representation formula for the unique solution to problem \eqref{eq:def},
\begin{equation}\label{repres1}
u(x)= \io \mathcal{G}_s(x,y)f(y)dy=E^x\bigg(\int_0^{\tau_\O}f(X_\s)d\s\bigg)
\end{equation}
where $(X_\s)$ is $2s$-stable process in $\ren$ and $\tau_\O$ is the first exit time of $X_t$ from $\Omega$. We refer to \cite{BBK}, \cite{APP} for more details
about Levy process and it connection with the fractional Laplacian.
\end{remarks}

We consider now the general case $f\in L^1(\O)$. As it was stated in Theorem \ref{entropi}, the problem \eqref{eq:def} has a unique weak solution $u$ in the sense
of Definition \ref{def1}.

Define
\begin{equation}\label{main-rep}
w(x)=\io \mathcal{G}_s(x,y)f(y)dy,
\end{equation}
we show that $w$ is well defined and that $u=w$.

Let us begin by the next useful tools form Fourier analysis that will be used systematically in the rest of the paper.
\begin{Lemma}\label{ineq1}
Let $0<\lambda<N$,  $1\le p<l<\infty$ be such that  $\dfrac{1}{l}+1=\dfrac{1}{p}+\dfrac{\l}{N}$. For $g\in L^p(\ren)$, we define $$J_\lambda(g)(x)=\int_{\ren}
\dfrac{g(y)}{|x-y|^\l}dy.$$
\begin{itemize}
\item[$a)$] $J_\lambda$ is well defined in the sense that the integral converges absolutely for  almost all $x\in \mathbb{R}^N$. \item[$b)$] If $p>1$, then
    $|| J_\lambda(g)||_l\le c_{p,q}||g||_p$. \item[$c)$] If $p=1$, then $|\{x\in \mathbb{R}^N\,| J_\lambda(g)(x)>\sigma\}|\le \big(\dfrac{
    A||g||_1}{\sigma}\big)^l$.
\end{itemize}
\end{Lemma}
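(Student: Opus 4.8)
The assertion is the Hardy--Littlewood--Sobolev inequality for the Riesz-type potential $J_\lambda$, together with its weak-type endpoint; here $l=q$, so that when $p=1$ one has $q=N/\lambda$. In all three parts one may assume $g\ge0$, and I would first record that the scaling relation $\tfrac1q+1=\tfrac1p+\tfrac\lambda N$ gives $\lambda=\tfrac Nq+\tfrac N{p'}$, in particular $\lambda p'>N$. For part $a)$ the plan is to split the kernel $k(z)=|z|^{-\lambda}$ as $k=k\,\mathbf 1_{\{|z|<1\}}+k\,\mathbf 1_{\{|z|\ge1\}}$: the first summand lies in $L^1(\ren)$ because $\lambda<N$, and the second lies in $L^{p'}(\ren)$ because $\lambda p'>N$. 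Young's inequality then shows $\big(k\,\mathbf 1_{\{|z|<1\}}\big)*g\in L^p(\ren)$, hence is finite a.e., while H\"older's inequality shows $\big(k\,\mathbf 1_{\{|z|\ge1\}}\big)*g$ is bounded; adding, $J_\lambda(g)(x)<\infty$ for a.e.\ $x$.

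For part $c)$, fix $\sigma>0$ and truncate the kernel at a radius $R$: write $J_\lambda(g)(x)=\int_{|x-y|<R}\frac{g(y)}{|x-y|^\lambda}\,dy+\int_{|x-y|\ge R}\frac{g(y)}{|x-y|^\lambda}\,dy$. The second integral is bounded pointwise by $R^{-\lambda}\|g\|_1$, so the choice $R=(2\|g\|_1/\sigma)^{1/\lambda}$ forces it below $\sigma/2$ and hence $\{J_\lambda(g)>\sigma\}\subset\big\{(k\,\mathbf 1_{\{|z|<R\}})*g>\sigma/2\big\}$. Since $\|k\,\mathbf 1_{\{|z|<R\}}\|_1\le c_N R^{N-\lambda}$, Young's and Chebyshev's inequalities give $|\{J_\lambda(g)>\sigma\}|\le\tfrac{2c_N}{\sigma}R^{N-\lambda}\|g\|_1$; substituting the value of $R$ and using $N/\lambda=q$ yields exactly $(A\|g\|_1/\sigma)^{q}$.

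Part $b)$ is the substantive point. By homogeneity assume $\|g\|_p=1$. Splitting again at a radius $R$, to be chosen depending on $x$, H\"older's inequality together with $\lambda p'>N$ gives $\int_{|x-y|\ge R}|x-y|^{-\lambda}g(y)\,dy\le C\,R^{N/p'-\lambda}=C\,R^{-N/q}$, while a dyadic decomposition of the ball $B(x,R)$ yields the Hedberg-type pointwise bound $\int_{|x-y|<R}|x-y|^{-\lambda}g(y)\,dy\le C\,R^{N-\lambda}\,Mg(x)$, where $M$ is the Hardy--Littlewood maximal operator. Adding and optimizing in $R$ (the two exponents combine to $R^{N/p}$, so one takes $R=(Mg(x))^{-p/N}$) gives the pointwise estimate $J_\lambda(g)(x)\le C\,(Mg(x))^{p/q}$. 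Raising to the power $q$, integrating, and invoking the Hardy--Littlewood maximal inequality, which is where $p>1$ enters, yields $\|J_\lambda(g)\|_q^q\le C\|Mg\|_p^p\le C'\|g\|_p^p=C'$; undoing the normalization proves $b)$.

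The hard part is precisely the local term in $b)$: estimating $\int_{|x-y|<R}|x-y|^{-\lambda}g$ and converting the resulting pointwise bound into an $L^q$ estimate. The maximal-function route above is the cleanest; alternatively one can prove, by the radius-truncation trick of part $c)$, that $J_\lambda$ is of weak type $(p,q)$ for every admissible exponent pair and then conclude by the Marcinkiewicz interpolation theorem, but that is longer. Either way the scaling identity $\tfrac1q+1=\tfrac1p+\tfrac\lambda N$ is what balances the near and far regimes.
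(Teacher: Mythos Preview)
Your proof is correct. The paper itself does not prove this lemma: it simply refers the reader to Section~1.2 of Chapter~V in Stein's \emph{Singular Integrals and Differentiability Properties of Functions}. So there is nothing in the paper to compare against directly; you have supplied a full argument where the authors gave only a reference.

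It may still be worth noting that the route you chose for part~$b)$ is the Hedberg pointwise inequality $J_\lambda(g)(x)\le C\,(Mg(x))^{p/q}\|g\|_p^{1-p/q}$, whereas Stein's original proof (the one cited) proceeds exactly along the lines you describe as the ``alternative'': he proves the weak-type $(p,q)$ estimate for every admissible pair by the same radius-truncation trick you used in part~$c)$, and then upgrades to strong type via the Marcinkiewicz interpolation theorem. Your approach is shorter and yields a genuinely pointwise bound, at the cost of importing the Hardy--Littlewood maximal theorem; Stein's route is more self-contained but requires interpolation machinery. Both are standard, and you correctly identified where each argument uses $p>1$.
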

See  for instance Section 1.2 of  Chapter V in \cite{Stein} for the proof.

\

We return now to complete the proof of the fact that $w=u$.

Notice that the function $w$ is well defined: this follows since $f\in L^1(\O)$, taking into consideration that
$$
\mathcal{G}_s(x,y)\le \frac{C}{|x-y|^{N-2s}},
$$
and using Lemma \ref{ineq1}.

From the definition of $\mathcal{G}_s$, we deduce that $w=0$ in $\ren\backslash \O$.

Recall that $u$ is obtained as a limit of the sequence $\{u_n\}_n$ where $u_n$ is the unique solution to the approximating problem \eqref{proOO}. Choosing $f_n$
regular and bounded, then by Remark \ref{obs1}, the solution $u_n$ is given by
\begin{equation}\label{rep1n}
u_n(x)=\io \mathcal{G}_s(x,y)f_n(y)dy.
\end{equation}
Thus
$$
|w(x)-u_n(x)|\le \io \mathcal{G}_s(x,y)|f(y)-f_n(y)|dy\le C\io \dfrac{|f(y)-f_n(y)|}{|x-y|^{N-2s}}dy.
$$
Hence by the classical lemma \ref{ineq1}, it holds that
$$
|\{x\in \mathbb{R}^N\,|w(x)-u_n(x)|>\sigma\}|\le \big(\dfrac{ A||f-f_n||_{L^1(\O)}}{\sigma}\big)^l, \mbox{  for all }l<\frac{N}{N-2s}
$$
Letting $n\to \infty$, we conclude that $u_n\to w$ in the Marcinkiewicz space $M^{\frac{N}{N-2s}}(\O)$ and then $u_n\to w$ strongly in the Lebesgue space
$L^\theta(\O)$ for all $\theta<\frac{N}{N-2s}$. Hence $w=u$ and then $u$, the unique solution to problem \eqref{eq:def}, is also given by
\begin{equation}\label{rep100}
u(x)=\io \mathcal{G}_s(x,y)f(y)dy.
\end{equation}

As a consequence of the   estimates in Lemma \ref{estimmm}, the authors in \cite{CV2} obtain the following regularity result.
\begin{Theorem}\label{key}
Suppose that $s\in (\frac 12, 1)$ and let $f\in \mathfrak{M}(\O)$, a Radon measure. Then the problem \eqref{eq:def} has a unique weak solution $u$ in the sense of
Definition \ref{def1}  such that,
\begin{enumerate}
\item  $|\n u|\in M^{p_*}(\O)$, the Marcinkiewicz space,  with $p_*=\frac{N}{N-2s+1}$ and as a consequence
 $u\in W^{1,\theta}_0(\Omega)$ for all $\theta<p_*$. Moreover
\begin{equation}\label{dd}
||u||_{W^{1,\theta}_0(\Omega)}\le C(N,q,\O)||f||_{\mathfrak{M}(\O)}.
\end{equation}
\item For $f\in L^1(\O)$, setting\; $T: L^1(\O)\to  W^{1,\theta}_0(\Omega)$, with $T(f)=u$, then $T$ is a compact operator.
\end{enumerate}
\end{Theorem}
\begin{remark}\label{rm01}
If $f\in L^1(\Omega,d^\beta)$ with $0\le \beta\le s$, then for $p\in(1, \dfrac{N}{N+\beta-2s})$ there exists $c_p>0$ such that
$$||u||_{W^{2s-\gamma,p}}\le ||f||_{L^1(\Omega,d^\beta)},$$
where $\gamma=\beta+\frac N{p'}$  if   $\beta>0$ and $\gamma>\frac N{p'}$ if $\beta=0$.
\end{remark}

We will use Theorems \ref{entropi} and \ref{key} as a starting point of our analysis.

To end this part, we recall  the classical Hardy-Littlewood-Sobolev inequality.
\begin{Lemma}\label{ineq2}
Let $0<\lambda<N$, $\theta,\, \gamma>1$ with  $\dfrac{1}{\theta}+\dfrac{1}{\gamma}+\dfrac{\l}{N}=2$. Assume  that $g\in L^\theta(\ren)$ and $k\in L^\gamma(\ren)$,
then
$$
\Big|\iint_{\ren\times \ren}\frac{g(x)k(y)}{|x-y|^\l}dxdy\Big|\le C(N,\l,\theta)||k||_{L^\gamma(\ren)}||g||_{L^\theta}.
$$
\end{Lemma}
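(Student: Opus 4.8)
The plan is to deduce this from the Riesz potential estimate of Lemma \ref{ineq1} by duality; there is essentially no obstacle, the whole argument being a one-step reduction. Writing $J_\lambda(k)(x)=\int_{\ren}\frac{k(y)}{|x-y|^\lambda}\,dy$ and letting $\theta'$ be the H\"older conjugate of $\theta$, the left hand side equals $\int_{\ren}g(x)\,J_\lambda(k)(x)\,dx$, so H\"older's inequality gives
$$
\Big|\iint_{\ren\times\ren}\frac{g(x)k(y)}{|x-y|^\lambda}\,dx\,dy\Big|\le ||g||_{L^\theta(\ren)}\,||J_\lambda(k)||_{L^{\theta'}(\ren)},
$$
and it then remains only to bound the last factor by Lemma \ref{ineq1}$(b)$.

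The main step is the exponent bookkeeping that allows Lemma \ref{ineq1}$(b)$ to be applied to $J_\lambda(k)$ with source exponent $p=\gamma$ and target exponent $q=\theta'$. First I would observe that the relation required there, $\frac1{\theta'}+1=\frac1\gamma+\frac\lambda N$, is exactly the hypothesis $\frac1\theta+\frac1\gamma+\frac\lambda N=2$ rewritten via $\frac1{\theta'}=1-\frac1\theta$. Next I would check the admissibility $1<\gamma<\theta'<\infty$: from $\lambda<N$ one gets $\frac1{\theta'}=\frac1\gamma+\frac\lambda N-1<\frac1\gamma$, hence $\theta'>\gamma>1$; from $\theta>1$ one gets $\frac1\gamma+\frac\lambda N>1$, hence $\frac1{\theta'}>0$, i.e. $\theta'<\infty$; and $\frac1\theta=2-\frac1\gamma-\frac\lambda N>0$ automatically, so no extra assumption $\theta<\infty$ is needed. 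Then Lemma \ref{ineq1}$(b)$ yields $||J_\lambda(k)||_{L^{\theta'}(\ren)}\le c_{\gamma,\theta'}||k||_{L^\gamma(\ren)}$, and since $\gamma$ is determined by $\theta,\lambda,N$ through the constraint, combining this with the H\"older bound produces a constant of the claimed form $C(N,\lambda,\theta)$.

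The only remaining technical point is to justify the use of Fubini's theorem. For this I would run the same estimate with $|g|$ and $|k|$: part $(a)$ of Lemma \ref{ineq1} gives that $J_\lambda(|k|)$ is finite a.e., and the $L^{\theta'}$ bound just proved gives $J_\lambda(|k|)\in L^{\theta'}(\ren)$, so that $\int_{\ren}|g(x)|\,J_\lambda(|k|)(x)\,dx\le ||g||_{L^\theta(\ren)}\,||J_\lambda(|k|)||_{L^{\theta'}(\ren)}<\infty$, and Tonelli's theorem legitimizes the interchange of integration. (Alternatively one could symmetrize the roles of $g$ and $k$, but this is not necessary.) I do not expect any genuine difficulty here: the substance is entirely contained in Lemma \ref{ineq1}, and only the exponent algebra together with the admissibility check $1<\gamma<\theta'<\infty$ require attention.
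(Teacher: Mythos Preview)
Your argument is correct. The paper does not actually give a proof of this lemma: it simply states ``For the proof we refer, for instance, to Section 4.3 in \cite{Lieb-Loss} or to the paper \cite{Lieb}.'' Your deduction from Lemma~\ref{ineq1} via H\"older duality is the standard one-line reduction, and your exponent bookkeeping and admissibility check $1<\gamma<\theta'<\infty$ are accurate (note the paper's statement of Lemma~\ref{ineq1} has a harmless typo, writing $l$ where $q$ is meant). So in effect you have supplied what the paper only cites, by the most natural route.
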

For the proof we refer, for instance, to  Section 4.3 in \cite{Lieb-Loss} or to the paper \cite{Lieb}.

\subsection{A technical result}
In what follows we will assume that $s\in (\frac 12,1)$.

In the local case, $s=1$, if $u$ is the solution to the corresponding problem \eqref{eq:def}, it is known that $T_k(u)\in \sob(\O)$ for all $k>0$. This follows
using $T_k(u)$ as a test function in \eqref{eq:def}. To prove a similar result in the fractional setting in the next subsection we need the following  result.
\begin{Lemma}\label{singular}
Let $\Omega$ be a bounded smooth domain. Assume $\dfrac 12<s<1$ and $\alpha\in \re$ verifying  $1<\a<2s$. Then the problem
\begin{equation}\label{rho}
\left\{
\begin{array}{rcll}
(-\Delta)^s \rho &= & \dfrac{1}{d^\a(x)} & \text{ in }\Omega , \\ \rho &=& 0 &\hbox{  in } \mathbb{R}^N\setminus\Omega,
\end{array}%
\right.
\end{equation}
has a positive solution $\rho$ such that $\rho\in L^\infty(\O)$ and $\rho^{\frac{\beta+1}{2}}\in  H^s_0(\Omega)$ for all $\beta>\max\{\frac{\a}{2s-\a}, 1\}$. In
particular $\rho^{\theta}\in  H^s_0(\Omega)$ for all $\theta>\max\{\frac{s}{2s-\a}, 1\}$.
\end{Lemma}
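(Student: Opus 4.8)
The plan is to build $\rho$ from the estimates on the Green function $\mathcal{G}_s$ in Lemma \ref{estimmm} together with the fact that $\frac{1}{d^\a}\in L^1(\Omega,d^s)$ when $\a<2s$ (indeed $\a-s<s<1$, so $d^{s-\a}$ is integrable near $\partial\Omega$). First I would set $\rho(x)=\int_\Omega \mathcal{G}_s(x,y)\,\frac{1}{d^\a(y)}\,dy$, which is the natural weak solution of \eqref{rho} in the sense of Definition \ref{def1}, and check it is well defined and positive. To get $\rho\in L^\infty(\Omega)$, I would use the first two bounds in \eqref{green1}: split the integral over $\{|x-y|\le d(x)/2\}$, where $d(y)\sim d(x)$ and one uses $\mathcal{G}_s\lesssim |x-y|^{2s-N}$, and over $\{|x-y|> d(x)/2\}$, where one uses $\mathcal{G}_s\lesssim d^s(y)|x-y|^{s-N}$ so that the integrand is $\lesssim d^{s-\a}(y)|x-y|^{s-N}$. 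In both regions the local integrability of $d^{s-\a}$ (using $\a-s<1$) and the fact that $s-N>-N$ give a bound independent of $x$; near $\partial\Omega$ the weight $d^s(x)$ coming from the alternative bound $\mathcal{G}_s\lesssim d^s(x)|x-y|^{s-N}$ makes the estimate uniform. So $\|\rho\|_{L^\infty(\Omega)}\le C$.

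Next, to show $\rho^\beta\in H^s_0(\Omega)$ for $\beta>\max\{\frac{s}{2s-\a},1\}$, I would first establish the boundary behavior $\rho(x)\le C\,d^{s}(x)$ is too strong in general; rather I expect $\rho(x)\sim d^{2s-\a}(x)$ near $\partial\Omega$ — from $\int_\Omega \mathcal{G}_s(x,y)d^{-\a}(y)\,dy$, the dominant contribution as $d(x)\to 0$ comes from $y$ near $x$, and using $\mathcal{G}_s(x,y)\lesssim \min\{d^s(x),d^s(y)\}|x-y|^{s-N}$ one obtains $\rho(x)\lesssim d(x)^{\min\{s,2s-\a\}}=d(x)^{2s-\a}$ since $\a>s$ would be needed for $2s-\a<s$, and indeed $1<\a$ already gives nothing, so more carefully: $2s-\a<s\iff \a>s$, which need not hold, but in the regime $\a\le s$ one has $2s-\a\ge s$ and then $\rho\lesssim d^s$, which is even better. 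In all cases $\rho(x)\lesssim d(x)^{\min\{s,2s-\a\}}$. Then $\rho^\beta\lesssim d^{\beta\min\{s,2s-\a\}}$, and the condition $\beta>\frac{s}{2s-\a}$ (together with $\beta>1$) guarantees $\beta(2s-\a)>s$, i.e. $\rho^\beta$ vanishes at the boundary faster than $d^s$. Combined with the interior regularity of $\rho$ (which is smooth away from $\partial\Omega$ by standard local regularity for $(-\Delta)^s$ with bounded right-hand side), the Gagliardo seminorm $\iint |\rho^\beta(x)-\rho^\beta(y)|^2|x-y|^{-N-2s}\,dx\,dy$ can be controlled: the interior part is finite since $\rho^\beta\in C^\infty_{loc}\cap L^\infty$, and the part involving the boundary strip is handled by a Hardy-type inequality, using that $w:=\rho^\beta$ satisfies $|w(x)|\lesssim d(x)^{\sigma}$ with $\sigma=\beta\min\{s,2s-\a\}>s$ and $|\nabla w(x)|\lesssim d(x)^{\sigma-1}$, so that $\int_\Omega |\nabla w|^2 d^{2s-1}<\infty$ and by the fractional Hardy inequality $w\in H^s_0(\Omega)$.

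The main obstacle I expect is the sharp boundary estimate for $\rho$ and the verification that $\rho^\beta\in H^s_0(\Omega)$ rather than merely $H^s(\mathbb{R}^N)$: one must pin down the exact exponent $2s-\a$ (or $s$, whichever is smaller) in $\rho(x)\lesssim d(x)^{\min\{s,2s-\a\}}$ and track it through the power $\beta$. The inequality $2s-\a>0$ (from $\a<2s$) is exactly what makes $\rho$ continuous up to the boundary and what allows a positive power $\beta$ large enough to push $\rho^\beta$ into the energy space; the role of $\beta>1$ is to keep the interior Gagliardo integral finite (so that $\nabla(\rho^\beta)=\beta\rho^{\beta-1}\nabla\rho$ does not blow up where $\rho$ could a priori be small in the interior, though $\rho>0$ in $\Omega$ makes this harmless) and, more importantly, to ensure $\rho^\beta$ is admissible as a test function later. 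A clean way to organize the boundary estimate is to use a barrier: since $(-\Delta)^s(d^{\,t})\lesssim d^{\,t-2s}$ for suitable $t\in(0,2s)$ near $\partial\Omega$, one compares $\rho$ with $c\,d^{\,2s-\a}$ after a cutoff, invoking the weak comparison principle from Theorem \ref{entropi}. I would also record, for later use in the next subsection, that $\rho$ so constructed is exactly the object needed to run $T_k(v)$-type test-function arguments in the fractional setting.
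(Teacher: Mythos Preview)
Your approach is genuinely different from the paper's and, while the strategy is reasonable, it has real gaps as written.

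\textbf{The paper's route.} The paper never looks at pointwise Green-function estimates here. Instead it regularizes the right-hand side to $\frac{1}{d^\alpha + 1/n}$, obtains an increasing sequence $\rho_n \in L^\infty\cap H^s_0$, and tests the equation for $\rho_n$ against $\rho_n^\beta$. Via the algebraic inequality $(a-b)(a^\gamma-b^\gamma)\ge c|a^{(1+\gamma)/2}-b^{(1+\gamma)/2}|^2$, this yields
\[
\|\rho_n^{(\beta+1)/2}\|_{H^s_0}^2 \le \int_\Omega \frac{\rho_n^\beta}{d^\alpha}\,dx,
\]
and the right side is controlled by H\"older together with the fractional Hardy inequality $\int_\Omega \phi^2/d^{2s}\le C\|\phi\|_{H^s_0}^2$, provided $\alpha<\frac{2s\beta}{\beta+1}$ (equivalently $\frac{\beta+1}{2}>\frac{s}{2s-\alpha}$). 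This closes to $\|\rho_n^{(\beta+1)/2}\|_{H^s_0}\le C$ and, after passing to the limit, gives the $H^s_0$ statement. Boundedness is then proved by a Stampacchia level-set argument, again using Hardy. No boundary rate for $\rho$ is ever needed.

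\textbf{On your argument.} First, a simplification you miss: since $1<\alpha$ and $s<1$, you always have $\alpha>s$, so $2s-\alpha<s$ and there is no case split --- the relevant boundary exponent is $2s-\alpha$, period. Second, the step ``$\int_\Omega |\nabla w|^2 d^{2s-1}<\infty$ and by the fractional Hardy inequality $w\in H^s_0$'' is backwards: Hardy controls $\int w^2/d^{2s}$ \emph{by} $\|w\|_{H^s_0}^2$, not the other way. What you actually want is that $\beta(2s-\alpha)>s>\tfrac12$ implies $|\nabla(\rho^\beta)|\lesssim d^{\beta(2s-\alpha)-1}\in L^2(\Omega)$, hence $\rho^\beta\in W^{1,2}_0(\Omega)\hookrightarrow H^s_0(\Omega)$; but this requires also the gradient bound $|\nabla\rho|\lesssim d^{2s-\alpha-1}$, which you assert without proof. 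Third, your two-zone splitting for $\rho\in L^\infty$ is more delicate than indicated: on $\{|x-y|>d(x)/2\}$ the bound $\mathcal{G}_s\lesssim d^s(y)|x-y|^{s-N}$ gives an integrand $d^{s-\alpha}(y)|x-y|^{s-N}$ whose integral over the boundary slab must be tracked carefully to avoid a spurious factor $d(x)^{s-\alpha}$ (which blows up). A finer decomposition according to both $d(y)$ and $|x-y|$, or a direct barrier comparison with $c\,d^{2s-\alpha}$, is needed.

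\textbf{Comparison.} Your potential-theoretic program, once the pointwise estimates $\rho\lesssim d^{2s-\alpha}$ and $|\nabla\rho|\lesssim d^{2s-\alpha-1}$ are rigorously established, would give the sharp boundary behavior of $\rho$ --- information the paper's energy method does not extract. The price is that you must carry out delicate Green-kernel computations and justify the membership criterion for $H^s_0$ correctly. The paper's variational approach trades that geometric precision for a short, self-contained argument that uses only the Hardy inequality and the Stampacchia lemma, and lands directly in the energy space.
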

\begin{proof} Recall that $\O$ is a regular domain, since $s>\dfrac 12$, the following Hardy type inequality was obtained in \cite{Loss},
\begin{equation}\label{hardy}
C\io \dfrac{\phi^2}{d^{2s}(x)}dx\le ||\phi||^2_{H^s_0(\O)} \mbox{ for all }\phi\in H^s_0(\O),
\end{equation}
where $C>0$ depend only on $s$, $N$ and $\Omega$.

Define $\rho_n$ to be the unique solution to the approximating problem
\begin{equation}\label{rhon}
\left\{
\begin{array}{rcll}
(-\Delta)^s \rho_n &= & \dfrac{1}{d^\a(x)+\frac 1n} & \text{ in }\Omega , \\ \rho_n &=& 0 &\hbox{  in } \mathbb{R}^N\setminus\Omega.
\end{array}%
\right.
\end{equation}
It is clear that $\rho_n\in L^\infty(\O)\cap H^s_0(\O)$, $\rho_n>0$ in $\O$ and $\rho_n\le \rho_{n+1}$ for all $n$.

Since $\a<2s$, we can pick-up $\beta>1$ such that $\a<\dfrac{2s\beta}{\beta+1}$.

Taking $\rho^\beta_n$ as a test function in \eqref{rhon} and by using the numerical inequality,
\begin{equation}\label{alge3}
(a-b)(a^{\g}-b^{\g})\ge c|a^{\frac{1+\g}{2}}-b^{\frac{1+\g}{2}}|^2 \mbox{  for all }a, b \in \re^+\mbox{   and }\g, c >0,
\end{equation}
(see \cite{Kas1}), it follows that
\begin{equation}\label{energy}
C||\rho^{\frac{\beta+1}{2}}_n||^2_{H^s_0(\O)}\le \io \dfrac{\rho^\beta_n}{d^\a(x)}dx.
\end{equation}
Thus using the H\"older inequality, it holds
$$
\io \dfrac{\rho^\beta_n}{d^\a(x)}dx\le \Big(\io \dfrac{\rho^{\beta+1}_n}{d^{2s}(x)}dx\Big)^{\frac{\b}{\beta+1}}\Big(\io \dfrac{1}{d^{(\beta+1)(\a-\frac{2s
\beta}{\beta+1})}(x)}dx\Big)^{\frac{1}{\beta+1}}.$$ Notice that, since $\a<\dfrac{2s\beta}{\beta+1}$, then $(\beta+1)(\a-\dfrac{2s \beta}{\beta+1})<0$,
$$\io \dfrac{1}{d^{(\beta+1)(\a-\frac{2s \beta}{\beta+1})}(x)}dx<\infty.$$

Therefore, by Hardy inequality \eqref{hardy},
\begin{equation}\label{HH}
\io \dfrac{\rho^\beta_n}{d^\a(x)}dx\le C\Big(\io \dfrac{\rho^{\beta+1}_n}{d^{2s}(x)}dx\Big)^{\frac{\b}{\beta+1}}\le
C||\rho^{\frac{\beta+1}{2}}_n||^{\frac{\beta}{\beta+1}}_{H^s_0(\O)}.
\end{equation}
Hence, from \eqref{energy} and \eqref{HH}, we have $||\rho^{\frac{\beta+1}{2}}_n||_{H^s_0(\O)}\le C$ for all $n\in \mathbb{N}$. Then,  there exists $\rho$ such
that $\rho^{\frac{\beta+1}{2}}\in H^s_0(\O)$, $\rho_n\uparrow \rho$ strongly in $L^{\frac{2^*_s(\beta+1)}{2}}(\O)$ and $\rho^{\frac{\beta+1}{2}}_n\rightharpoonup
\rho^{\frac{\beta+1}{2}}$ weakly in $H^s_0(\O)$.

It is not difficult to show that $\rho$ is a solution to problem \eqref{rho} in the sense of distribution and that $\dfrac{\rho^{\beta+1}_n}{d^\a(x)}\to
\dfrac{\rho^{\beta+1}}{d^\a(x)}$ strongly in $L^1(\O)$. In the same way we can show that $\rho\in L^\s(\O)$ for all $\s>0$.

To prove that $\rho\in L^\infty(\O)$, we  follows \cite{LPPS} where  the classical Stampacchia argument  in \cite{St} is adapted to the fractional setting. For
the reader convenience we give some details, mainly the estimates involving the Hardy inequality .

Take $G^\beta_{k}(\rho_n)$, with $k>0$, as a test function in \eqref{rhon}. Notice that by using \eqref{alge3}, we have
$$
\begin{array}{c}
(\rho_n(x)-\rho_n(y))\bigg(G^\beta_k (\rho_n(x))- G^\beta_k(\rho_n(y))\bigg)\ge C \bigg|G^{\frac{\beta+1}{2}}_k (\rho_n(x))-
G^{\frac{\beta+1}{2}}_k(\rho_n(y))\bigg|^{2},
\end{array}
$$
then,
$$
\begin{array}{lll}
C \dyle \int\int_{D_\O}\frac{|G^{\frac{\beta+1}{2}}_{k}(\rho_n(x))-G^{\frac{\b+1}{2}}_{k}(\rho_n(y))|^2}{|x-y|^{N+2s}}\, dxdy & \leq & \dyle
\int_{\Omega}\dfrac{G^\beta_{k}(\rho_n(x))}{d^\a(x)}dx \\ & \le & \dyle  \Big(\io \dfrac{G^{\beta+1}_k(\rho_n(x))}{d^{2s}(x)}dx\Big)^{\frac{\b}{\beta+1}}
\Big(\int_{A_k}d^{(\beta+1)(\frac{2s\beta}{\beta+1}-\a)}(x)dx\Big)^{\frac{1}{\beta+1}}.
\end{array}
$$
Letting $n\to \infty$  it follows that
$$
\int\int_{D_\O}\frac{|G^{\frac{\beta+1}{2}}_{k}(\rho(x))-G^{\frac{\b+1}{2}}_{k}(\rho(y))|^2}{|x-y|^{N+2s}}\, dxdy\leq \Big(\io
\dfrac{G^{\beta+1}_k(\rho(x))}{d^{2s}(x)}dx\Big)^{\frac{\b}{\beta+1}} |A_{k}|^{\frac{1}{\beta+1}},
$$
where $A_{k} =\{ x \in \Omega\,: \, |u(x)|  \geq k\}$. Hence by Hardy inequality,
$$
||G^{\frac{\beta+1}{2}}_k(\rho)||^{\frac{1}{\beta+1}}_{H^s_0(\O)}\le C|A_k|^{\frac{1}{\beta+1}},
$$
and by using the Sobolev Inequality, there results that
$$
S|| G^{\frac{\beta+1}{2}}_k(\rho)||_{L^{2^*_s} (\Omega)}\leq  C |A_k|.
$$

Let $h>k$, since $A_h\subset A_k$,  we obtain that
$$
(h-k)^{\frac{\beta+1}{2}} |A_h|^{\frac{1}{2^*_s}}\leq C|A_k|,
$$
that is,
$$
|A_h|\leq \frac{C |A_k|^{2^*_s} }{(h-k)^{\frac{2^*_s(\beta+1)}{2}}}\,.
$$
Since $2^*_s>1$, by the classical numerical lemma of Stampacchia in \cite{St}, there exists $k_0>0$ such that $|A_h|=0$ for all $h\ge k_0$. Thus $\rho$ is
bounded.
\end{proof}
\subsection{Some regularity results.}
We start  by proving  the following regularity result of $T_k(v)$ that will be a useful tool in the subsequent arguments.
\begin{Theorem}\label{key01}
Assume that $f\in L^1(\Omega)$ and define $v$ to be the unique weak solution to problem \eqref{eq:def}, then $T_k(v)\in W^{1,\a}_0(\Omega)\cap H^s_0(\Omega)$  for
any $1<\a<2s$, moreover $$ \io |\n T_k(v)|^{\a}\, dx\le Ck^{\a-1}||f||_{L^1(\O)}.
$$
\end{Theorem}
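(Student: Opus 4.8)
The plan is to obtain the result by approximation, combining the Green function bounds of Lemma~\ref{estimmm}, the representation formula, and the auxiliary function $\rho$ constructed in Lemma~\ref{singular}. Set $f_n=T_n(f)\in L^\infty(\Omega)$ and let $v_n\in H^s_0(\Omega)\cap L^\infty(\Omega)$ be the solution of $(-\Delta)^s v_n=f_n$ in $\Omega$, $v_n=0$ outside; by Theorem~\ref{entropi} we have $v_n\to v$ a.e. and $T_k(v_n)\to T_k(v)$ strongly in $H^s_0(\Omega)$. Taking $T_k(v_n)$ as a test function for $v_n$ and using the elementary inequality $(a-b)(T_k(a)-T_k(b))\ge (T_k(a)-T_k(b))^2$ inside the resulting double integral, I would get
\[
\|T_k(v_n)\|^2_{H^s_0(\Omega)}\le \int_\Omega f_n\,T_k(v_n)\,dx\le k\,\|f\|_{L^1(\Omega)},
\]
and passing to the limit this yields $T_k(v)\in H^s_0(\Omega)$ with the same bound. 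It remains to prove a $W^{1,\alpha}_0$ estimate for $T_k(v_n)$, uniform in $n$.

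To that end I would use $v_n(x)=\int_\Omega \mathcal{G}_s(x,y)f_n(y)\,dy$ together with $|\nabla_x\mathcal{G}_s(x,y)|\le C\,\mathcal{G}_s(x,y)\big(\frac1{|x-y|}+\frac1{d(x)}\big)$ and $\mathcal{G}_s(x,y)\le C|x-y|^{2s-N}$ from Lemma~\ref{estimmm}, which give the pointwise estimate
\[
|\nabla v_n(x)|\le C\,J_{N-2s+1}(f_n)(x)+C\,\frac{v_n(x)}{d(x)},
\]
with $J_{N-2s+1}$ as in Lemma~\ref{ineq1}. Since $v_n>0$, one has $\nabla T_k(v_n)=\nabla v_n\,\chi_{\{v_n<k\}}$, so it suffices to bound $\int_{\{v_n<k\}}(v_n/d)^\alpha\,dx$ and $\int_{\{v_n<k\}}J_{N-2s+1}(f_n)^\alpha\,dx$ by $C\,k^{\alpha-1}\|f\|_{L^1(\Omega)}$.

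For the first integral, which is where Lemma~\ref{singular} enters, on $\{v_n<k\}$ we have $v_n^\alpha=v_n^{\alpha-1}v_n\le k^{\alpha-1}v_n$, and by Fubini and the symmetry of the Green kernel,
\[
\int_\Omega \frac{v_n(x)}{d^\alpha(x)}\,dx=\int_\Omega f_n(y)\Big(\int_\Omega \frac{\mathcal{G}_s(x,y)}{d^\alpha(x)}\,dx\Big)\,dy=\int_\Omega f_n(y)\,\rho(y)\,dy\le \|\rho\|_{L^\infty(\Omega)}\|f\|_{L^1(\Omega)},
\]
because $\int_\Omega \mathcal{G}_s(\cdot,y)\,d^{-\alpha}(y)\,dy=\rho$ is exactly the bounded solution of \eqref{rho} provided by Lemma~\ref{singular}; hence $\int_{\{v_n<k\}}(v_n/d)^\alpha\le C\,k^{\alpha-1}\|f\|_{L^1(\Omega)}$. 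The second integral is the main obstacle: since $\alpha$ may exceed $p_*=\tfrac{N}{N-2s+1}$, the potential $J_{N-2s+1}(f_n)$ is not uniformly bounded in $L^\alpha(\Omega)$ and the truncation has to be exploited. Here I would combine the weak-type estimate $|\{J_{N-2s+1}(f_n)>t\}|\le C(\|f_n\|_{L^1}/t)^{p_*}$ of Lemma~\ref{ineq1}$(c)$ with the fact, coming from the lower Green bound $\mathcal{G}_s(x,y)\ge c|x-y|^{2s-N}$ near the diagonal together with a dyadic splitting of the mass of $f_n$ around $x$, that large values of this order-$(2s-1)$ Riesz potential are tied to large values of $v_n=\mathcal{G}_s*f_n$; this confines the relevant part of the distribution function of $J_{N-2s+1}(f_n)$ over $\{v_n<k\}$ to scales $t\le C\,k^{(N-2s+1)/(N-2s)}$, after which an elementary integration gives the bound $C\,k^{\alpha-1}\|f\|_{L^1(\Omega)}$ — the condition $\alpha<2s$ being precisely what makes the resulting exponent of $k$ not exceed $\alpha-1$. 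I expect this step, and especially its behaviour near $\partial\Omega$, to be the technically heaviest part.

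Finally I would pass to the limit: the uniform bound $\int_\Omega|\nabla T_k(v_n)|^\alpha\,dx\le C\,k^{\alpha-1}\|f\|_{L^1(\Omega)}$ with $\alpha>1$ gives weak compactness of $\{\nabla T_k(v_n)\}$ in $L^\alpha(\Omega)$; since $T_k(v_n)\to T_k(v)$ strongly in $H^s_0(\Omega)$, hence a.e. and in $L^1(\Omega)$, the weak limit is $\nabla T_k(v)$, so $T_k(v)\in W^{1,\alpha}_0(\Omega)$ and the estimate survives by weak lower semicontinuity of the $L^\alpha$ norm.
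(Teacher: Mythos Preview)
Your approximation layer and the treatment of the boundary term $\int_{\{v_n<k\}}(v_n/d)^\alpha$ via Lemma~\ref{singular} are fine, and they match what the paper does. The gap is in the other term: you split the gradient bound as
\[
|\nabla v_n(x)|\le C\,J_{N-2s+1}(f_n)(x)+C\,\frac{v_n(x)}{d(x)}
\]
\emph{before} raising to the power $\alpha$, and are then left with $\int_{\{v_n<k\}} J_{N-2s+1}(f_n)^{\alpha}\,dx$. Your proposed mechanism --- that large values of the order-$(2s-1)$ Riesz potential force large values of $v_n$, hence cannot occur on $\{v_n<k\}$ --- is not true pointwise (a small mass concentrated in a tiny ball near $x$ makes $J_{N-2s+1}(f_n)(x)$ arbitrarily large while keeping $v_n(x)$ bounded), and turning your distribution-function sketch into an honest estimate would require nontrivial harmonic-analysis machinery that you have not supplied. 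As you yourself suspected, this is a genuine hole.

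The paper sidesteps this difficulty entirely with a one-line trick. Writing $|\nabla v(x)|\le\int_\Omega h(x,y)\,\mathcal{G}_s(x,y)f(y)\,dy$ with $h=|\nabla_x\mathcal{G}_s|/\mathcal{G}_s$, one applies H\"older's inequality \emph{with respect to the measure $\mathcal{G}_s(x,y)f(y)\,dy$} to get
\[
|\nabla v(x)|^{\alpha}\le\Big(\int_\Omega h(x,y)^{\alpha}\,\mathcal{G}_s(x,y)f(y)\,dy\Big)\Big(\int_\Omega \mathcal{G}_s(x,y)f(y)\,dy\Big)^{\alpha-1}
=\Big(\int_\Omega h^{\alpha}\,\mathcal{G}_s f\,dy\Big)\,v(x)^{\alpha-1}.
\]
On $\{v<k\}$ the factor $v(x)^{\alpha-1}$ is bounded by $k^{\alpha-1}$, and \emph{only then} does one split $h^{\alpha}\le C\big(|x-y|^{-\alpha}+d(x)^{-\alpha}\big)$ and Fubini in $x$. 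The resulting inner integrals $\int_\Omega |x-y|^{-\alpha}\mathcal{G}_s(x,y)\,dx\le C\int_\Omega|x-y|^{2s-N-\alpha}\,dx$ and $\int_\Omega d(x)^{-\alpha}\mathcal{G}_s(x,y)\,dx=\rho(y)$ are uniformly bounded in $y$ precisely because $\alpha<2s$, and the estimate follows immediately. The point is that the truncation has to be exploited through the factor $v^{\alpha-1}$ extracted by H\"older, not after the kernel has already been split; your early splitting destroys exactly the structure that makes the argument elementary.
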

\begin{proof} Without loss of generality, we can assume that $f\ge 0$.  By Theorem \ref{entropi}, $T_k(v)\in H^s_0(\Omega)$.
Let $\mathcal{G}_s$ be the Green kernel of $(-\Delta)^s$, then
\begin{equation}\label{repre}
v(x)=\io \mathcal{G}_s(x,y) f(y)dy.
\end{equation}
Thus
$$
|\n T_k(v)|=|\n v(x)|\chi_{\{|u(x)|<k\}} \le\Big(\io |\n_x \mathcal{G}_s(x,y)|f(y)dy\Big)\chi_{\{|v(x)|<k\}}.
$$
Fix $1<\a<2s$, then using H\"older inequality, it follows that
\begin{eqnarray*}
|\n T_k(v)|^{\a} &\le & \Big(\io |\n_x \mathcal{G}_s(x,y)|f(y)dy\Big)^{\a}\chi_{\{|v(x)|<k\}} \\ & \le & \dyle \Big(\io \dfrac{|\n_x
\mathcal{G}_s(x,y)|}{\mathcal{G}_s(x,y)} \mathcal{G}_s(x,y)f(y)dy\Big)^{\a}\chi_{\{|v(x)|<k\}} \\& \le & \dyle \Big(\io \bigg[\dfrac{|\n_x
\mathcal{G}_s(x,y)|}{\mathcal{G}_s(x,y)}\bigg]^{\a} \mathcal{G}_s(x,y)f(y)dy\Big)\Big(\io \mathcal{G}_s(x,y)f(y)dy\Big)^{\a-1}\chi_{\{|v(x)|<k\}} \\ &\le & \dyle
\Big(\io \bigg[\dfrac{|\n_x \mathcal{G}_s(x,y)|}{\mathcal{G}_s(x,y)}\bigg]^{\a} \mathcal{G}_s(x,y)f(y)dy\Big)v^{\a-1}(x)\chi_{\{|v(x)|<k\}}\\ &\le & \dyle
k^{\a-1}\Big(\io \bigg[\dfrac{|\n_x \mathcal{G}_s(x,y)|}{\mathcal{G}_s(x,y)}\bigg]^{\a} \mathcal{G}_s(x,y)f(y)dy\Big)\chi_{\{|v(x)|<k\}}.
\end{eqnarray*}
From Lemma \ref{estimmm}, we know that
$$
\mathcal{G}_s(x,y)\le C_1\min\bigg\{\frac{1}{|x-y|^{N-2s}}, \frac{d^s(x)}{|x-y|^{N-s}}, \frac{d^s(y)}{|x-y|^{N-s}}\bigg\},
$$
and
$$
|\n_x \mathcal{G}_s(x,y)|\le C_2 \mathcal{G}_s(x,y)\max\bigg\{\frac{1}{|x-y|}, \frac{1}{d(x)}\bigg\}.
$$
Thus, setting $h(x,y)=\dfrac{|\n_x \mathcal{G}_s(x,y)|}{\mathcal{G}_s(x,y)}$, we reach that
\begin{eqnarray*}
\io |\n T_k(v)|^{\alpha}dx &\le & \dyle k^{\alpha-1}\io f(y) \Big(\io h(x,y)^\a\mathcal{G}_s(x,y) dx\Big)dy.
\end{eqnarray*}
It is clear that $h(x,y)^\alpha\le C\Big(\frac{1}{|x-y|^{\a}}+\frac{1}{d^{\a}(x)}\Big)$.
 Therefore, we conclude that
\begin{eqnarray*}
\io |\n T_k(v)|^{\a}dx &\le & \dyle C k^{\frac{\a}{\a'}}\Big\{\io f(y)\bigg(\io\frac{1}{|x-y|^{\a}}\mathcal{G}_s(x,y) dx\bigg)dy +\io f(y)\bigg(\io
\frac{1}{d^{\a}(x)}\mathcal{G}_s(x,y) dx\bigg)dy\Big\}\\ &\le & C k^{\alpha-1}(I_1+I_2).
\end{eqnarray*}
Respect to $I_1$, using the fact that $\mathcal{G}_s(x,y)\le \frac{C}{|x-y|^{N-2s}}$, it holds
$$
I_1\le C\io f(y)\io \bigg(\frac{1}{|x-y|^{\a}}\mathcal{G}_s(x,y) dx\bigg)dy\le C\io f(y)\bigg(\io\frac{1}{|x-y|^{N-2s+\a}}dx\bigg)dy.
$$
Since $\a<2s$, then, for $R>>2\text{diam}\,(\O)$,
$$\io\frac{1}{|x-y|^{N-2s+\a}}dx\le \int_{B_R(y)}\frac{1}{|x-y|^{N-2s+\a}}dx\le CR^{2s-\a}. $$
We deal now with $I_2$. Consider $\rho$, the unique solution to problem
\begin{equation*}\label{rho1}
\left\{
\begin{array}{rcll}
(-\Delta)^s \rho &= & \dfrac{1}{d^\a(x)} & \text{ in }\Omega , \\ \rho &=& 0 &\hbox{  in } \mathbb{R}^N\setminus\Omega,
\end{array}%
\right.
\end{equation*}
obtained in Lemma \ref{singular}. Since $\a<2s$,  by Lemma \ref{singular}, $\rho\in L^\infty(\O)$ .  Thus
$$
I_2\le C\io f(y)\bigg(\io \frac{1}{d^\a}\mathcal{G}_s(x,y) dx\bigg)dy=C\io f(y)\rho(y)dy\le C||f||_{L^1(\O)}.
$$
Combining the above estimates, it follows that
$$
\io |\n T_k(v)|^{\a}dx\le \dyle C k^{\a-1}||f||_{L^1(\O)}.
$$
Hence we conclude. \end{proof}

We next precise some results in order to find  the regularity of $v$ when $f$ is assumed to be more regular.

{
\begin{Lemma}\label{key2-corr}
Suppose that $f\in L^m(\O)$ with $m\ge 1$ and define $v$ to be the unique solution to problem
\begin{equation}\label{gener1-corr}
\left\{
\begin{array}{rcll}
(-\Delta)^s v &= & f & \mbox{ in }\Omega , \\ v &=& 0 &\hbox{  in } \mathbb{R}^N\setminus\Omega,
\end{array}%
\right.
\end{equation}
with $s>\frac12$. Then for all $p<\frac{m N}{N-m(2s-1)}$, there exists a positive constant $C\equiv C(\O, N, s,p)$ such that
\begin{equation}\label{dd11-corr}
\bigg\||\n v| d^{1-s}\bigg\|_{L^p(\Omega)}\le C||f||_{L^m(\O)}.
\end{equation}
Moreover,
\begin{enumerate}
\item If $m=\frac{N}{2s-1}$, then $|\n v|d^{1-s}\in L^p(\Omega)$ for all $p<\infty$. \item If $m>\frac{N}{2s-1}$, then $v\in \mathcal{C}^{1, \sigma}(\Omega)$
    for some $\sigma\in (0,1)$. and
$$
\bigg\| |\n v|d^{1-s}\bigg\|_{L^\infty(\O)}\le C||f||_{L^m(\O)}.
$$
\end{enumerate}
\end{Lemma}}

{
\begin{proof}
Without loss of generality we can assume that $f\ge 0$. Since $m\ge 1$, then we know that $|\n v|\in M^{p_*}(\O)$ with $p_*=\frac{N}{N-2s+1}$ and
$$
||\n v||_{L^{\theta}(\Omega)}\le C(N,q,\O)||f||_{L^1(\O)},
$$
for all $\theta<p_*$. It is clear that
$$
v(x)=\io \mathcal{G}_s(x,y)f(y)dy.
$$
Thus
$$
\n v(x)=\io \n_x \mathcal{G}_s(x,y)f(y)dy,
$$
then by \eqref{greenm01}, it holds that
$$
|\n v(x)|\le \io |\n_x \mathcal{G}_s(x,y)|f(y)dy\le \frac{C}{d^{1-s}(x)}\io \frac{f(y)}{|x-y|^{N-2s+1}}dy.
$$
Define
$$
g(x)=\io \frac{\tilde{f}(y)}{|x-y|^{N-2s+1}}dy
$$
where
\begin{equation}\label{ff0cor}
\tilde{f}(x)= \left\{
\begin{array}{rcll}
f(x) & \mbox{  if } & x\in \Omega , \\  0 &\mbox{  if } &  x\in \mathbb{R}^N\setminus\Omega.
\end{array} \right.
\end{equation}
Hence
$$
|\n v|d^{1-s}\le g\mbox{  in   }\O.
$$
Since $\tilde{f}\in L^m(\ren)$, then by Lemma \ref{ineq1}, $g\in L^{\frac{m N}{N-m(2s-1)}}(\ren)$ if $m<\frac{N}{2s-1}$ and in particular $g\in L^p(\O)$ for all
$p\le \frac{m N}{N-m(2s-1)}$. Hence we conclude.

If $m=\frac{N}{2s-1}$, then repeating the above argument with $m_1<m$, it holds that $g\in L^p(\O)$ for all $p<\infty$.

Assume that $m>\frac{N}{2s-1}$, then using H\"older inequality,
$$
\io \frac{f(y)}{|x-y|^{N-2s+1}}dy\le C(\O)||f||_{L^m(\O)} \mbox{  for all }x\in \O.
$$
Thus
$$
\bigg\| |\n v|d^{1-s}\bigg\|_{L^\infty(\O)}\le C||f||_{L^m(\O)}.
$$
Recall that
$$
\n v(x)=\io \n_x \mathcal{G}_s(x,y)f(y)dy,
$$
then by the dominated convergence Theorem and Vital's lemma and since $m>\frac{N}{2s-1}$, then $\n v$ is continuous in $\O$.

To shaw that $|\n v|$ is H\"older continuous we will prove that $|\n v|\in W^{2s-\g-1, l}(\Omega)$ where $0<\gamma<2s-1$ and $(2s-\gamma-1)l>N$. We proceed with
similar duality arguments as in \cite{CV1}. Let  $\phi\in \mathcal{C}^\infty_0(\Omega)$, then
$$
|\io \phi (-\Delta)^s v dx|=|\io f\phi dx|\le ||f||_{L^m(\O)}||\phi||_{L^{m'}(\O)}.
$$
Setting $l=\dfrac{Nm}{N-\gamma m}$ with $\g\in [0,1]$ is such that $\gamma<\min\{(2s-1), \frac{N}{m}\}$, then $m'=\dfrac{l'N}{N-l'\gamma}$. It is clear that
$\frac{N}{2s-1}<m<l$.

Now, using the definition of $q$ and by Sobolev inequality, it follows that
$$
|\io \phi (-\Delta)^s v dx|\le ||f||_{L^m(\O)}||\phi||_{L^{\frac{l'N}{N-l'\gamma}}(\O)}\le C||f||_{L^m(\O)}||\phi||_{W^{\gamma,l'}_0(\Omega)}
$$
Thus $(-\Delta)^s v\in W^{-\gamma, l}(\Omega)$ and
$$
||(-\Delta)^s v||_{W^{-\gamma, l}(\Omega)}\le C||f||_{L^m(\O)}.
$$
Recalling that $(-\Delta)^s$ realize an isomorphism between $W^{2s-\gamma, l}(\Omega)$ and $W^{-\gamma, l}(\Omega)$, hence we conclude that $v\in W^{2s-\gamma,
l}(\Omega)$ and
$$
||v||_{W^{2s-\gamma, l}(\Omega)}\le C||f||_{L^m}.
$$
Since $2s-\gamma>1$, then $|\n v|\in W^{2s-\gamma-1, l}(\Omega)$. Now, using the definition of $l$, we reach easily that $(2s-\gamma-1)l>N$. Thus by the
fractional Morrey inequality,   $|\n v|\in \mathcal{C}^{0, \sigma}(\Omega)$ with $\sigma=\frac{N}{(2s-\g-1)l}$
\end{proof}
}

{
\begin{remarks}\label{loc-reg}
Notice that, form Lemma \ref{estimmm}, we get
$$
v(x)\le C\, d^s(x)\io \frac{f(y)}{|x-y|^{N-s}}dy.
$$
Therefore, by Lemma \ref{ineq1}, we obtain that $\dfrac{v}{d^s}\in L^{\frac{mN}{N-ms}}(\O)$ and
$$
||\frac{v}{d^s}||_{L^{\frac{mN}{N-ms}}(\O)}\le C(\O,s,N,m)||f||_{L^m(\O)}.
$$
Since
$$
|\n (v d^{1-s})|=|d^{1-s}\n v+(1-s)\frac{v}{d^s}\n d|\le d^{1-s}|\n v|+(1-s)\frac{v}{d^s}|\n d|,
$$
then using the fact that $|\n d|=1$\, a.e in $\O$, then for all $p\le \frac{mN}{N-m(2s-1)}$, we get
$$
|\n (v d^{1-s})|^p\le C_1 d^{(1-s)p}|\n v|^p+C_2(\frac{v}{d^s})^p.
$$
Hence
$$
\io |\n (v d^{1-s})|^p dx \le C_1\io d^{(1-s)p}|\n v|^p dx +C_2\io (\frac{v}{d^s})^p dx.
$$
It is clear that $\frac{mN}{N-m(2s-1)}<\frac{mN}{N-ms}$, thus using H\"older inequality, it follows that
$$
\bigg(\io |\n (v d^{1-s})|^p dx\bigg)^{\frac{1}{p}}:=||v d^{1-s}||_{W^{1,p}_0(\O)}\le C||f||_{L^m(\O)}.
$$
\end{remarks}
}

{
In the case where $f\in L^1(\O)\cap L^m_{loc}(\O)$ where $m>1$, then the above regularity results hold locally in $\O$. More precisely we have
\begin{Lemma}\label{key2-locc}
Assume that $f\in L^1(\O)\cap L^m_{loc}(\O)$ with $m>1$. Let $v$ the unique solution to problem \eqref{gener1-corr}, then for any $\O_1\subset\subset\O$ and for
all $p\le \frac{m N}{N-m(2s-1)}$, there exists $C:=C(\O,\O_1,p)$ such that
\begin{equation}\label{dd11loc}
||\n v||_{L^p(\Omega_1)}\le C(||f||_{L^1(\O)}+||f||_{L^m(\O_1)}).
\end{equation}
Moreover,
\begin{enumerate}
\item If $m=\frac{N}{2s-1}$, then $|\n v|\in L^p_{loc}(\Omega)$ for all $p<\infty$. \item If $m>\frac{N}{2s-1}$, then $v\in \mathcal{C}^{1, \sigma}(\Omega)$
    for some $\sigma\in (0,1)$.
\end{enumerate}
\end{Lemma}
\begin{proof}
As in the proof of the previous Lemma, and since $f\in L^1(\O)$, then $|\n v|\in M^{p_*}(\O)$ with $p_*=\frac{N}{N-2s+1}$,
$$
||\n v||_{L^{\theta}(\Omega)}\le C(N,q,\O)||f||_{L^1(\O)},
$$
for all $\theta<p_*$. Since $\O_1\subset\subset\O$, then $\text{dist}(\O_1,\p\O)\ge C_1>0$. Consider the set
$$
\O_2=\{y\in \O: \text{dist}(y,\p\O)\le \frac{C_1}{2}\},
$$
then for $x\in \O_1$, we have
$$
\n v(x)=\io \n_x \mathcal{G}_s(x,y)f(y)dy.
$$
Thus, using the fact that  $\text{dist}(x,y)\ge \frac{C_1}{2}$ for all $x\in \O_1$ and $y\in \O_2$, it holds that
\begin{eqnarray*}
|\n v(x)| &\le & \int_{\O_2} |\n_x \mathcal{G}_s(x,y)|f(y)dy + \int_{\O\backslash \O_2} |\n_x \mathcal{G}_s(x,y)|f(y)dy \\ &\le & \frac{C}{d^{1-s}(x)}\int_{\O_2}
\frac{f(y)}{|x-y|^{N-2s+1}}dy + \int_{\O\backslash \O_2} |\n_x \mathcal{G}_s(x,y)|f(y)dy\\ &\le & C||f||_{L^1(\O)}+ \int_{K} |\n_x \mathcal{G}_s(x,y)|f(y)dy\\
&\le & C||f||_{L^1(\O)}+ \int_{K} \frac{f(y)}{|x-y|^{N-2s+1}}dy\\
\end{eqnarray*}
where $K$ is a compact set of $\O$ such that $\O\backslash \O_2\subset\subset K$. Since $f\in L^m(K)$, then using Lemma
\ref{ineq1}, we get the desired estimates.
\end{proof}
}
{
\begin{remarks}
The result of Lemma \ref{key2-locc} will be useful in order to get $\mathcal{C}^1$ regularity using bootstrapping argument if we have global bounds in $L^1$ and local family of bounds in a suitable $L^m_{loc}$.
\end{remarks}
}
\section{Comparison principle and applications}\label{sec3}
In this section we will prove a comparison principle that extend the one proved in \cite{APi} in the local case. More precisely, we will prove the following
result.
\begin{Theorem}\label{compa2}(Comparison Principle).
Let $g\in L^1(\O)$ be a nonnegative function. Assume that for all $\xi_1,\xi_2\in \mathbb{R}^N$,
$$H:\Omega\times \mathbb{R}^N\rightarrow\mathbb{R}^+ \hbox{  verifies   } |H(x,\xi_1)-H(x,\xi_2)|\le C b(x)|\xi_1-\xi_2|$$
where $b\in L^\s(\O)$ for some $\s>\frac{N}{2s-1}$. Consider  $w_1, w_2$ two positive functions such that $w_1, w_2\in W^{1,p}(\O)$ for all $p<p_*$, $(-\Delta)^s
w_1, (-\Delta)^s  w_2\in L^1(\O)$,  $w_1\le w_2$ in $\ren \setminus\Omega$ and
\begin{equation}\label{eq:compar1} \left\{
\begin{array}{rcll}
(-\Delta)^s  w_1 &\le &  H(x,\nabla w_1)+g & \hbox{ in  }\Omega,\\ (-\Delta)^s w_2 & \ge & H(x,\nabla w_2)+g & \hbox{ in  }\Omega.
\end{array} \right.
\end{equation}
Then, $w_2\ge w_1$ in $\Omega$.
\end{Theorem}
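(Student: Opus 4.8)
The plan is to reduce the statement to a maximum principle for the fractional Laplacian perturbed by a \emph{linear} first order drift with coefficient in $L^\sigma$, $\sigma>\frac{N}{2s-1}$, and then to kill the positive part of $w_1-w_2$ by means of the strong maximum principle attached to the Harnack type inequality for such an operator, exploiting that the exterior Dirichlet value produces a genuine contradiction. First, set $w:=w_1-w_2$. Subtracting the two inequalities in \eqref{eq:compar1} and using $|H(x,\xi_1)-H(x,\xi_2)|\le Cb(x)|\xi_1-\xi_2|$ one gets
\[
(-\Delta)^s w \;\le\; H(x,\nabla w_1)-H(x,\nabla w_2)\;\le\; C\,b(x)\,|\nabla w| \qquad\text{in }\Omega,\qquad w\le 0 \ \text{ in }\mathbb{R}^N\setminus\Omega .
\]
Introducing $\vec c(x):=C\,b(x)\,\nabla w(x)/|\nabla w(x)|$ on $\{\nabla w\neq 0\}$ and $\vec c\equiv 0$ elsewhere, we have $|\vec c|\le C\,b\in L^\sigma(\Omega)$ and $w$ is a weak subsolution of $\mathcal{L}w:=(-\Delta)^s w-\vec c(x)\cdot\nabla w\le 0$ in $\Omega$. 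By the Kato inequality for $(-\Delta)^s$, the positive part satisfies $(-\Delta)^s w^+\le \chi_{\{w>0\}}(-\Delta)^s w$ in the sense of distributions, and since $\nabla w^+=\chi_{\{w>0\}}\nabla w$ a.e. it follows that $w^+$ is a \emph{nonnegative} subsolution of $\mathcal{L}$ with $w^+=0$ in $\mathbb{R}^N\setminus\Omega$. It therefore suffices to prove $w^+\equiv 0$.

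Next I would upgrade the regularity of $w^+$. The coefficient lies in the \emph{subcritical} space: $\tfrac1\sigma<\tfrac{2s-1}{N}=1-\tfrac1{p_*}$, so, since $w^+\in W^{1,p}_0(\Omega)$ for all $p<p_*$, H\"older's inequality puts $C\,b\,|\nabla w^+|$ in $L^m(\Omega)$ for some $m>1$. Comparing $w^+$ with the solution of $(-\Delta)^s z= C\,b\,|\nabla w^+|$, $z=0$ in $\mathbb{R}^N\setminus\Omega$, and using the gradient estimate of Lemma \ref{estimmm} together with Lemma \ref{key2} and a finite iteration — each step gaining the fixed amount $\delta:=\tfrac{2s-1}{N}-\tfrac1\sigma>0$ of integrability — one obtains $w^+\in L^\infty(\Omega)$, continuous on $\mathbb{R}^N$, with $w^+=0$ on $\partial\Omega$. (This is the technical part; see the remark on the obstacle below.)

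With this in hand, assume for contradiction that $M:=\max_{\mathbb{R}^N}w^+=\max_{\Omega}w^+>0$, attained at some interior point $x_0\in\Omega$. The function $v:=M-w^+$ is nonnegative on $\mathbb{R}^N$, vanishes at $x_0$, and is a supersolution of $\mathcal{L}$ in $\Omega$, since $\mathcal{L}v=-\mathcal{L}w^+\ge 0$. The strong maximum principle for $\mathcal{L}$ — which follows from the Harnack type inequality for the fractional operator with a first order linear perturbation (stated in this section) and uses the nonlocality of $(-\Delta)^s$ — forces $v\equiv 0$ in $\mathbb{R}^N$, i.e. $w^+\equiv M$; but $w^+=0$ outside $\Omega$ while $M>0$, a contradiction. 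Hence $M=0$ and $w_1\le w_2$ in $\Omega$.

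The main obstacle is precisely the Harnack/strong-maximum-principle for $\mathcal{L}=(-\Delta)^s-\vec c\cdot\nabla$ with merely $\vec c\in L^\sigma$, $\sigma>\frac{N}{2s-1}$: the drift is of lower order than $(-\Delta)^s$ but unbounded, so it must be absorbed by the good term, and this is exactly where the subcriticality $\tfrac1\sigma+\tfrac1{p_*}<1$ enters — both in the De Giorgi--Moser type iteration behind the Harnack inequality and in the bootstrap that raises the a priori $W^{1,p}_0$ ($p<p_*$) regularity of $w_1-w_2$, coming only from the potential theory of Section \ref{sec2}, to the boundedness and continuity used above. One should also be aware that, unlike the local case $s=1$, the fractional energy $\|w^+\|_{H^s_0(\Omega)}$ does not control $\|\nabla w^+\|_{L^p(\Omega)}$, so the drift cannot be absorbed through a naive energy estimate and the potential-theoretic route together with the Harnack inequality is essential.
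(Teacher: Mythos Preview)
Your opening moves coincide with the paper's: set $w=w_1-w_2$, subtract the two inequalities, apply Kato's inequality, and arrive at
\[
(-\Delta)^s w^+\le C\,b(x)\,|\nabla w^+|\quad\text{in }\Omega,\qquad w^+\in W^{1,p}_0(\Omega)\ \text{for all }p<p_*,\qquad w^+=0\ \text{outside }\Omega.
\]
From here the paper does \emph{not} try to bootstrap $w^+$ and apply Harnack directly. Instead it passes to the \emph{adjoint} operator: with $\mathcal F(x)=b(x)\nabla w^+/|\nabla w^+|$, it first proves (via a bootstrap and Harnack applied to the \emph{homogeneous solution}, Lemma~\ref{key3}, plus a fixed point/Fredholm argument, Lemma~\ref{key4}) that the dual problem
\[
(-\Delta)^s v+\operatorname{div}(\mathcal F\,v)=1\ \text{ in }\Omega,\qquad v=0\ \text{ in }\mathbb R^N\setminus\Omega,
\]
has a nonnegative bounded solution. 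Testing the inequality for $w^+$ against this $v$ and integrating by parts gives immediately $\int_\Omega w^+\le 0$, hence $w^+\equiv 0$. No regularity of $w^+$ beyond $W^{1,p}_0$, $p<p_*$, is ever needed.

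The gap in your proposal is precisely the bootstrap step. You write ``comparing $w^+$ with the solution of $(-\Delta)^s z=Cb|\nabla w^+|$ \ldots\ and a finite iteration one obtains $w^+\in L^\infty(\Omega)$, continuous''. But $w^+$ is only a \emph{subsolution}: the comparison $w^+\le z$ gives a pointwise bound, not a bound on $|\nabla w^+|$, so the iteration does not close. The improvement of gradient integrability in Lemma~\ref{key2} (and in the proof of Lemma~\ref{key3}) relies on the representation $w=\int \mathcal G_s(\cdot,y)h(y)\,dy$ for a \emph{solution}, which yields $|\nabla w|\le C\,I_{2s-1}(|h|)$; for a subsolution there is no such two--sided control of $(-\Delta)^s w^+$ (Kato gives only an upper bound), and one step of comparison does not place $Cb|\nabla w^+|$ in $L^m$ with $m>N/(2s)$ unless $N\le 2$. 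A second difficulty is that Proposition~\ref{H} is stated for $C^{1,\alpha}$ \emph{solutions}; you would need at least a weak Harnack inequality for supersolutions to run the strong maximum principle on $M-w^+$. The paper sidesteps both issues by shifting the regularity burden to the \emph{solution} $v$ of the adjoint equation, where the bootstrap and Harnack arguments (Lemma~\ref{key3}) are legitimate.
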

In order to prove Theorem \ref{compa2} we will follow  the arguments used by Porretta in \cite{P} for differential equations and for that  we need some results on
problems with first order term.

Let us begin by recalling the following Harnack inequality proved in \cite{BJ}.
\begin{Proposition}\label{H}(Harnack inequality).
Assume that $B\in (L^\sigma(\O))^N$ with $\s>\frac{N}{2s-1}$ and let $w\in \mathcal{C}^{1, \alpha}(\Omega)$ be a nonnegative function in $\ren$ such that
$$(-\Delta)^s w-\langle B(x),\n w\rangle =0 \mbox{ in } B_R$$
with $B_R\subset\subset \Omega$. Then
$$
\sup_{B_R}w\le C\inf_{B_R} w
$$
where $C\equiv C(\O, B_R)$.
\end{Proposition}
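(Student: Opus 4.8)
The exponent $\sigma>\frac{N}{2s-1}$ is precisely the subcritical (Kato--type) range for a first order perturbation of $(-\Delta)^s$: by Lemma \ref{estimmm}, the Green kernel $\mathcal{G}_{B_\rho}$ of $(-\Delta)^s$ on a ball $B_\rho$ obeys $\mathcal{G}_{B_\rho}(x,y)\le C|x-y|^{2s-N}$ and $|\nabla_x\mathcal{G}_{B_\rho}(x,y)|\le C|x-y|^{2s-1-N}$, and since $(N-2s+1)\sigma'<N$ for such $\sigma$, H\"older's inequality makes the Newtonian--type potentials generated by $B$ bounded, with a gain of a strictly positive power of $\rho$ on small balls (the relevant exponents $2s-N/\sigma>1$ and $2s-1-N/\sigma>0$ are both positive). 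The plan is to treat $\langle B,\nabla w\rangle$ as a genuinely lower order term: first establish an a priori interior gradient bound for $w$; then, on a small ball, split $w$ into the Green potential of $\langle B,\nabla w\rangle$ --- which turns out to be a small multiple of $\sup w$ --- plus a nonnegative $s$-harmonic function, to which the classical Harnack inequality for $(-\Delta)^s$ applies; and finally remove the error term and globalize.

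For the a priori gradient bound, fix $B_{2\rho}(x_0)\subset\subset\Omega$. Since $w\in\mathcal{C}^{1,\alpha}(\Omega)$, the datum $g:=\langle B,\nabla w\rangle$ lies in $L^\sigma_{\mathrm{loc}}$ and is a bounded function, and $(-\Delta)^s w=g$ in $B_{2\rho}(x_0)$ gives the representation, for $x\in B_{2\rho}(x_0)$,
\[
w(x)=\int_{B_{2\rho}(x_0)}\mathcal{G}_{B_{2\rho}}(x,y)\,g(y)\,dy+\int_{\mathbb{R}^N\setminus B_{2\rho}(x_0)}\mathcal{P}_{B_{2\rho}}(x,z)\,w(z)\,dz=:I[w](x)+h(x),
\]
with $\mathcal{P}_{B_{2\rho}}$ the fractional Poisson kernel of the ball; here $h\ge 0$ because $w\ge 0$ on all of $\mathbb{R}^N$ --- the only place the global sign of $w$ is used, and what makes the nonlocal tail harmless --- and $h$ is $s$-harmonic in $B_{2\rho}(x_0)$. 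The kernel bounds and H\"older give $\|\nabla I[w]\|_{L^\infty(B_\rho(x_0))}\le C\rho^{\,2s-1-N/\sigma}\|B\|_{L^\sigma}\|\nabla w\|_{L^\infty(B_{2\rho}(x_0))}$ and $\|I[w]\|_{L^\infty(B_{2\rho})}\le C\rho^{\,2s-N/\sigma}\|B\|_{L^\sigma}\|\nabla w\|_{L^\infty(B_{2\rho})}$, while the interior gradient estimate for the $s$-harmonic $h$ gives $\|\nabla h\|_{L^\infty(B_\rho(x_0))}\le C\rho^{-1}\sup_{B_{2\rho}(x_0)}h\le C\rho^{-1}\bigl(\sup_{B_{2\rho}}w+\|I[w]\|_{L^\infty}\bigr)$. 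As the powers of $\rho$ are strictly positive, running this estimate over a geometric sequence of radii and invoking the classical absorption (iteration) lemma --- legitimate because $\|\nabla w\|_{L^\infty}$ is finite on the relevant annulus --- there is $\rho_1=\rho_1(N,s,\sigma,\|B\|_{L^\sigma(\Omega)})$ such that for $\rho\le\rho_1$,
\[
\|\nabla w\|_{L^\infty(B_\rho(x_0))}\le C\rho^{-1}\sup_{B_{2\rho}(x_0)}w.
\]

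Now fix $\rho\le\rho_1$ and $B_{2\rho}(x_0)\subset\subset\Omega$. The gradient bound yields $\|g\|_{L^\sigma(B_\rho(x_0))}\le C\rho^{-1}\|B\|_{L^\sigma}\sup_{B_{2\rho}(x_0)}w$, so, splitting $w$ on $B_\rho(x_0)$ as above (with $h\ge 0$ $s$-harmonic in $B_\rho(x_0)$), one gets $\|I[w]\|_{L^\infty(B_\rho(x_0))}\le\theta(\rho)\sup_{B_{2\rho}(x_0)}w$, where $\theta(\rho):=C\rho^{\,2s-1-N/\sigma}\|B\|_{L^\sigma}\to 0$ as $\rho\to 0$. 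The classical Harnack inequality for nonnegative $s$-harmonic functions gives $\sup_{B_{\rho/2}(x_0)}h\le c_0\inf_{B_{\rho/2}(x_0)}h$ with $c_0=c_0(N,s)$, hence for $x,x'\in B_{\rho/2}(x_0)$,
\[
w(x)=h(x)+I[w](x)\le c_0\,h(x')+\theta(\rho)\sup_{B_{2\rho}}w\le c_0\,w(x')+(c_0+1)\theta(\rho)\sup_{B_{2\rho}(x_0)}w,
\]
that is $\sup_{B_{\rho/2}(x_0)}w\le c_0\inf_{B_{\rho/2}(x_0)}w+(c_0+1)\theta(\rho)\sup_{B_{2\rho}(x_0)}w$. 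Choosing $\rho=\rho_2$ small enough that $(c_0+1)\theta(\rho_2)\le\frac12$, a standard iteration over a geometric sequence of scales (exactly as in the passage from a weak Harnack / oscillation estimate to the full Harnack inequality) absorbs the remainder and produces $\sup_{B_{\rho_2/2}(x_0)}w\le C_0\inf_{B_{\rho_2/2}(x_0)}w$, uniformly in admissible centers $x_0$; covering $B_R$ by finitely many such small balls and using its connectedness yields $\sup_{B_R}w\le C\inf_{B_R}w$ with $C$ depending on $N,s,\|B\|_{L^\sigma(\Omega)}$ and $\mathrm{dist}(B_R,\partial\Omega)$, i.e. $C\equiv C(\Omega,B_R)$.

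The one genuinely nonlocal difficulty is the a priori gradient bound: one cannot simply test the equation against a cutoff, so $\|\nabla w\|_{L^\infty}$ must be quantified either through the Poisson representation above --- tracking the nonlocal tail of $w$, which is controlled precisely because $w\ge 0$ on $\mathbb{R}^N$ --- or through control of a commutator $[(-\Delta)^s,\eta]w$; everything else is a controlled perturbation of the classical theory of $(-\Delta)^s$, with the structural exponent $\sigma>\frac{N}{2s-1}$ doing all the work. An alternative, which is the route taken in \cite{BJ}, is to prove two-sided bounds $p^B(t,x,y)\asymp p^0(t,x,y)$ for the transition density of the process with generator $(-\Delta)^s-\langle B,\nabla\,\cdot\,\rangle$ (valid for $B$ in the Kato-type class containing $L^\sigma$, $\sigma>\frac{N}{2s-1}$); these give $\mathcal{G}^B_{B_R}\asymp\mathcal{G}_{B_R}$ and comparability $\mathcal{P}^B_{B_R}(x,z)\asymp\mathcal{P}^B_{B_R}(x',z)$ of the perturbed Poisson kernel for $x,x'$ in a fixed compact subset of $B_R$, whence the Harnack inequality follows at once from the mean value formula $w(x)=\int_{\mathbb{R}^N\setminus B_R}\mathcal{P}^B_{B_R}(x,z)\,w(z)\,dz$, valid for any function harmonic for that perturbed operator in $B_R$.
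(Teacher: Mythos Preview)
The paper does not prove this proposition; it simply quotes the result from \cite{BJ}. Your closing paragraph already identifies correctly how \cite{BJ} actually proceeds: one establishes two-sided bounds $p^B\asymp p^0$ for the perturbed heat kernel via a Duhamel series (convergent precisely in the Kato class containing $L^\sigma$, $\sigma>\frac{N}{2s-1}$), whence $\mathcal G^B_{B_R}\asymp\mathcal G_{B_R}$ and $\mathcal P^B_{B_R}(x,\cdot)\asymp\mathcal P^B_{B_R}(x',\cdot)$ for $x,x'$ in a fixed compact subset of $B_R$; the Harnack inequality then drops out of the representation $w(x)=\int\mathcal P^B_{B_R}(x,z)\,w(z)\,dz$ with no iteration needed. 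So your ``alternative'' is in fact the proof the paper relies on.

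Your main PDE route --- split $w=h+I[w]$ on a small ball with $h\ge 0$ $s$-harmonic and $|I[w]|\le\theta(\rho)\sup_{B_{2\rho}}w$, then invoke the fractional Harnack for $h$ --- is a natural strategy, and both the interior gradient bound and the perturbed estimate $\sup_{B_{\rho/2}}w\le c_0\inf_{B_{\rho/2}}w+(c_0{+}1)\theta(\rho)\sup_{B_{2\rho}}w$ are correct. The step I do not believe is the absorption of the remainder, which you dismiss as ``standard iteration over a geometric sequence of scales''. The error lives on the \emph{larger} ball $B_{2\rho}$, and neither device you allude to removes it: iterating outward in scale produces factors $\theta(4^j\rho)$ that \emph{grow} with $j$; iterating inward only recovers continuity at the centre; and chaining $K\sim R/\rho$ small balls to pass to $B_R$ multiplies the error by roughly $c_0^{K}$, with $c_0^{R/\rho}\theta(\rho)\to\infty$ as $\rho\to 0$. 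This is not the ``weak Harnack $\Rightarrow$ full Harnack'' mechanism (which combines an $L^\varepsilon$--$\inf$ bound with a $\sup$--$L^p$ bound on the \emph{same} ball). Closing your argument seems to require an a priori doubling estimate $\sup_{B_{2\rho}}w\le C\sup_{B_\rho}w$ --- itself a Harnack-type statement --- and I do not see how to extract it from what you have; the kernel-comparability route of \cite{BJ} sidesteps exactly this difficulty.
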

\subsection{A uniqueness result for a related problem}
 We prove the following uniqueness result.
\begin{Lemma}\label{key3} Let $B$ be  a vector field in $\Omega$.
Assume that $B\in (L^\sigma(\O))^N$ with $\s>\frac{N}{2s-1}$ and let $w$ be a solution to the problem
\begin{equation}\label{grad1}
\left\{
\begin{array}{rcll}
(-\Delta)^s w &= &\langle B(x),\n w\rangle & \mbox{ in }\Omega , \\  w &=& 0 &\hbox{  in } \mathbb{R}^N\setminus\Omega,
\end{array}%
\right.
\end{equation}
with $|\n w| \in  M^{p_*}(\O)$, the Marcinkiewick space, then $w=0$.
\end{Lemma}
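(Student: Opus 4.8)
The plan is to first use a bootstrap argument to upgrade the regularity of $w$ to $\mathcal{C}^{1,\alpha}(\Omega)$, and then to derive $w\equiv 0$ from a strong maximum principle built on the Harnack inequality of Proposition \ref{H}.

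\textbf{Step 1 (bootstrap).} Since $\Omega$ is bounded, $|\nabla w|\in M^{p_*,\infty}(\Omega)$ gives $\nabla w\in L^q(\Omega)$ for every $q<p_*$, so by H\"older's inequality and $B\in (L^\sigma(\Omega))^N$ the right-hand side $\langle B,\nabla w\rangle$ belongs to $L^{m_1}(\Omega)$ with $\tfrac1{m_1}=\tfrac1\sigma+\tfrac1q$. Because $\sigma>\tfrac N{2s-1}$ we have $\tfrac1\sigma<\tfrac{2s-1}N$, and since $\tfrac1{p_*}=1-\tfrac{2s-1}N$, choosing $q$ close enough to $p_*$ yields $m_1>1$. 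Since $w$ solves $(-\Delta)^s w=\langle B,\nabla w\rangle$ with $\langle B,\nabla w\rangle\in L^{m_1}(\Omega)$, Lemma \ref{key2} applies and improves $\nabla w$ to $L^{p}(\Omega)$ for all $p<\tfrac{m_1N}{N-m_1(2s-1)}$ (when $m_1<\tfrac N{2s-1}$). A short computation shows $\tfrac1{p_{k+1}}=\tfrac1{p_k}+\big(\tfrac1\sigma-\tfrac{2s-1}N\big)+o(1)$ along the iteration, so each step decreases $\tfrac1p$ by at least a fixed positive amount; hence after finitely many steps the datum lies in $L^m(\Omega)$ with $m>\tfrac N{2s-1}$, and the second alternative of Lemma \ref{key2} gives $w\in\mathcal{C}^{1,\alpha}(\Omega)$. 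Finally, from the Green representation $w(x)=\int_\Omega\mathcal{G}_s(x,y)\langle B,\nabla w\rangle(y)\,dy$ together with $\mathcal{G}_s(x,y)\le C\,d^s(x)|x-y|^{s-N}$ (Lemma \ref{estimmm}) and $m>\tfrac N{2s-1}>\tfrac Ns$, we obtain $|w(x)|\le C\,d^s(x)$, so $w\in\mathcal{C}(\overline\Omega)$ with $w=0$ on $\partial\Omega$ (and $w\equiv0$ outside $\Omega$).

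\textbf{Step 2 (maximum principle).} Observe that $-w$ also solves \eqref{grad1}, since $(-\Delta)^s(-w)=-\langle B,\nabla w\rangle=\langle B,\nabla(-w)\rangle$ and the remaining hypotheses are symmetric; hence it suffices to prove $w\le 0$ in $\Omega$. Assume $M:=\max_{\overline\Omega}w>0$; as $w=0$ on $\partial\Omega$, the maximum is attained at an interior point $x_0$. Put $W:=M-w$ (with $w$ extended by $0$). Then $W\ge 0$ on all of $\mathbb{R}^N$ (inside $\Omega$ because $w\le M$, outside because $w=0$), $W\in\mathcal{C}^{1,\alpha}(\Omega)$, $W(x_0)=0$, and, using $(-\Delta)^s(M-w)=-(-\Delta)^sw$ pointwise in $\Omega$,
$$(-\Delta)^sW-\langle B(x),\nabla W\rangle=0\qquad\text{in }\Omega.$$
Choosing a ball $B_R\subset\subset\Omega$ with $x_0\in B_R$, Proposition \ref{H} gives $\sup_{B_R}W\le C\inf_{B_R}W\le C\,W(x_0)=0$, so $w\equiv M$ on $B_R$. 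Thus $\{x\in\Omega: w(x)=M\}$ is open and, by continuity, closed in $\Omega$; being nonempty and $\Omega$ connected, it equals $\Omega$, which contradicts $w=0$ on $\partial\Omega$. Therefore $w\le 0$, and applying the same to $-w$ gives $w\equiv 0$.

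\textbf{Main obstacle.} The delicate point is Step 1: one must verify that Lemma \ref{key2} produces a \emph{strict} gain at each iteration — this is exactly where the strict inequality $\sigma>\tfrac N{2s-1}$ is used, rather than $\sigma\ge\tfrac N{2s-1}$ — and that only finitely many iterations are needed to cross the threshold $m=\tfrac N{2s-1}$, together with the boundary decay $|w(x)|\lesssim d^s(x)$ needed to make the maximum-principle step legitimate; Step 2 is then a routine Harnack/connectedness argument.
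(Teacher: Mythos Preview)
Your proof is correct and follows essentially the same approach as the paper: a bootstrap via Lemma~\ref{key2} to reach $\mathcal{C}^{1,\alpha}$ regularity, followed by the strong maximum principle built on the Harnack inequality of Proposition~\ref{H}. The only differences are cosmetic---you track the gain per iteration directly rather than arguing by contradiction on the limit of the sequence $\{r_n\}$, and you add the explicit boundary decay $|w(x)|\lesssim d^s(x)$ to justify that the interior maximum is attained---but the architecture is identical.
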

\begin{proof}
 Sine $\s \frac{N}{2s-1}$ then using H\"older inequality it holds that $|\langle B(x),\n w\rangle|\in L^1(\O)$.

We claim that $w\in \mathcal{C}^{ 1, \alpha}(\Omega)$ for some $\a\in (0,1)$.

We divide the proof of the claim into two steps:

{
\textsc{First step.}  Suppose that $|\n w|\in L^{m}_{loc}(\Omega)$ with $m=\frac{N \sigma}{(2s-1)\sigma-N}$, then setting $h(x)=|\langle B(x),\n w\rangle|$ and
taking into account the regularity of $B$,  it follows that $h\in L^{\frac{N}{2s-1}}_{loc}(\Omega)\cap L^1(\O)$. Going back to \eqref{grad1} and by the first
point in Lemma \ref{key2-locc}, we conclude that $|\nabla w|\in L^p_{loc}(\Omega)$ for all $p<\infty$. Thus $h\in L^\sigma_{loc}(\O)\cap L^1(\O)$. Since
$\s>\frac{N}{2s-1}$, by the second point in Lemma \ref{key2-locc}, we conclude that $|\n w|\in \mathcal{C}^{0, \alpha}(\Omega)$ for some $\a\in (0,1)$ and the
result follows.}

{
\textsc{Second step.} We prove the  regularity result of the first step, namely that $|\n w|\in L^{\frac{N \sigma}{(2s-1)\sigma-N}}_{loc}(\Omega)$.\\
We will use a bootstrapping argument.\\
Since $|\n w| \in M^{p_*}(\O)$, then $h\in L^{l_1}(\Omega)$ for $1<l_1<\dfrac{p_*\sigma}{\sigma+p_*}$. Fix $l_1$ as above, then using Lemma \ref{key2-locc}, it
follows that $|\n w|\in L^{r_1}_{loc}(\Omega)$ with $r_1=\dfrac{l_1N}{N-l_1(2s-1)}$. Hence by the H\"{o}lder inequality we conclude that $h\in
L^{l_2}_{loc}(\Omega)$ with $l_2=\dfrac{r_1\sigma}{\sigma+r_2}$. Using again Lemma \ref{key2-locc} , it follows that $|\n w|\in L^{r_2}_{loc}(\Omega)$ with
$r_2=\dfrac{l_2N}{N-l_2(2s-1)}$.}

It is clear that $r_2>r_1$. Define by iteration the  sequence $\{r_n\}_n$ by $$r_{n+1}=\dfrac{N\sigma r_n}{N\sigma-r_n(\sigma(2s-1)-N)}.$$ If for some $n_0$,
$r_{n_0}\ge \dfrac{\sigma N}{(2s-1)\sigma-N}$, then the result follows.

We argue by contradiction. Assume that $r_n<\frac{\sigma N}{(2s-1)\sigma-N}$ for all $n$. It is easy  to show that $\{r_n\}_n$ is an increasing sequence. Hence
there exists $\bar{r}$ such that $r_n\uparrow \bar{r}\le \frac{\sigma N}{(2s-1)\sigma-N}$. Thus $\bar{r}=\frac{N\sigma \bar{r}}{N\sigma-\bar{r}(\sigma(2s-1)-N)}$,
hence $\bar{r}=0$, a contradiction with the fact that $\{r_n\}_n$ is an increasing sequence.

Therefore there exists $n_0\in \ene$ such that $r_{n_0}\ge \frac{\sigma N}{(2s-1)\sigma-N}$ and the claim follows.

Let  us prove now that $w\le 0$. If  by contradiction, $C=\max_{x\in \O} w(x)>0$, then there exists $x_0\in \O$ such that $w(x_0)=C$. We set $w_1=C-w$, then
$w_1\ge 0$ in $\ren$, $w_1(x_0)=0$ and
$$
(-\Delta)^s w_1-B(x)\n w_1=0 \mbox{  in   }\O.
$$
Consider $B_R=B_r(x_0)\subset\subset \O$. By  applying the Harnack inequality in Proposition \ref{H}  to $w_1$, we conclude that $\sup_{B_r(x_0)}w_1\le C(\O,
B_r)\inf_{B_r(x_0)}w_1=0$. Thus $w_1\equiv 0$ in $B_r(x_0)$. Since $\O$ is a bounded domain, then, applying Harnack inequality  a finite number of steps, we prove
that $w_1=0$ in $\O$. Thus $C\le 0$ and then $w\le 0$.

The linearity of the problem permits to apply similar arguments to  $-w$ (that is also a solution to \eqref{grad1}). Thus $w\equiv 0$ and the result follows.
\end{proof}

\subsection{Existence for an auxiliary problem}
Let us prove now the following existence result for an auxiliary problem.
\begin{Lemma}\label{key4}
Assume that $B\in (L^{\sigma_1}(\O))^N$ with $\s_1>\frac{N}{2s-1}$ and $0\le f\in L^{\s_2}(\O)$ with $\s_2>\frac{N}{2s}$, then the problem
\begin{equation}\label{grad2}
\left\{
\begin{array}{rcll}
(-\Delta)^s w &= & \langle B(x),\n w\rangle +f& \mbox{ in }\Omega , \\  w &=& 0 &\hbox{  in } \mathbb{R}^N\setminus\Omega,
\end{array}%
\right.
\end{equation}
has a unique nonnegative bounded solution $w$ such that {$w\in W^{1,p}_0(\O)$ for all $p<p_*$ and $|\n w|\in L^a_{loc}(\O)$ } for all $a<\infty$. In addition, if $\s_2>\frac{N}{2s-1}$, then $\nabla
w\in (\mathcal{C}^{0, \alpha}(\Omega))^N$ for some $\a\in (0,1)$.
\end{Lemma}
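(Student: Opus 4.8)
The plan is to deduce uniqueness from Lemma \ref{key3} and to obtain existence by a Leray--Schauder (Schaefer) fixed point argument, with the crucial a priori bound coming from the potential estimates of \cite{BJ}; the finer regularity then follows by a bootstrap through Lemma \ref{key2}.

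For uniqueness, if $w,\widetilde w$ are two solutions in the stated class, then $z:=w-\widetilde w$ solves $(-\Delta)^s z=\langle B(x),\nabla z\rangle$ in $\Omega$ with $z=0$ in $\mathbb R^N\setminus\Omega$, and $|\nabla z|\in M^{p_*,\infty}(\Omega)$ because $w,\widetilde w\in W^{1,p}_0(\Omega)$ for every $p<p_*$; Lemma \ref{key3} then forces $z\equiv0$. So the work lies in existence and regularity.

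For existence, let $S$ be the solution operator of $(-\Delta)^s$ with homogeneous exterior datum and set $\mathcal T(v):=S\big(\langle B(x),\nabla v\rangle+f\big)$. Since $\frac1{p_*}+\frac1{\sigma_1}<1$ (i.e. $\frac1{\sigma_1}<\frac{2s-1}{N}$) one may fix $q<p_*$ so that, for $v\in W^{1,q}_0(\Omega)$, the datum $\langle B,\nabla v\rangle+f$ belongs to $L^{m}(\Omega)$ for some $m>1$; by the compactness statement of Theorem \ref{key}, $\mathcal T$ is then a continuous, compact self-map of $W^{1,q}_0(\Omega)$. It remains to bound, uniformly in $t\in[0,1]$, any $v$ with $v=t\,\mathcal T(v)$, that is $(-\Delta)^sv-\langle tB,\nabla v\rangle=tf$ in $\Omega$, $v=0$ in $\mathbb R^N\setminus\Omega$. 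I would get this from the sharp estimates of \cite{BJ} for the Green function $\mathcal G^{\,B}_s$ of $(-\Delta)^s-\langle B,\nabla\cdot\rangle$: precisely because $B\in(L^{\sigma_1}(\Omega))^N$ with $\sigma_1>\frac{N}{2s-1}$ the drift is subcritical, so $\mathcal G^{\,B}_s$ is comparable to $\mathcal G_s$ and $|\nabla_x\mathcal G^{\,B}_s(x,y)|\le C|x-y|^{-(N-2s+1)}$, with constants uniform over drifts of norm at most $\|B\|_{\sigma_1}$. Writing $v(x)=t\int_\Omega\mathcal G^{\,tB}_s(x,y)f(y)\,dy$ and using $\sigma_2>\frac N{2s}$ (which makes $\sup_{x}\big\||x-\cdot|^{-(N-2s)}\big\|_{L^{\sigma_2'}(\Omega)}$ finite) together with Lemma \ref{ineq1} for the gradient, one obtains $\|v\|_{L^\infty(\Omega)}+\|v\|_{W^{1,q}_0(\Omega)}\le C(N,s,\Omega,\|B\|_{\sigma_1},\|f\|_{\sigma_2})$, independently of $t$. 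Schaefer's theorem then produces a solution $w$ of \eqref{grad2}; positivity $w\ge0$ follows from the maximum principle, seen most cleanly by first running the scheme with truncated data $B_n=T_n(B)$, $f_n=T_n(f)$, whose solutions $w_n\ge0$ obey the same uniform bounds and converge to $w$ through the compactness of $S$, while $w>0$ in $\Omega$ when $f\not\equiv0$ by the Harnack inequality of Proposition \ref{H}.

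Finally, the higher integrability of $\nabla w$ is obtained by bootstrapping the datum $g:=\langle B,\nabla w\rangle+f$ through Lemma \ref{key2}: from $|\nabla w|\in L^p$ for all $p<p_*$ one gets $g\in L^{m_1}$ with $\frac1{m_1}<\frac1{p_*}+\frac1{\sigma_1}<1$, hence $|\nabla w|\in L^{p_1}$ for some $p_1>p_*$, and each step raises the exponent by a fixed positive amount (again because $\frac1{\sigma_1}<\frac{2s-1}{N}$), so after finitely many iterations $g\in L^m$ with $m$ arbitrarily close to $\frac N{2s-1}$ and Lemma \ref{key2}(1) gives $|\nabla w|\in L^p(\Omega)$ for all $p<\infty$; if in addition $\sigma_2>\frac N{2s-1}$, then $g\in L^m(\Omega)$ for some $m>\frac N{2s-1}$ and Lemma \ref{key2}(2) yields $w\in\mathcal C^{1,\alpha}(\Omega)$, i.e. $\nabla w\in(\mathcal C^{0,\alpha}(\Omega))^N$. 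The hard part is exactly the uniform a priori bound for $v=t\,\mathcal T(v)$ \emph{without any smallness} on $\|B\|_{\sigma_1}$ or $|\Omega|$: the energy method fails since for $s<1$ the $H^s_0(\Omega)$ norm does not control $\nabla v$, and the naive potential estimate sends $\nabla v$ back into $L^q$ with a factor $\|B\|_{\sigma_1}$ and so closes only under a smallness condition; absorbing the drift once and for all into the comparable Green function $\mathcal G^{\,B}_s$ of \cite{BJ} is what makes the bound unconditional, the thresholds $\sigma_1>\frac N{2s-1}$ and $\sigma_2>\frac N{2s}$ being precisely what is needed for that comparability and for the boundedness of $\mathcal G_s$ tested against $f$.
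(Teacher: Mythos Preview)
Your overall structure---uniqueness from Lemma \ref{key3}, existence by a Leray--Schauder/Schaefer fixed point in $W^{1,q}_0(\Omega)$ for some $q<p_*$, then bootstrap through Lemma \ref{key2}---matches the paper. The real divergence is in how the a priori bound for $v=t\,\mathcal T(v)$ is obtained. You absorb the drift into the perturbed Green kernel $\mathcal G^{\,tB}_s$ and read off the bound from the representation $v=t\int_\Omega\mathcal G^{\,tB}_s f$. The paper instead argues by contradiction and normalization: if $\|u_n\|_{W^{1,p}_0}\to\infty$ along $u_n=\lambda_nT(u_n)$, set $v_n=u_n/\|u_n\|_{W^{1,p}_0}$; then $v_n$ solves the equation with datum $\lambda_nf/\|u_n\|\to0$, the right-hand sides stay bounded in $L^1$, and the compactness of Theorem \ref{key} gives $v_n\to v$ strongly in $W^{1,p}_0$ with $v$ a solution of the homogeneous drift equation, so $v=0$ by Lemma \ref{key3}, contradicting $\|v\|_{W^{1,p}_0}=1$.

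The paper's route is cheaper and closes with what is already in hand: it uses \cite{BJ} only through the Harnack inequality packaged in Proposition \ref{H} and Lemma \ref{key3}, and the blow-up trick turns that uniqueness statement directly into the unconditional a priori bound. Your route asks for more from \cite{BJ}. Comparability $\mathcal G^{\,B}_s\simeq\mathcal G_s$ is there, and would give you $\|v\|_{L^\infty}$; but the pointwise gradient bound $|\nabla_x\mathcal G^{\,B}_s(x,y)|\le C|x-y|^{-(N-2s+1)}$ that you invoke for the $W^{1,q}$ estimate is not established in \cite{BJ} (that paper gives sharp two-sided bounds on $\mathcal G^{\,B}_s$ and the Harnack inequality, not first-order estimates on the perturbed kernel). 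Without it, your representation does not control $\nabla v$ directly, and falling back on the unperturbed kernel $\mathcal G_s$ reintroduces exactly the smallness issue you identified. So either you must supply the perturbed gradient estimate separately---doable, but genuine extra work---or adopt the paper's normalization argument, which sidesteps the problem entirely. A minor further difference: the paper builds positivity into the fixed-point map by using $\langle B,\nabla u_+\rangle$ rather than $\langle B,\nabla u\rangle$, whereas you recover it a posteriori by approximation with truncated data.
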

\begin{proof} It is clear that the regularity of the solution follows using the same iteration argument as in the proof of the regularity result in Lemma
\ref{key3}. Let us prove  the existence part.

Fix $p<p_*$ be such that $p'<\sigma_1$ and define the operator $T: W^{1, p}_0(\O)\to W^{1, p}_0(\O)$ by setting $w=T(u)$ where $w$ solves
\begin{equation}\label{grad3}
\left\{
\begin{array}{rcll}
(-\Delta)^s w &= &\langle B(x),\n u_+\rangle +f& \mbox{ in }\Omega , \\  w &=& 0 &\hbox{  in } \mathbb{R}^N\setminus\Omega.
\end{array}%
\right.
\end{equation}
Since $u\in W^{1, p}_0(\O)$, then the existence and the uniqueness of $w$ follows by using approximating argument and the results of \cite{CV2}. It is clear that
if $w$ is a fixed point of $T$, then $w$ is a nonnegative solution to \eqref{grad2}. To show that $T$ has a fixed point, we will use the Schauder fixed point
theorem  (  see Theorem 11.3 in \cite{GT}).

From the result of \cite{CV2}, we conclude that $T$ is a compact operator.

We claim that there exists $M>0$ such that it $u=\l T(u)$ with $\l\in [0,1]$, then $||u||_{W^{1, p}_0(\O)}\le M$.

To prove the claim we argue by contradiction.

Assume that there exists sequences $\{\l_n\}\subset [0,1]$ and $\{u_n\}_n$ such that $u_n=\l_n T(u_n)$ and $||u||_{W^{1, p}_0(\O)}\to \infty$ as $n\to \infty$.
Define $v_n=\dfrac{u_n}{||u_n||_{W^{1, p}_0(\O)}}$, then $||v_n||_{W^{1, p}_0(\O)}=1$ and $v_n$ solves
\begin{equation}\label{grad21}
\left\{
\begin{array}{rcll}
(-\Delta)^s v_n &= &\langle B(x),\n (v_n)_+ \rangle+\l_n\dfrac{f}{||u_n||_{W^{1, p}_0(\O)}}& \mbox{ in }\Omega , \\  v_n &=& 0 &\hbox{  in }
\mathbb{R}^N\setminus\Omega.
\end{array}%
\right.
\end{equation}
It is clear that $v_n\ge 0$ and $||\langle B(x),\n (v_n)_+ \rangle||_{L^1(\O)}\le C$,  thus for all $l<p_*$, there exists a positive constant $C$ depending only
on the data and independents of the sequence $\{v_n\}_n$ such that
$$
||v_n||_{W^{1, l}_0(\O)}< C,
$$
and
$$
||\n v_n||_{W^{2s-1-\g, l}_0(\O)}\le C \mbox{  where }\g=\frac{N}{l}.
$$
Up to a subsequence, we find  that $v_n\rightharpoonup v$ weakly in $W^{1, l}_0(\O)\mbox{  for all  }l<p_*$, $v\in W^{1, l}_0(\O)$, $v\ge 0$ and $v$ solves
\begin{equation*}
\left\{
\begin{array}{rcll}
(-\Delta)^s v &= &\langle B(x),\n v\rangle & \mbox{ in }\Omega , \\  v &=& 0 &\hbox{  in } \mathbb{R}^N\setminus\Omega.
\end{array}%
\right.
\end{equation*}
From Lemma \ref{key3} we obtain that $v=0$ and from the compactness result of \cite{CV2} we obtain  that $v_n\to v$ strongly in $W^{1, l}_0(\O)\mbox{  for all
}l<p_*$, in particular for $l=p$ and then $||v||_{W^{1, p}_0(\O)}=1$ which is a contradiction. Hence the claim follows.

Thus $T$ has a fixed point and then problem  \eqref{grad2} has a nonnegative solution.

The uniqueness immediately follows. Indeed, if $w_1$ and $w_2$ are two solutions to problem \eqref{grad2}, then $\tilde{w}=w_1-w_2$ solves
\begin{equation*}
\left\{
\begin{array}{rcll}
(-\Delta)^s \tilde{w} &= & \langle B(x), \n \tilde{w}\rangle  & \mbox{ in }\Omega , \\  \tilde{w} &=& 0 &\hbox{  in } \mathbb{R}^N\setminus\Omega.
\end{array}%
\right.
\end{equation*}
By Lemma \ref{key3}, we know that $\tilde{w}=0$, thus $w_1=w_2$.
\end{proof}

\begin{remark}
We are able to prove the existence result in Lemma   \ref{key4} without the positivity condition on $f$, in fact, consider $w_1$ and $w_2$ the solutions to
problem \eqref{grad2} with data $f_+$ and $f_-$ respectively. Setting $w=w_1-w_2$, then $w$ solve
$$
(-\Delta)^s w=\langle B(x),\n w \rangle+f \mbox{ in }\Omega,
$$
with the same regularity.
\end{remark}
A remarkable result derives from the following  observations.

 Since $s>\frac 12$, we set $E=W^{2s, 2}(\O)\cap H^s_0(\O)$. Then if $w\in E$, it holds that $|\n w|\in W^{2s-1, 2}(\O)$, hence $|\n w|\in
 L^{\frac{2N}{N-2(2s-1)}}(\Omega)$.  By hypothesis $B\in (L^{\sigma_1}(\O))^N$ with $\s_1>\frac{N}{2s-1}$,  then $|\langle B(x), \n w\rangle|\in L^2(\Omega)$ for
 all $w\in E$.

Define $L(w)=(-\Delta)^s w-\langle B(x), \n w\rangle  $; then
$$L:L^2(\O)\to L^{2}(\Omega), \hbox{    with }  Dom(L)=E.$$
By Lemma \ref{key3} we have that $Ker(L)=\{0\}$. Thus $L^{-1}: L^2(\Omega)\to E$ is a well defined compact operator. Therefore by the Fredholm alternative theorem
we conclude that for all $f\in L^2(\Omega)$, the problem \eqref{grad3} has a unique solution $u\in E$.

Now, observe that for $u,v\in E$, we have
\begin{eqnarray*}
\langle L(u),v\rangle & = & \io (-\Delta)^s u\, v dx -\io \langle B(x), \n u\rangle  v\,dx\\ &=& \io u(-\Delta)^s v dx -\io \text{div}(B(x) v) u dx\\ &=& \io
u\bigg((-\Delta)^s v-\text{div}(B(x) v)\bigg)u dx.
\end{eqnarray*}
Hence, by defining
$$
K(v)=(-\Delta)^s v-\text{div}(B(x)v),
$$
then we have,
$$
\langle K(v),u\rangle=\langle v,L u\rangle,
$$
that is $K$ is the adjoint operator of  $L$. Since $0=\text{dim Ker}(L)=\text{dim Ker}(K)$, then for all $f\in L^2(\Omega)$ the problem
\begin{equation}\label{grad02}
\left\{
\begin{array}{rcll}
(-\Delta)^s v-\text{div}(B(x)v) &= & f & \mbox{ in }\Omega , \\  v &=& 0 &\hbox{  in } \mathbb{R}^N\setminus\Omega,
\end{array}%
\right.
\end{equation}
has a unique solution $u\in E$. In particular, we have:

{
\begin{Corollary}\label{cor11}
For all $f\in L^{\s_2}(\O)$ with $\s_2>\frac{N}{2s}$, there exists a unique solution $v$ to problem \eqref{grad02} such that $v\in H^s_0(\O)$ with $|\n v|\in L^{\frac{2N}{N-2(2s-1)}}(\O)$. In addition, if $\s_2>\frac{N}{2s-1}$, then $v\in \mathcal{C}^{0, \alpha}(\Omega)\cap L^\infty(\O)$ for some $\a\in (0,1)$.
If $f\ge 0$, then $v\ge 0$.
\end{Corollary}}

\subsection{Proof of Theorem \ref{compa2}}
We are now able to prove the comparison principle in Theorem  \ref{compa2}.

Consider $w=w_1-w_2$, then $w \in W^{1,p}(\O)$ for all $p<p_*$ and  $(-\Delta)^s  w\in L^1(\O)$.

We have just to show that $w^+=0$. It is clear that $w\le 0$ in $\mathbb{R}^N\setminus\Omega$. By \eqref{eq:compar1}, it follows that
$$
\begin{array}{lll}
(-\Delta)^s w &\le & H(x,\nabla w_1)-H(x,\nabla w_2)\le b(x)|\n w|.
\end{array}
$$
Now, using Kato's inequality (see for instance \cite{LPPS}) we get
\begin{equation}\label{ineq}
(-\Delta)^s w_{+}\le b(x)|\nabla w_{+}|, \:\: w_{+}=\rm max\{w,0\}\in W^{1,q}_0(\Omega)\mbox{  for  all  } q<p_*.
\end{equation}
Let $v$ be the unique positive bounded solution to problem
\begin{equation}\label{grad002}
\left\{
\begin{array}{rcll}
(-\Delta)^s v+\text{div}(\mathcal{F}(x)v) &= & 1 & \mbox{ in }\Omega , \\  v &=& 0 &\hbox{  in } \mathbb{R}^N\setminus\Omega,
\end{array}%
\right.
\end{equation}
where
$$
\mathcal{F}(x)= \left\{
\begin{array}{rcll}
b(x)\dfrac{\n w_+(x)}{|\n w_+(x)|}  & \mbox{ if }|\n w_+(x)|\neq 0\\ 0 & \mbox{  if } |\n w_+(x)|=0.
\end{array}%
\right.
$$

Recall that $b\in L^\s(\O)$ for some $\s>\frac{N}{2s-1}$, then $\mathcal{F}\in L^\s(\O)$. Now, using Corollary \ref{cor11} with $f\equiv 1$, it holds that $v\in L^\infty(\Omega)\cap \mathcal{C}^{1, \alpha}(\Omega)$ for some $\a\in (0,1)$.

Using an approximation argument we can use $v$ as a test function in \eqref{ineq} to obtain that
$$
\dyle \io (-\Delta)^s w_+ vdx\le \int_\Omega b(x) |\nabla w_+(x)|v(x)dx.
$$
On the other hand we have
$$
\begin{array}{lll}
\dyle \io (-\Delta)^s w_+ vdx & = & \dyle \io w_+(-\Delta)^s v dx\\ & =& \dyle\io w_+(-\text{div} \mathcal{F}(x) v)dx+\io w_+ dx\\ &=&\dyle \int_\Omega b(x)
|\nabla w_+(x)|v(x)dx + \io w_+dx.
\end{array}
$$
Hence $\dyle\io w_+\le 0$ and then $w\le 0$ in $\Omega$. Thus we conclude. \hskip 3cm $\square$

As a  byproduct of the previous result we obtain the following uniqueness results.
\begin{Corollary}\label{aplic1}
Let $g\in L^1(\O)$ be a nonnegative function. Suppose that $q\ge 1$ and $a>0$, then the problem
\begin{equation}\label{aprox} \left\{
\begin{array}{rcll}
(-\Delta)^s w &= & \dfrac{|\n w|^q}{a+|\n w|^q} +g & \text{   in }\Omega , \\  w &=& 0 &\hbox{  in } \mathbb{R}^N\setminus\Omega,
\end{array} \right.
\end{equation}
has a unique nonnegative solution $w$ such that $w\in W^{1, \theta}_0(\O)\mbox{  for all  }\theta<p_*$ and $T_k(w)\in H^s_0(\O)\cap W^{1, \alpha}(\Omega)$ for all
$\a<2s$.
\end{Corollary}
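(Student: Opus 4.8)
The right-hand side nonlinearity $H(x,\xi):=\dfrac{|\xi|^q}{a+|\xi|^q}$ does not depend on $x$, is bounded ($0\le H<1$), vanishes at $\xi=0$, and --- because $q\ge 1$ --- is globally Lipschitz in $\xi$: writing $H(\xi)=\phi(|\xi|)$ with $\phi(t)=t^q/(a+t^q)$, one has $\phi'(t)=qat^{q-1}/(a+t^q)^2$, which is continuous on $[0,\infty)$, vanishes as $t\to 0^+$ (here $q\ge 1$ is used) and as $t\to\infty$, hence is bounded by a constant $C=C(q,a)$; therefore $|H(\xi_1)-H(\xi_2)|\le C\,\big||\xi_1|-|\xi_2|\big|\le C|\xi_1-\xi_2|$. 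Thus $H$ satisfies the structural hypothesis of Theorem \ref{compa2} with the constant weight $b\equiv C$, which lies in $L^\sigma(\O)$ for every $\sigma$, in particular for some $\sigma>\frac{N}{2s-1}$.

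For existence the plan is a Schauder fixed point argument. Fix $1<p<p_*$ and define $T\colon W^{1,p}_0(\O)\to W^{1,p}_0(\O)$ by $T(u)=w$, where $w$ is the unique weak solution, provided by Theorem \ref{entropi}, of $(-\Delta)^s w=H(\n u)+g$ in $\O$ with $w=0$ in $\ren\setminus\O$; this is legitimate since $\|H(\n u)+g\|_{L^1(\O)}\le|\O|+\|g\|_{L^1(\O)}$. By Theorem \ref{key}, $T$ is well defined and $\|T(u)\|_{W^{1,p}_0(\O)}\le C\big(|\O|+\|g\|_{L^1(\O)}\big)=:R$ for every $u$, so $T$ maps the closed ball $\overline{B_R}\subset W^{1,p}_0(\O)$ into itself; moreover $T$ factors as $u\mapsto H(\n u)+g$, which takes values in a fixed bounded subset of $L^1(\O)$, followed by the compact linear solution operator of Theorem \ref{key}, so $T(\overline{B_R})$ is relatively compact. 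Continuity of $T$ follows from continuity of that solution operator together with the observation that if $u_n\to u$ in $W^{1,p}_0(\O)$ then, along subsequences, $\n u_n\to\n u$ a.e., hence $H(\n u_n)\to H(\n u)$ a.e., and, since $|H|\le 1$, also in $L^1(\O)$ by dominated convergence. Schauder's theorem then yields a fixed point $w$, i.e. a weak solution of \eqref{aprox}; and since $(-\Delta)^s w=H(\n w)+g\ge 0$ in $\O$ and $w=0$ outside (equivalently, the approximants in Theorem \ref{entropi} have nonnegative data), $w\ge 0$.

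The regularity is then immediate: with $f:=H(\n w)+g\in L^1(\O)$, Theorem \ref{key} gives $w\in W^{1,p}_0(\O)$ for all $p<p_*$, and Theorem \ref{key01} gives $T_k(w)\in H^s_0(\O)\cap W^{1,\alpha}(\O)$ for all $k>0$ and $\alpha<2s$, with $\io|\n T_k(w)|^\alpha\le Ck^{\alpha-1}\|f\|_{L^1(\O)}$. For uniqueness, let $w_1,w_2$ be two nonnegative solutions; each one satisfies both inequalities of \eqref{eq:compar1} with this $H$ and datum $g$, so Theorem \ref{compa2} applied to the pair $(w_1,w_2)$ gives $w_1\le w_2$ in $\O$, and applied to $(w_2,w_1)$ gives $w_2\le w_1$, whence $w_1=w_2$. (If $g\not\equiv 0$ the two solutions are strictly positive in $\O$ by the strong maximum principle, matching the positivity hypothesis of Theorem \ref{compa2}; if $g\equiv 0$, then $w\equiv 0$ is trivially the only solution.)

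The only genuinely delicate point is the Lipschitz bound on $H$ in the first step, which is precisely where the hypothesis $q\ge 1$ enters; everything else reduces to correctly matching the hypotheses of Theorems \ref{entropi}, \ref{key}, \ref{key01} and \ref{compa2}, the uniform bound $0\le H<1$ making the a priori estimate and the compactness of the fixed point map essentially automatic.
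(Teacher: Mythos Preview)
Your proof is correct and follows precisely the route the paper intends: the paper states Corollary~\ref{aplic1} without proof, as a ``byproduct'' of the comparison principle, so you are filling in the implicit argument, and you do so with exactly the tools the paper provides --- a Schauder fixed-point construction (in the spirit of Lemma~\ref{key4}) for existence, Theorem~\ref{compa2} for uniqueness, and Theorems~\ref{key} and~\ref{key01} for the stated regularity.

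Two minor remarks. First, for $q=1$ one has $\phi'(0)=1/a\neq 0$, so your claim that $\phi'$ ``vanishes as $t\to 0^+$'' is inaccurate in that borderline case; but this is harmless, since $\phi'(t)=a/(a+t)^2\le 1/a$ is still bounded and the global Lipschitz bound survives. Second, in the case $g\equiv 0$ your appeal to ``trivially the only solution'' can be made precise without extra work: the proof of Theorem~\ref{compa2} nowhere uses the strict positivity of $w_1,w_2$, so you may simply compare an arbitrary nonnegative solution with the identically zero solution in both directions.
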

\begin{Corollary}
Consider the problem
\begin{equation}\label{aprox2} \left\{
\begin{array}{rcll}
(-\Delta)^s w &= & |\n w|^q +\l g & \text{  in }\Omega , \\  w &=& 0 & \hbox{  in } \mathbb{R}^N\setminus\Omega,
\end{array} \right.
\end{equation}
with $1<q<p_*$ and $g\in L^1(\Omega)$, $g\ge 0$. Then there exists $\lambda^*$ such if   $\l<\l^*$, problem \eqref{aprox2} has a unique positive solution $w$ such
that $w\in W^{1, \theta}_0(\O)\mbox{  for all  }\theta<p_*$ and $T_k(w)\in H^s_0(\O)$ for all $k>0$.
\end{Corollary}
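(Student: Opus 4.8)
The plan is to prove existence by a fixed point argument on a small ball in $W^{1,p}_0(\O)$ --- which is exactly what forces $\l$ to be small --- and to deduce uniqueness directly from the comparison principle of Theorem~\ref{compa2}. Throughout set $p_*=\frac{N}{N-2s+1}$ and fix once and for all an exponent $p$ with $q<p<p_*$. I would define $T:W^{1,p}_0(\O)\to W^{1,p}_0(\O)$ by letting $T(v)=w$ be the unique weak solution, in the sense of Definition~\ref{def1}, of
\[
(-\Delta)^s w=|\n v|^q+\l g\quad\text{in }\O,\qquad w=0\ \text{in }\ren\setminus\O .
\]
This is well posed because $\|\,|\n v|^q\|_{L^1(\O)}\le|\O|^{1-q/p}\|\n v\|_{L^p(\O)}^{q}<\iy$, so the right hand side lies in $L^1(\O)$, and then Theorem~\ref{key} (or Theorem~\ref{entropi}) gives $w\in W^{1,r}_0(\O)$ for every $r<p_*$, in particular $w\in W^{1,p}_0(\O)$. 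First I would check that $T$ is continuous and compact: the map $v\mapsto|\n v|^q$ sends $W^{1,p}_0(\O)$ continuously into $L^1(\O)$ --- if $v_n\to v$ in $W^{1,p}_0(\O)$ then $\{|\n v_n|^q\}$ is bounded in $L^{p/q}(\O)$ with $p/q>1$, hence equi-integrable, and Vitali's theorem upgrades a.e.\ convergence along subsequences to $L^1(\O)$-convergence of the whole sequence --- while the linear solution operator $L^1(\O)\to W^{1,r}_0(\O)$ is compact by Theorem~\ref{key}(2); composing, $T$ is a compact continuous self-map of $W^{1,p}_0(\O)$.

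The heart of the argument is an a priori bound isolating an invariant ball. By \eqref{dd} and Hölder,
\[
\|T(v)\|_{W^{1,p}_0(\O)}\le C\big(\|\,|\n v|^q\|_{L^1(\O)}+\l\|g\|_{L^1(\O)}\big)\le C_1\|v\|_{W^{1,p}_0(\O)}^{q}+C_2\l\|g\|_{L^1(\O)} .
\]
Since $q>1$ I can fix $R_0>0$ so small that $C_1R_0^{q-1}\le\frac12$; then $T$ maps the closed ball $\overline{B_{R_0}}\subset W^{1,p}_0(\O)$ into itself as soon as $\l\le\l^*:=\big(2C_2\|g\|_{L^1(\O)}\big)^{-1}R_0$. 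Schauder's fixed point theorem on $\overline{B_{R_0}}$ then yields a fixed point $w$, i.e.\ a weak solution of \eqref{aprox2}, lying in $W^{1,r}_0(\O)$ for all $r<p_*$. Since $(-\Delta)^s w=|\n w|^q+\l g\ge\l g\ge0$ and is not identically zero, the strong maximum principle for $(-\Delta)^s$ gives $w>0$ in $\O$; and, the datum $|\n w|^q+\l g$ belonging to $L^1(\O)$, Theorem~\ref{key01} gives $T_k(w)\in H^s_0(\O)\cap W^{1,\a}_0(\O)$ for every $k>0$ and $\a<2s$. One could instead approximate \eqref{aprox2} by the truncated problems of Corollary~\ref{aplic1}, but then the a priori estimate above presents the usual ``two branches'' difficulty, which the fixed point on a ball circumvents.

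For uniqueness, let $w_1,w_2$ be two solutions in the stated class. With $H(x,\xi)=|\xi|^q$ one has $|H(x,\xi_1)-H(x,\xi_2)|\le q\big(|\xi_1|^{q-1}+|\xi_2|^{q-1}\big)|\xi_1-\xi_2|$, so Theorem~\ref{compa2} applies with $b(x)=q\big(|\n w_1(x)|^{q-1}+|\n w_2(x)|^{q-1}\big)$ once we know $b\in L^\s(\O)$ for some $\s>\frac{N}{2s-1}$. This is exactly where $q<p_*$ is used: $q<p_*=\frac{N}{N-2s+1}$ is equivalent to $(q-1)\frac{N}{2s-1}<p_*$, so one may choose $\s>\frac{N}{2s-1}$ with $(q-1)\s<p_*$; since $|\n w_i|\in L^{r}(\O)$ for every $r<p_*$, taking $r=(q-1)\s$ gives $b\in L^\s(\O)$. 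As moreover $w_i\in W^{1,p}(\O)$ for $p<p_*$, $w_i=0$ in $\ren\setminus\O$ and $(-\Delta)^s w_i=|\n w_i|^q+\l g\in L^1(\O)$, Theorem~\ref{compa2} applied to $(w_1,w_2)$ yields $w_1\le w_2$ and applied to $(w_2,w_1)$ yields $w_2\le w_1$; hence $w_1=w_2$.

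I expect the main obstacle to be the a priori estimate, that is, extracting the ball $\overline{B_{R_0}}$ left invariant by $T$: it is the superlinearity $q>1$ that makes this possible for small $\l$ and that dictates the value of $\l^*$, in complete analogy with the local KPZ equation. The secondary --- but for the sharpness of the range essential --- point is the membership $b\in L^\s(\O)$ in the uniqueness step, which is precisely what forces the strict restriction $q<p_*$.
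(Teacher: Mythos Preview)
Your proof is correct. The uniqueness half is essentially identical to the paper's: both apply the comparison principle of Theorem~\ref{compa2} with $b(x)=q\big(|\n w_1|+|\n w_2|\big)^{q-1}$ (or the equivalent sum of powers), and both identify $q<p_*$ as precisely the condition making $b\in L^\s(\O)$ for some $\s>\frac{N}{2s-1}=p_*'$.

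The existence half, however, differs. The paper does not prove existence here at all --- it simply cites \cite{CV2} (Chen--V\'eron), where the range $q<p_*$ is treated for measure data, and moves directly to uniqueness. You instead give a self-contained Schauder argument on a small ball in $W^{1,p}_0(\O)$, exploiting the linear estimate~\eqref{dd} and the compactness in Theorem~\ref{key}(2). This is exactly the mechanism the paper itself deploys in Lemma~\ref{key4} and again in Theorems~\ref{fix00}--\ref{fix011} for the critical and supercritical cases, so your route is entirely in the spirit of the paper; it is just more explicit than what the paper chose to write for this Corollary. The trade-off is that your argument is self-contained and makes the origin of $\l^*$ transparent (it comes from fitting $C_1R^q+C_2\l\|g\|_{L^1}\le R$), whereas the paper's version is shorter but imports the existence threshold from an external reference.
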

\begin{proof} The existence and regularity can be seen in \cite{CV2}. We prove the uniqueness.
Indeed if $w_1$ and $w_2$ are two positive solutions to problem \eqref{aprox2} with the above regularity, defining $\bar{w}=w_1-w_2$, then $(-\Delta)^s \bar{w}\in
L^1(\O)$, $\bar{w}\in W^{1, q}_0(\O)\mbox{  for all  }q<p_*$, $T_k(\bar{w})\in H^s_0(\O)$ for all $k>0$ and
$$
(-\Delta)^s \bar{w}\le H(x,\nabla w_1)-H(x,\nabla w_2)\le b(x)|\n \bar{w}|
$$
where $b(x)=q\Big(|\n w_1|+|\n w_2|\Big)^{q-1}$. Since $q<p_*$, then $q'>p_*'$, hence $b\in L^\s(\O)$ for some $\s>\frac{N}{2s-1}$. Thus using the comparison
principle in Lemma \ref{compa2}, we conclude that $\bar{w}_+=0$. In the same way and setting $\widehat{w}=w_2-w_1$, we obtain that $\widehat{w}_+=0$. Thus
$w_1=w_2$.
\end{proof}
 In particular we can state the following comparison principle.
\begin{Theorem}\label{compag}
Assume that $g\in L^1(\O)$ is a nonnegative function. Let $w_1, w_2$ be two nonnegative functions such that $w_1, w_2\in W^{1,p}(\O)$ for some $1\le q<p_*$,
$(-\Delta)^s  w_1, (-\Delta)^s  w_2\in L^1(\O)$,  $w_1\le w_2$ in $\ren \setminus\Omega$ and
\begin{equation}\label{eq:compar100U} \left\{
\begin{array}{rcll}
(-\Delta)^s  w_1 &\le &  |\n w_1|^q+g & \hbox{ in  }\Omega,\\ (-\Delta)^s w_2 & \ge & |\n w_2|^q+g & \hbox{ in  }\Omega.
\end{array} \right.
\end{equation}
Then, $w_2\ge w_1$ in $\Omega$.

As a consequence, if $1\le q<p_*$ and $g\equiv 0$, then the unique weak solution to problem \eqref{aprox2} is $w=0$.
\end{Theorem}
\section{The subcritical problem: Existence results via comparison arguments. }\label{sec4}
In this section we consider the problem
\begin{equation}\label{qqA}
\left\{
\begin{array}{rcll}
(-\Delta)^s u &= & |\nabla u|^{q}+\l f & \mbox{ in }\Omega , \\ u &=& 0 &\hbox{  in } \mathbb{R}^N\setminus\Omega,\\ u&>&0 &\hbox{ in }\Omega,
\end{array}%
\right.
\end{equation}%
where $s\in (\dfrac 12,1)$, $q<2s$ and $f\in L^\sigma(\O)$ for some convenient $\s>1$. The main goal of this section is to show that, under additional hypotheses
on $f$, we are able to build a suitable supersolution and then the comparison principle in Theorem \ref{compa2} allows us to use a monotony argument in order to
prove the existence of a minimal positive solution.

\begin{remark}\label{nonn}
Notice that in the local case, the existence of a solution is guaranteed under the condition
\begin{equation}\label{CC}
\inf_{\phi\in \mathcal{C}^\infty_0(\O)}\dfrac{\dyle\io |\n \phi|^{q'}dx}{\dyle\io f|\phi|^{q'}dx}>0.
\end{equation}
By using the spectral theory, it is clear that the above condition holds if $f\in L^\s(\O)$ for some $\s>\frac{N}{q'}$.
\end{remark}

\subsection{ A radial supersolution}
We will start by building  a radial  supersolution with the required regularity.

Define $w(x)=(1-|x|^\alpha)_+$ where $1<\alpha<2s$. Since  $w$ is a radial function, then following closely the computations in \cite{FV}, it holds that
\begin{eqnarray*}
(-\Delta )^s w(x) & = & r^{-2s}\int_{1}^\infty\bigg((w(r)-w(\s r))+(w(r)-w(\frac{r}{\s}))\s^{2s-N}\bigg)\s(\s^2-1)^{-1-2s}H(\sigma)d\sigma
\end{eqnarray*}
where $r=|x|$ and $H$ is a continuous positive function defined in $[1, \infty)$ with $H(\s)\backsimeq \s^{2s}$ as $\s\to \infty$. Fix $r_0<1$ to be chosen later,
then for $r<r_0$, we have
\begin{eqnarray*}
(-\Delta )^s w(x) & = &
r^{-2s}\int_{1}^{\frac{1}{r}}\bigg(r^\alpha(\s^\alpha-1)+r^\alpha(\frac{1}{\s^\alpha}-1)\s^{2s-N}\bigg)\s(\s^2-1)^{-1-2s}H(\sigma)d\sigma\\ &+& r^{-2s}
\int_{\frac{1}{r}}^\infty\bigg((1-r^\alpha)+r^\alpha(\frac{1}{\s^\alpha}-1)\s^{2s-N}\bigg)\s(\s^2-1)^{-1-2s}H(\sigma)d\sigma\\ &=&
r^{\alpha-2s}\int_{1}^{\frac{1}{r}}(\s^\alpha-1)(1-\s^{2s-N-\alpha})\s(\s^2-1)^{-1-2s}H(\sigma)d\sigma\\ &+& r^{\alpha-2s}
\int_{\frac{1}{r}}^\infty\bigg((\frac{1}{r^\alpha}-1)-(1-\frac{1}{\s^\alpha})\s^{2s-N}\bigg)\s(\s^2-1)^{-1-2s}H(\sigma)d\sigma\\ &=& r^{\alpha-2s} F(r)
\end{eqnarray*}
with
\begin{eqnarray*}
F(r)& = & \int_{1}^{\frac{1}{r}}(\s^\alpha-1)(1-\s^{2s-N-\alpha})\s(\s^2-1)^{-1-2s}H(\sigma)d\sigma\\ & + &
\int_{\frac{1}{r}}^\infty\bigg((\frac{1}{r^\alpha}-1)-(1-\frac{1}{\s^\alpha})\s^{2s-N}\bigg)\s(\s^2-1)^{-1-2s}H(\sigma)d\sigma.\\
\end{eqnarray*}
We claim that $F(r)\ge C(r_0)>0$ for all $r\in (0,r_0)$. By a direct computation we find that $F'(r)<0$, hence to conclude, we have just to show that $F(r_0)\ge
C(r_0)$ for suitable $r_0<1$.

Notice that $(\s^\alpha-1)(1-\s^{2s-N-\alpha})\s(\s^2-1)^{-1-2s}H(\sigma)>0$ for all $\s>1$. On the other hand we have $(1-\frac{1}{\s^\alpha})\s^{2s-N}\le
(1-\frac{1}{\s_0^\alpha})\s_0^{2s-N}$ where $\s_0=(\frac{N+\a-2s}{N-2s})^{\frac{1}{\alpha}}$. Thus for $\s>\frac{1}{r}$ we have
$$
(\frac{1}{r^\alpha}-1)-(1-\frac{1}{\s^\alpha})\s^{2s-N}\ge (\frac{1}{r^\alpha}-1)-(1-\frac{1}{\s_0^\alpha})\s_0^{2s-N}.
$$
Defining $r_0\equiv \bigg(\frac{N+\alpha-2s}{N+\alpha-2s+\s_0^{2s-N}}\bigg)^{\frac{1}{\a}}<1$, for $r\le r_0$ and $\s\ge \frac{1}{r}$, it holds that
$$
(\frac{1}{r^\alpha}-1)-(1-\frac{1}{\s^\alpha})\s^{2s-N}\ge 0.
$$
Combining the above estimate we reach that $F(r)\ge C(r_0)>0$ for all $r\le r_0$. Notice that $|\n w|=\alpha |x|^{\alpha-1}\chi_{\{|x|<1\}}$.  Since
$1<\alpha<2s$, setting $w_1=Cw$ for some $C>0$, we obtain that $w_1$ satisfies
\begin{equation}\label{super1}
\left\{
\begin{array}{rcll}
(-\Delta)^s w_1 &\ge & |\nabla w_1|^{q}+\l \dfrac{1}{|x|^{2s-\alpha}} & \mbox{ in }B_{r}(0), \\ w_1 &\ge & 0 &\hbox{  in } \mathbb{R}^N\setminus B_r(0),\\ w_1
&>&0 &\hbox{ in }B_r(0),
\end{array}%
\right.
\end{equation}%
for some $\l>0$.

It is clear that, modulo a rescaling argument, the above construction holds in any bounded domain.

In the case   $p_*<q<2s$, we can guess a positive supersolution in the form
$$S(x)= A |x|^{-\alpha},\quad A,\quad 0<\alpha< N-2s.$$
By direct calculation  we obtain
$$(-\Delta )^s S(x)= C_{N,s}(\alpha) A |x|^{-\alpha-2s}, \quad C_{N,s}(\alpha)>0,$$
and $|\nabla S(x)|\le A\alpha |x|^{-(\alpha+1)}.$

Therefore to have a radial solution in the whole $\mathbb{R}^N$, the following identity must be verified,
$$C_{N,s}(\alpha)|x|^{-\alpha-2s}\ge \alpha^q A^{q-1}|x|^{-(\alpha+1)} \hbox{ for all } x\in \mathbb{R}^N,$$
that is, necessarily $\alpha=\dfrac{2s-q}{q-1}$ and the condition $q<2s$ appears in a natural way.

Hence, it is sufficient to pick-up $A$ such that $\alpha^q A^{q-1}\le C_{N,s}(\alpha)$.

\noindent If the source term $f\in L^\infty(\Omega)$, then we just have to choose $\lambda>0$ small enough in order to have a supersolution  $S$ in $\Omega$.

Since $q>\frac{N}{N-2s+1}\equiv p_*$, then $S\in W^{1,q}_{loc}(\mathbb{R}^N)$ and any translation $\tilde{S}(x)= S(x-x_0)$ is also a supersolution, thus choosing
$x_0\in \ren\backslash \overline{\O}$, $\tilde{S}(x)$ is a bounded supersolution.
\subsection{A first result on the existence of weak solution}
We have the next existence result.
\begin{Theorem}\label{exit1}
Assume $f\in L^\infty(\Omega)$.  Let $w$ be a bounded supersolution to \eqref{qqA} such that $w\in W^{1,q}_0(\O)\cap L^\infty(\O)$. Then problem \eqref{qqA} has a
solution $u$ such that $w\in W^{1,\a}_0(\O)$ for all $\a<2s$.
\end{Theorem}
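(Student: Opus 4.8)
The plan is to construct $u$ as the increasing limit of solutions of truncated problems, using $w$ as an upper barrier and the comparison principle of Theorem \ref{compa2} to bracket the approximation. Fix $\alpha$ with $q<\alpha<2s$, set $M:=\|w\|_{L^\infty(\Omega)}$, and for $n\in\mathbb N$ let $H_n\colon\mathbb R^N\to\mathbb R^+$ be a bounded, globally Lipschitz approximation of $\xi\mapsto|\xi|^q$ from below — e.g. $H_n(\xi)=\min\{|\xi|^q,n\}$ — so that $0\le H_n(\xi)\le H_{n+1}(\xi)\le|\xi|^q$ and $H_n(\xi)\to|\xi|^q$ for each $\xi$. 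I would first solve
\begin{equation*}
\begin{cases}
(-\Delta)^s u_n = H_n(\nabla u_n)+\lambda f & \text{in }\Omega,\\
u_n = 0 & \text{in }\mathbb R^N\setminus\Omega;
\end{cases}
\end{equation*}
since $H_n(\nabla\,\cdot\,)+\lambda f\in L^\infty(\Omega)$, a fixed point argument as for Corollary \ref{aplic1} and Lemma \ref{key4} (Schauder's theorem, the compactness in Theorem \ref{key}, and Theorem \ref{compa2} for uniqueness) gives a unique nonnegative solution $u_n$ with $u_n\in W^{1,p}_0(\Omega)$ for all $p<p_*$ and $T_k(u_n)\in H^s_0(\Omega)\cap W^{1,\alpha}_0(\Omega)$. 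Because $H_n(\xi)\le|\xi|^q$, the hypothesis on $w$ yields $(-\Delta)^s w\ge|\nabla w|^q+\lambda f\ge H_n(\nabla w)+\lambda f$, so $w$ is a supersolution and $0$ a subsolution of the $n$-th problem; as $H_n$ is Lipschitz in $\xi$ with a constant (hence $L^\sigma$) coefficient, Theorem \ref{compa2} (directly applicable, the barriers built in the previous subsection being Lipschitz) gives $0\le u_n\le w\le M$ in $\Omega$. Similarly $H_n\le H_{n+1}$ makes $u_n$ a subsolution of the $(n+1)$-th problem, whence $u_n\le u_{n+1}$. Thus $u_n\uparrow u$ with $0\le u\le M$, and $u\ge u_1>0$ in $\Omega$ (provided $\lambda f\not\equiv0$), $u_1$ being the solution of $(-\Delta)^s u_1=\lambda f$.

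Next I would establish a uniform higher-integrability estimate. Since $0\le u_n\le M$ we have $u_n=T_M(u_n)$, so Theorem \ref{key01} applied with datum $F_n:=H_n(\nabla u_n)+\lambda f\in L^1(\Omega)$ gives
\begin{equation*}
\int_\Omega|\nabla u_n|^\alpha=\int_\Omega|\nabla T_M(u_n)|^\alpha\le CM^{\alpha-1}\|F_n\|_{L^1(\Omega)}\le CM^{\alpha-1}\Big(\int_\Omega|\nabla u_n|^q+\lambda\|f\|_{L^1(\Omega)}\Big),
\end{equation*}
and Hölder's inequality bounds $\int_\Omega|\nabla u_n|^q$ by $|\Omega|^{1-q/\alpha}\big(\int_\Omega|\nabla u_n|^\alpha\big)^{q/\alpha}$. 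Since $q/\alpha<1$, this sublinear feedback (all quantities being finite a priori, as $u_n\in W^{1,\alpha}_0(\Omega)$) forces $\int_\Omega|\nabla u_n|^\alpha\le C_0$ uniformly in $n$; hence $\{u_n\}$ is bounded in $W^{1,\alpha}_0(\Omega)$ and $\{F_n\}$ in $L^1(\Omega)$. Here the $L^\infty$-barrier furnished by the supersolution and the subcriticality $q<2s$ are used in an essential way.

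Finally I would pass to the limit. By the compactness of $T\colon L^1(\Omega)\to W^{1,q_0}_0(\Omega)$ for $q_0<p_*$ (Theorem \ref{key}) and the a.e. convergence $u_n\to u$, one gets $u_n\to u$ strongly in $W^{1,q_0}_0(\Omega)$ for every $q_0<p_*$; combined with the uniform bound in $W^{1,\alpha}_0(\Omega)$, $q<\alpha$, this yields $\nabla u_n\to\nabla u$ in $L^q(\Omega)$ (by interpolation when $q\ge p_*$, directly otherwise), hence $|\nabla u_n|^q\to|\nabla u|^q$ in $L^1(\Omega)$ and, since $0\le|\nabla u_n|^q-H_n(\nabla u_n)\to0$, also $F_n\to|\nabla u|^q+\lambda f$ in $L^1(\Omega)$ by Vitali's theorem. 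Passing to the limit in the weak formulation of Definition \ref{def1} (using $u_n\to u$ in $L^1(\Omega)$) shows that $u$ solves \eqref{qq}, and the bound $u\le M$ together with Theorem \ref{key01} gives $u=T_M(u)\in W^{1,\alpha}_0(\Omega)$ for every $\alpha<2s$, the asserted regularity. I expect the main obstacle to be precisely this last step: obtaining the strong $L^q$-convergence of the gradients needed to identify the nonlinear limit, which is why the compactness of the Green operator and the uniform control of $\int_\Omega|\nabla u_n|^\alpha$ with $\alpha>q$ are both indispensable.
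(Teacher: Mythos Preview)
Your proof is correct and follows essentially the same scheme as the paper's: approximate by problems with bounded Lipschitz nonlinearity, use the comparison principle (Theorem \ref{compa2}) against $w$ to get $0\le u_n\le u_{n+1}\le w\le M$, derive the key estimate $\int_\Omega|\nabla u_n|^\alpha\le C_1\int_\Omega|\nabla u_n|^q+C_2$ for $q<\alpha<2s$, close it by H\"older, and pass to the limit via the compactness of the Green operator and Vitali. The only (harmless) differences are your choice of truncation $H_n(\xi)=\min\{|\xi|^q,n\}$ in place of the paper's $\dfrac{|\xi|^q}{1+\frac1n|\xi|^q}$, and your neat shortcut of invoking Theorem \ref{key01} directly via $u_n=T_M(u_n)$, whereas the paper re-runs the Green-function representation argument of that theorem inline.
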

\begin{proof} Let $u_n$ be the unique solution to the approximating problem
\begin{equation}\label{aprox1} \left\{
\begin{array}{rcll}
(-\Delta)^s u_n &= & \dfrac{|\n u_n|^q}{1+\frac 1n |\n u_n|^q} +\l f & \text{   in }\Omega , \\  u_n &=& 0 &\hbox{  in }\mathbb{R}^N\setminus\Omega.
\end{array} \right.
\end{equation}
By the comparison principle in Theorem \ref{compa2}, it follows that $u_n\le u_{n+1}\le w$ for all $n$. Hence,  there exists $u$ such that $u_n\uparrow u$
strongly in $L^{q^*}(\O)$. Let $g_n(x)=\dfrac{|\n u_n|^q}{1+\frac 1n |\n u_n|^q} +\l f$ and define $\rho$ to be the unique solution to the problem
\begin{equation}\label{inter} \left\{
\begin{array}{rcll}
(-\Delta)^s \rho &= & 1 &\text{   in }\Omega , \\  \rho &=& 0 &\hbox{  in } \mathbb{R}^N\setminus\Omega.
\end{array} \right.
\end{equation}
Using $\rho$ as a test function in \eqref{aprox1} and since $u_n\le w$, it follows that
$$\io g_n(x)\rho\le C  \hbox{ for all }  n.$$

We claim that the sequence $\{u_n\}_n$ is bounded in $W^{1,a}_0(\O)$ for all $a<2s$. We follow closely the same ideas as in  the proof of Theorem \ref{key01}. We
have that
$$
u_n(x)=\io \mathcal{G}_s(x,y) g_n(y)dy.
$$
Hence
$$
|\n u_n(x)|\le\io |\n_x \mathcal{G}_s(x,y)|g_n(y)dy.
$$
Fix $1<\a<2s$ and define $h(x,y)=\max\bigg\{\dfrac{1}{|x-y|},\dfrac{1}{d(x)}\bigg\}$, it holds
\begin{eqnarray*}
|\n u_n(x)|^{\a} &\le & \Big(\io |\n_x \mathcal{G}_s(x,y)|g_n(y)dy\Big)^{\a}\le \Big(\io h(x,y) \mathcal{G}_s(x,y)g_n(y)dy\Big)^{\a}\\& \le & \dyle \Big(\io
(h(x,y))^{\a} \mathcal{G}_s(x,y)g_n(y)dy\Big)\Big(\io \mathcal{G}_s(x,y)g_n(y)dy\Big)^{\a-1}\\ &\le & \dyle \Big(\io (h^{\a}(x,y)
\mathcal{G}_s(x,y)g_n(y)dy\Big)u^{\a-1}_n(x)\\ &\le & \dyle \io \Big(h^{\a}(x,y) \mathcal{G}_s(x,y)g_n(y)dy\Big)w^{\a-1}(x)\\ &\le & \dyle \io \Big(h^{\a}(x,y)
\mathcal{G}_s(x,y)|\n u_n(y)|^qdy\Big)w^{\a-1}(x)+ \l \io \Big(h^{\a}(x,y) \mathcal{G}_s(x,y)f(y)dy\Big)w^{\a-1}(x).
\end{eqnarray*}
Thus
\begin{eqnarray*}
\dyle\io |\n u_n|^{\a}dx & \le & \dyle \io|\n u_n(y)|^q \Big(\io h^{\a}(x,y) \mathcal{G}_s(x,y)w^{\a-1}(x)\Big)dy\\ & + & \dyle \l \io f(y)\Big(\io
h^{\a}(x,y)\mathcal{G}_s(x,y)w^{\a-1}(x)dx\big)dy\\ &\equiv & J_1+J_2.
\end{eqnarray*}
Since $w\in L^\infty(\O)$ and $\a<2s$,  following the same computations as in the proof of Theorem \ref{key01}, we reach that
$$
J_1\le C\io|\n u_n(y)|^q \Big(\io h^{\a}(x,y) \mathcal{G}_s(x,y)dx\Big)dy\le C\io|\n u_n(y)|^q dy
$$
and
$$
J_2\le C\io f(y)dy.
$$
Therefore we conclude that
$$
\io|\n u_n(x)|^\a dx\le C_1\io|\n u_n(x)|^qdx +C_2.
$$
Choosing $\a>q$ and by H\"older inequality,  we obtain that
$$\io|\n u_n(x)|^\a dx\le C \hbox{ for all  } n.$$

As a consequence we get that $\{g_n\}_n$ is bounded in $L^{1+\e}(\O)$ for some $\e>0$. By the compactness result in Proposition \ref{key}, we obtain that, up to a
subsequence, $u_n\to u$ strongly in $W^{1,r}_0(\Omega)$ for all $r<p_*$ and $|\n u_n|\to |\n u|$ a.e.  in $\O$. Hence by Vitali lemma we reach that $u_n\to u$
strongly in $W^{1,\a}_0(\Omega)$ for all $\a<2s$, in particular, for $\a=q$. Thus $u$ is a solution to \eqref{qqA} with $u\in W^{1,\a}_0(\Omega)$ for all $a<2s$.
\end{proof}
As a consequence of the above Theorem and the construction of the supersolution at the beginning of this subsection, we get the following result.
\begin{Corollary} Assume that $q<2s$ and that $f\in L^\infty(\Omega)$ with $f\gneq 0$. Then there exists $\l^*>0$ such that for all $\l<\l^*$, problem
\eqref{qqA}
has a bounded positive solution $u$ such that $u\in W^{1,\a}_0(\Omega)$ for all $\a<2s$.
\end{Corollary}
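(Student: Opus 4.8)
The plan is to feed a supersolution built in the previous subsection into Theorem~\ref{exit1} and then to promote the resulting nonnegative solution to a positive one. Fix $q\in(1,2s)$ and $f\in L^\infty(\Omega)$ with $f\gneq0$. First I would produce, for every small enough $\lambda$, a bounded supersolution $w$ of \eqref{qq} in $\Omega$. One choice valid for the whole range is the rescaled truncated power $w_R(x)=(1-|x|^\alpha/R^\alpha)_+$ with $1<\alpha<2s$ and $R$ so large that $\overline\Omega\subset B_{Rr_0}(0)$; then on $\Omega$ one lies inside the region where the computation leading to \eqref{super1} gives $(-\Delta)^sw_R(x)\ge R^{-\alpha}C(r_0)\,|x|^{\alpha-2s}$, whereas $|\nabla w_R(x)|^q=(\alpha R^{-\alpha})^q|x|^{(\alpha-1)q}$, so the quotient $|\nabla w_R|^q/(-\Delta)^sw_R$ stays bounded on $\overline\Omega$ and carries a factor $R^{-\alpha(q-1)}\to0$. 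Enlarging $R$ therefore leaves $(-\Delta)^sw_R-|\nabla w_R|^q\ge\delta_0>0$ in $\Omega$. In the range $p_*<q<2s$ one may instead use the translated homogeneous supersolution $w(x)=A|x-x_0|^{-\alpha}$, $\alpha=\tfrac{2s-q}{q-1}\in(0,N-2s)$, $x_0\in\ren\setminus\overline\Omega$: since $\alpha+2s=(\alpha+1)q$, a direct computation gives $(-\Delta)^sw-|\nabla w|^q=(C_{N,s}(\alpha)A-\alpha^qA^q)|x-x_0|^{-(\alpha+2s)}$, bounded below by some $\delta_0>0$ on $\overline\Omega$ once $A$ is small. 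In all cases $w$ is bounded on $\overline\Omega$, nonnegative on $\ren\setminus\Omega$, and has the regularity needed in Theorem~\ref{exit1}; note that its proof uses only boundedness of $w$ and $w\ge0$ outside $\Omega$, so it is immaterial that these models need not vanish on $\partial\Omega$.

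Now set $\lambda^*:=\delta_0/\|f\|_{L^\infty(\Omega)}$, a constant depending only on $\Omega,N,s,q$ and $\|f\|_\infty$. For $\lambda<\lambda^*$ one has $(-\Delta)^sw\ge|\nabla w|^q+\delta_0\ge|\nabla w|^q+\lambda f$ in $\Omega$, i.e. $w$ is a bounded supersolution of \eqref{qq}, and Theorem~\ref{exit1} provides a solution $u$ of \eqref{qq} with $0\le u\le w$ and $u\in W^{1,\alpha}_0(\Omega)$ for all $\alpha<2s$. Since $w\in L^\infty(\Omega)$, also $u\in L^\infty(\Omega)$.

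It remains to show $u>0$ in $\Omega$. In the approximating scheme of Theorem~\ref{exit1}, $u=\lim_nu_n$ with $u_n(x)=\int_\Omega\mathcal G_s(x,y)g_n(y)\,dy$ and $g_n=\dfrac{|\nabla u_n|^q}{1+\frac1n|\nabla u_n|^q}+\lambda f\ge\lambda f\ge0$. Since $\mathcal G_s>0$ on $\Omega\times\Omega$ and $f\gneq0$, this gives $u_n(x)\ge\lambda\int_\Omega\mathcal G_s(x,y)f(y)\,dy>0$ for every $x\in\Omega$, hence $u>0$ in $\Omega$. Thus $u$ is a bounded positive solution of \eqref{qq} with $u\in W^{1,\alpha}_0(\Omega)$ for all $\alpha<2s$, which is the assertion.

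Since the pointwise evaluation of $(-\Delta)^s$ on the model functions and the sign analysis behind \eqref{super1} have already been carried out, the only real work left is organizational: matching each model supersolution to the hypotheses of Theorem~\ref{exit1}, extracting a threshold $\lambda^*$ uniform in the data, and securing strict positivity. Given Theorem~\ref{exit1}, the Corollary then follows by assembling these pieces.
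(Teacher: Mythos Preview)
Your proposal is correct and follows essentially the same route as the paper: the paper simply states that the Corollary is ``a consequence of the above Theorem and the construction of the supersolution at the beginning of this subsection,'' and you have supplied the missing organizational details. Your observation that the hypothesis $w\in W^{1,q}_0(\Omega)$ in Theorem~\ref{exit1} is not actually used in its proof (only $w\in L^\infty(\Omega)$ and $w\ge 0$ in $\ren\setminus\Omega$ are needed for the comparison with $u_n$) is correct and necessary here, since the rescaled radial barrier does not vanish on $\partial\Omega$; the paper glosses over this point. Your explicit verification of strict positivity via the Green representation is also a detail the paper leaves to the reader.
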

\subsection{ A second existence result}
Now, as in the local case, we assume that  $f\in L^\g(\O)$ for some $\g>\dfrac{N}{q'(2s-1)}$,  $q'=\dfrac{q}{q-1}$. In order to  obtain a  solution, we need some
extra condition on the supersolution. We obtain the following result.
\begin{Theorem}\label{exit2}
Assume that $f\in L^\g(\O)$ for some $\g>\frac{N}{q'(2s-1)}$. Let $w$ be a nonnegative supersolution to \eqref{qqA} such that $w\in W^{1,\s}(\O)$ for some
$q<\s\le 2s$. Suppose that the following estimate holds,
\begin{equation}\label{condi1}
\sup_{x\in \O}\io \dfrac{w^{\s-1}(x)\mathcal{G}_s(x,y)}{|x-y|^{\s}}dx\le C.
\end{equation}
Then problem \eqref{qqA} has a solution $u$ such that $u\in W^{1,\s}_0(\O)$ and $T_k(u)\in H^s_0(\O)\cap W^{1, \a}_0(\O)$ for all $\a<2s$.
\end{Theorem}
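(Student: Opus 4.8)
The scheme mirrors that of Theorems \ref{key01} and \ref{exit1}, hypothesis \eqref{condi1} replacing the bound $w\in L^\infty(\O)$ used there. For $n\in\mathbb{N}$ let $u_n$ be the unique nonnegative solution of the approximating problem
\begin{equation*}
\left\{
\begin{array}{rcll}
(-\Delta)^s u_n &= & \dfrac{|\n u_n|^q}{1+\frac1n|\n u_n|^q}+\l T_n(f) & \mbox{ in }\O,\\
u_n &=& 0 & \mbox{ in }\ren\setminus\O,
\end{array}\right.
\end{equation*}
which exists, is unique, satisfies $u_n\in W^{1,p}_0(\O)$ for $p<p_*$ and $T_k(u_n)\in H^s_0(\O)\cap W^{1,\a}(\O)$ for $\a<2s$ (cf. Corollary \ref{aplic1} and Theorem \ref{exit1}), and moreover belongs to $\mathcal{C}^{1,\a}(\O)$ for some $\a\in(0,1)$ since its right hand side is bounded (Lemma \ref{key2}). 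Putting $H_n(\xi)=|\xi|^q/(1+\frac1n|\xi|^q)$, the map $H_n$ is globally Lipschitz in $\xi$ and $H_n\le H_{n+1}\le|\xi|^q$, while $T_n(f)\le T_{n+1}(f)\le f$; hence $u_n$ is a subsolution of the $(n+1)$-th problem and $w$ is a supersolution of all of them, so the comparison principle of Theorem \ref{compa2} (with $b(x)$ constant, which belongs to every $L^\s(\O)$) gives $0\le u_n\le u_{n+1}\le w$ in $\O$. Consequently $u_n\uparrow u$ for some $u$, with $u_n\to u$ in $L^{q^*}(\O)$, and the task is to pass to the limit.

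The crucial step is the uniform bound $\io|\n u_n|^\s\,dx\le C$. From the Green representation $u_n(x)=\io\mathcal{G}_s(x,y)g_n(y)\,dy$, with $g_n=H_n(\n u_n)+\l T_n(f)\ge0$, the estimate \eqref{green2}, and H\"older's inequality with exponents $\s$ and $\s'$, one gets
\begin{equation*}
|\n u_n(x)|^\s\le C\Big(\io h^\s(x,y)\,\mathcal{G}_s(x,y)\,g_n(y)\,dy\Big)\,u_n(x)^{\s-1},\qquad h(x,y)=\max\left\{\frac1{|x-y|},\frac1{d(x)}\right\}.
\end{equation*}
Replacing $u_n(x)^{\s-1}$ by $w(x)^{\s-1}$ (using $u_n\le w$), integrating in $x$ and applying Fubini,
\begin{equation*}
\io|\n u_n|^\s\,dx\le C\io g_n(y)\,\Phi(y)\,dy,\qquad \Phi(y)=\io w^{\s-1}(x)\,h^\s(x,y)\,\mathcal{G}_s(x,y)\,dx.
\end{equation*}
Since $h^\s(x,y)\le C\big(|x-y|^{-\s}+d^{-\s}(x)\big)$, the quantity $\Phi(y)$ splits into the contribution $\io w^{\s-1}(x)|x-y|^{-\s}\mathcal{G}_s(x,y)\,dx$ — which is precisely what hypothesis \eqref{condi1} controls — and the contribution $\io w^{\s-1}(x)d^{-\s}(x)\mathcal{G}_s(x,y)\,dx$, to be estimated exactly as the term $I_2$ in the proof of Theorem \ref{key01}, that is, through the bounded solution $\rho$ of \eqref{rho} furnished by Lemma \ref{singular} together with the behaviour of $w$ near $\p\O$. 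Hence $\Phi(y)\le C$ for a.e. $y$, so that
\begin{equation*}
\io|\n u_n|^\s\,dx\le C\io g_n(y)\,dy\le C\io|\n u_n|^q\,dy+C\l\|f\|_{L^1(\O)}.
\end{equation*}
Because $q<\s$, H\"older's and Young's inequalities — using that $\io|\n u_n|^\s<\infty$ for each fixed $n$ (the right hand side being bounded, $u_n$ enjoys the boundary regularity of the linear problem) — absorb the first term on the right and yield $\io|\n u_n|^\s\,dx\le C$ uniformly in $n$. In particular $\{g_n\}_n$ is bounded in $L^1(\O)$ and $\{|\n u_n|^q\}_n$ is equi-integrable.

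To conclude, the compactness of the operator $T$ of Theorem \ref{key} gives, along a subsequence, $u_n\to u$ strongly in $W^{1,r}_0(\O)$ for every $r<p_*$ and $\n u_n\to\n u$ a.e. in $\O$; by equi-integrability and Vitali's theorem $H_n(\n u_n)\to|\n u|^q$ in $L^1(\O)$, while $T_n(f)\to f$ in $L^1(\O)$. Passing to the limit in the weak formulation shows that $u$ is a weak solution of \eqref{qq}; the uniform $W^{1,\s}$-bound and the weak lower semicontinuity of the norm give $u\in W^{1,\s}_0(\O)$; and since $u$ solves $(-\Delta)^s u=|\n u|^q+\l f$ with datum in $L^1(\O)$ (and is, by Theorem \ref{entropi}, the unique weak solution with that datum), Theorem \ref{key01} gives $T_k(u)\in H^s_0(\O)\cap W^{1,\a}_0(\O)$ for every $\a<2s$. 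Finally $u\ge u_1>0$ in $\O$ by the strong maximum principle. The real obstacle of the argument is the a priori estimate above, and more precisely the proof that both pieces of $\Phi(y)$ — above all the one carrying the weight $d^{-\s}(x)$, where only $W^{1,\s}$-information on $w$ is at hand — stay bounded uniformly in $y$.
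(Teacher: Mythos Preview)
Your overall scheme --- approximating problems, the comparison $u_n\le u_{n+1}\le w$, the Green representation, and the H\"older splitting of $|\n u_n|^\s$ --- is exactly the one the paper uses. The gap is precisely where you yourself flag it: the piece of $\Phi(y)$ carrying $d^{-\s}(x)$ is \emph{not} bounded uniformly in $y$ in general, and the analogy with $I_2$ in Theorem~\ref{key01} does not carry over. In Theorem~\ref{key01} one has no weight $w^{\s-1}$ in front of $d^{-\a}$, so the inner integral equals the bounded function $\rho$ of Lemma~\ref{singular}. Here the inner integral is
\[
\psi(y):=\io \frac{w^{\s-1}(x)}{d^{\s}(x)}\,\mathcal{G}_s(x,y)\,dx,
\]
that is, the solution of $(-\Delta)^s\psi=w^{\s-1}/d^{\s}$. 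The hypothesis $w\in W^{1,\s}(\O)$ (not $W^{1,\s}_0(\O)$) gives no vanishing of $w$ at $\partial\O$, so ``the behaviour of $w$ near $\partial\O$'' buys nothing, and in the end-point case $\s=2s$ (which the statement allows) Lemma~\ref{singular} is not even available. There is no reason to expect $\psi\in L^\infty(\O)$.

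The paper's remedy is not to seek a uniform bound on $\Phi$ but only $\psi\in L^\theta(\O)$ for $\theta$ large enough, namely $\theta>\max\{\s/(\s-q),\,N/(N-q'(2s-1))\}$. Then the boundary pieces of $J_1$ and $J_2$ are handled by H\"older:
\[
\io|\n u_n|^q\,\psi\;\le\;\Big(\io|\n u_n|^\s\Big)^{q/\s}\Big(\io\psi^{\frac{\s}{\s-q}}\Big)^{\frac{\s-q}{\s}},\qquad
\io f\,\psi\;\le\;\|f\|_{L^{N/(q'(2s-1))}}\,\|\psi\|_{L^{N/(N-q'(2s-1))}}.
\]
(This is where the assumption $f\in L^\g$ with $\g>\dfrac{N}{q'(2s-1)}$ is actually used; your reduction to $\|f\|_{L^1(\O)}$ would not be enough.) Combining with hypothesis~\eqref{condi1} for the $|x-y|^{-\s}$ piece, the paper arrives at
\[
\io|\n u_n|^\s\,dx\le C_1\io|\n u_n|^q\,dx+C_2\Big(\io|\n u_n|^\s\,dx\Big)^{q/\s}+C_3,
\]
and since $q<\s$, both right-hand terms are sublinear and are absorbed as you describe. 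The rest of your argument (compactness, Vitali, Theorem~\ref{key01} for $T_k(u)$) then goes through unchanged.
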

\begin{proof} Define $\psi$, to be the solution to problem
\begin{equation}\label{inter0} \left\{
\begin{array}{rclll}
(-\Delta)^s \psi &= & \dfrac{w^{\s-1}(x)}{d^{\s}(x)} &\text{   in }\Omega , \\  \psi &=& 0 &\hbox{  in } \mathbb{R}^N\setminus\Omega.
\end{array} \right.
\end{equation}
Therefore  $\psi\in L^\theta(\O)$ for $\theta>\max\{\frac{\a}{q-\s}, \frac{N}{N-q'(2s-1)}\}$ if $q'(2s-1)<N$, i.e.,  $q>p_*$,  and $\psi\in L^\infty(\O)$ if $q\le
p_*$.

Let $u_n$ be the unique solution to the approximating problem \eqref{aprox1}, then  $u_n\le u_{n+1}\le w$ for all $n$. Since $w\in W^{1,\s}(\O)$,  we get the
existence of $u$ such that $u_n\uparrow u$ strongly in $L^{\s^*}(\O)$. As in the proof of the  Theorem  \ref{exit1}, setting $g_n(x)=\dfrac{|\n u_n|^p}{\frac
1n+|\n u_n|^p} +\l f$, it follows that
$$
u_n(x)=\io \mathcal{G}_s(x,y) g_n(y)dy\quad \hbox{    and then     } |\n u_n(x)|\le\io |\n_x \mathcal{G}_s(x,y)|g_n(y)dy.
$$
Therefore,
\begin{eqnarray*}
|\n u_n(x)|^{\s} &\le & \Big(\io |\n_x \mathcal{G}_s(x,y)|g_n(y)dy\Big)^{\s}\le \Big(\io h(x,y) \mathcal{G}_s(x,y)g_n(y)dy\Big)^{\s}\\& \le & \dyle \Big(\io
(h(x,y))^{\a} \mathcal{G}_s(x,y)g_n(y)dy\Big)\Big(\io \mathcal{G}_s(x,y)g_n(y)dy\Big)^{\s-1}\\ &\le & \dyle \Big(\io (h^{\s}(x,y)
\mathcal{G}_s(x,y)g_n(y)dy\Big)u^{\s-1}_n(x)\\ &\le & \dyle \io \Big(h^{\s}(x,y) \mathcal{G}_s(x,y)g_n(y)dy\Big)w^{\s-1}(x)\\ &\le & \dyle \io \Big(h^{\s}(x,y)
\mathcal{G}_s(x,y)|\n u_n(y)|^qdy\Big)w^{\s-1}(x)+ \l \io \Big(h^{\s}(x,y) \mathcal{G}_s(x,y)f(y)dy\Big)w^{\s-1}(x).
\end{eqnarray*}
Thus
\begin{eqnarray*}
\dyle\io |\n u_n|^{\s} dx & \le & \dyle \io|\n u_n(y)|^q \Big(\io h^{\s}(x,y) \mathcal{G}_s(x,y)w^{\s-1}(x)dx\Big)dy\\ & + & \dyle \l \io f(y)\Big(\io
h^{\s}(x,y)\mathcal{G}_s(x,y)w^{\s-1}(x)dx\big)dy\equiv J_1+J_2.
\end{eqnarray*}
Recall that $h(x,y)=\max\{\frac{1}{|x-y|},\frac{1}{d(x)}\}$, then

\begin{eqnarray*}
J_1 & \le & \dyle \io|\n u_n(y)|^q \Big(\io \dfrac{w^{\s-1}(x)G(x,y)}{|x-y|^{\s}}dx\Big)dy+ \io|\n u_n(y)|^q \Big(\io
\dfrac{w^{\s-1}(x)G(x,y)}{d^{\s}(x)}dx\Big)dy\\ & \le & \dyle \io|\n u_n(y)|^q \Big(\io \dfrac{w^{\s-1}(x)G(x,y)}{|x-y|^{\s}}dx\Big)dy+ \io|\n u_n(y)|^q
\psi(y)dy.
\end{eqnarray*}
By using the hypothesis on $w$, we reach that
\begin{eqnarray*}
J_1 &\le & C\io|\n u_n(y)|^q dy+\Big(\io|\n u_n(y)|^\s dy\Big)^{\frac{q}{\s}}\Big(\io \psi^{\frac{\s}{q-\s}}dy\Big)^{\frac{\s-q}{\s}}\\ &\le & \dyle C_1\io|\n
u_n(y)|^q dy+C_2\Big(\io|\n u_n(y)|^\s dy\Big)^{\frac{q}{\s}}.
\end{eqnarray*}
For $J_2$, we have
\begin{eqnarray*}
J_2  \le  \dyle \io f(y)\Big(\io \dfrac{w^{\s-1}(x)G(x,y)}{|x-y|^{\s}}dx\Big)dy+ \io f(y)\psi(y)dy.
\end{eqnarray*}
Hence
$$
J_2 \le  C\io f(y)dy+\Big(\io f^{\frac{N}{q'(2s-1)}}(y)dy\Big)^{\frac{q'(2s-1)}{N}}\Big(\io \psi^{\frac{N}{N-q'(2s-1)}}(y) dy\Big)^{\frac{N-q'(2s-1)}{N}}\le  C.
$$
Thus
$$
\io|\n u_n(x)|^\s dx\le C_1\io|\n u_n(x)|^qdx +C_2\Big(\io|\n u_n(y)|^\a dy\Big)^{\frac{q}{\a}}+C_3.
$$
Since $\s>q$, using H\"older inequality, it holds
$$\io|\n u_n(x)|^\s dx\le C \hbox{  for all }  n.$$
 Hence, up to a subsequence, $u_n\rightharpoonup u$ weakly in $W^{1,\s}_0(\O)$. By the compactness result in Proposition \ref{key},  up to a subsequence, we
 obtain that $|\n u_n|\to |\n u|$ a.e.  in $\O$. Hence by Vitali lemma, taking into account that $q<\s$, we reach that $u_n\to u$ strongly in $W^{1,q}_0(\O)$.
 Thus $u$ is a solution to \eqref{qqA} with $u\in W^{1,\s}_0(\O)$.  Define $F\equiv |\n u|^q+\l f$, then $F\in L^1(\O)$, hence using Theorem \ref{key01} it holds
 that $T_k(u)\in H^s_0(\O)\cap W^{1, \s}_0(\O)$ for all $\s<2s$. Hence we conclude.
 \end{proof}
As a consequence of the  Theorem \ref{exit2}, we get the following application  in  a concrete case.
\begin{Theorem}\label{exit3}
Assume that $p_*<q<2s$ and let $f(x)=\dfrac{1}{|x|^\theta}$ with $0<\theta<q'(2s-1)<N$. Then there exists $\l^*$ such that for all $\l<\l^*$, problem \eqref{qqA}
has a solution $u$ with $u\in W^{1,\s}_0(\O)$ for $q<\s<\min\{\frac{N}{\theta-2s+1}, 2s\}$ and $T_k(u)\in H^s_0(\O)\cap W^{1, \a}_0(\O)$ for all $\a<2s$.
\end{Theorem}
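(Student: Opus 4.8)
The plan is to deduce Theorem \ref{exit3} from Theorem \ref{exit2}; the only things that need to be done are to check the two hypotheses of that theorem for the datum $f(x)=|x|^{-\theta}$ — the integrability of $f$ and the existence of an admissible supersolution — and then to read off the regularity of the resulting solution. For the integrability, note that $\theta<q'(2s-1)<N$ gives $\tfrac{N}{q'(2s-1)}<\tfrac{N}{\theta}$, so one may fix $\gamma\in\big(\tfrac{N}{q'(2s-1)},\tfrac{N}{\theta}\big)$; then $\theta\gamma<N$ forces $|x|^{-\theta}\in L^{\gamma}(\Omega)$, which is exactly the hypothesis $f\in L^{\gamma}(\Omega)$ with $\gamma>\tfrac{N}{q'(2s-1)}$ required in Theorem \ref{exit2}. (Also $q<\min\{\tfrac{N}{\theta-2s+1},2s\}$ so that the target interval for $\sigma$ is nonempty; this is where $q>p_*$ is used.)

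For the supersolution I would take a radial power barrier $w(x)=A|x|^{-\beta}$ defined on all of $\mathbb{R}^N$ — so that $w>0=$ the exterior data of \eqref{qq} in $\mathbb{R}^N\setminus\Omega$ automatically, and $w$ is smooth away from the origin, which removes any boundary-layer difficulty — with $\beta=\max\{0,\theta-2s\}$, taken strictly positive and small when $\theta\le 2s$. Since $(-\Delta)^s w=C_{N,s}(\beta)A|x|^{-\beta-2s}$ for $0<\beta<N-2s$ and $|\nabla w|=A\beta|x|^{-\beta-1}$, the inequality $(-\Delta)^s w\ge|\nabla w|^q+\lambda|x|^{-\theta}$ in $\Omega$ reduces, near the origin, to the exponent conditions $\beta(q-1)\le 2s-q$ and $\beta+2s\ge\theta$, all remaining terms being bounded and absorbed after choosing $A$ and $\lambda$ small; the key point is that $\theta-2s<\tfrac{2s-q}{q-1}$ — a rewriting of $\theta<q'(2s-1)$, which moreover forces $\beta<N-2s$ — guarantees such a $\beta$ exists. (When $\theta<2s-1$ one may instead use the bounded barrier $A(1-|x|^{\alpha})_+$, $\alpha\in(1,2s)$, built earlier in this section.)

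Power counting on $|\nabla w|\sim|x|^{-\beta-1}$ shows $w\in W^{1,\sigma}(\Omega)$ precisely for $\sigma<\tfrac{N}{\beta+1}$, and with $\beta=\max\{0,\theta-2s\}$ together with the constraint $\sigma\le 2s$ of Theorem \ref{exit2} this is the range $q<\sigma<\min\{\tfrac{N}{\theta-2s+1},2s\}$ of the statement. The main obstacle is the weighted estimate \eqref{condi1}, $\sup_{y\in\Omega}\int_{\Omega}\tfrac{w^{\sigma-1}(x)\mathcal{G}_s(x,y)}{|x-y|^{\sigma}}\,dx\le C$: I would split the $x$-integral into the region near the origin, the region $|x-y|\le d(x)$, the region near $\partial\Omega$, and the bulk, bounding $\mathcal{G}_s(x,y)$ in each by the appropriate term of Lemma \ref{estimmm} — $\tfrac{1}{|x-y|^{N-2s}}$ in the interior, $\tfrac{d^s(x)}{|x-y|^{N-s}}$ or $\tfrac{d^s(y)}{|x-y|^{N-s}}$ near the boundary. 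In the interior the integral is a Riesz potential of the weight $|x|^{-\beta(\sigma-1)}$ against the kernel $|x-y|^{-(N-2s+\sigma)}$, so Lemma \ref{ineq1}, together with $\sigma<2s$ (integrability at $x=y$) and $\beta(\sigma-1)<N$ (integrability at $x=0$), controls it uniformly in $y$; the boundary pieces are estimated in the same way, the extra $d^s$-factors compensating for the profile of $\mathcal{G}_s$ there. This is the step where the admissible range of $\sigma$ is finally pinned down and where the full strength of the Green-function bounds, not just $\mathcal{G}_s\le C|x-y|^{2s-N}$, is needed.

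Once both hypotheses are in hand, Theorem \ref{exit2} delivers a solution $u$ of \eqref{qq} with $u\in W^{1,\sigma}_0(\Omega)$; and since $F:=|\nabla u|^q+\lambda f\in L^1(\Omega)$, Theorem \ref{key01} yields $T_k(u)\in H^s_0(\Omega)\cap W^{1,\alpha}_0(\Omega)$ for every $\alpha<2s$, which completes the proof.
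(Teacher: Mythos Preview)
Your strategy is exactly the paper's: verify $f\in L^{\gamma}(\Omega)$ for some $\gamma>\tfrac{N}{q'(2s-1)}$, build the radial supersolution $w(x)=c|x|^{-(\theta-2s)}$ (the paper simply assumes $\theta>2s$ without loss of generality rather than writing $\beta=\max\{0,\theta-2s\}$), check $w\in W^{1,\sigma}(\Omega)$ for the stated range of $\sigma$, verify \eqref{condi1}, and invoke Theorem~\ref{exit2} and Theorem~\ref{key01}.

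Two points where your write-up diverges from the paper and should be adjusted. First, your treatment of \eqref{condi1} is more involved than necessary, and the claim that ``the full strength of the Green-function bounds, not just $\mathcal{G}_s\le C|x-y|^{2s-N}$, is needed'' is false: the paper uses only the crude bound $\mathcal{G}_s(x,y)\le C|x-y|^{-(N-2s)}$ and a single application of H\"older, the key observation being that for $\sigma<\tfrac{\theta}{\theta-2s+1}$ one has $(\theta-2s)(\sigma-1)<2s-1$, so $|x|^{-(\theta-2s)(\sigma-1)}\in L^{\gamma}(\Omega)$ for some $\gamma>\tfrac{N}{2s-1}$ while $|x-y|^{-(N-2s+\sigma)}\in L^{\gamma'}(\Omega)$ uniformly in $y$. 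No splitting into boundary layers is required, and the boundary decay of $\mathcal{G}_s$ plays no role here (the weight $w^{\sigma-1}$ is bounded near $\partial\Omega$). Second, the paper separately verifies that the auxiliary function $\psi$ solving $(-\Delta)^s\psi=\tfrac{w^{\sigma-1}}{d^{\sigma}}$ lies in $L^{\infty}(\Omega)$, by splitting $\Omega$ into a ball around the origin (where $(\theta-2s)(\sigma-1)<2s$ gives $L^m$-data with $m>\tfrac{N}{2s}$) and its complement (where Lemma~\ref{singular} applies); this step is implicit in the proof of Theorem~\ref{exit2} and you should not skip it.
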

\begin{proof} From Theorem \ref{exit2}, we have just to build a supersolution $w$ such that $w\in W^{1,\s}(\O)$ for some $q<\s\le 2s$. It is clear that $f\in
L^\g(\O)$ for some $\g>\frac{N}{q'(2s-1)}$. Without loss of generality, we can assume that $\theta>2s$. Define $w_1(x)=\dfrac{1}{|x|^{\theta-2s}}$, then
$$(-\Delta)^s w_1=\frac{C}{|x|^\theta}\ge C_1(\O)|\n w_1|^q+C_2 f \mbox{  in   }\O.$$
Hence setting $w=cw_1$, we reach that, for small $\l$,
$$(-\Delta)^s w\ge |\n w|^q+\l f \mbox{  in   }\O. $$
It is clear that $w\in W^{1, \beta}(\O)$ for all $\beta<\frac{N}{\theta-2s+1}$. Since $q>p_*$, then $q<\frac{\theta}{\theta-2s+1}<\frac{N}{\theta-2s+1}$. Hence
there exists $q<\s<\min\{\frac{N}{\theta-2s+1}, 2s\}$ such that $w\in W^{1, \s}(\O)$. Moreover,  condition \eqref{condi1} holds.  Indeed,
$$\io \dfrac{w^{\s-1}(x)G(x,y)}{|x-y|^{\s}}dx\le C\io \dfrac{1}{|x|^{(\theta-2s)(\s-1)}|x-y|^{N-2s+\a_0}}dx.$$
Since $0<\theta<q'(2s-1)$, then we can choose $q<\s<2s$ such that $\s<\frac{\theta}{\theta-2s+1}$. That is, $\dfrac{1}{|x|^{(\theta-2s)(\s-1)}}\in L^\g(\O)$ for
some $\g>\frac{N}{2s-1}$. Therefore, using H\"older inequality, we obtain  that
$$\io \dfrac{w^{\s-1}(x)G(x,y)}{|x-y|^{\s}}dx\le C\Big(\io \dfrac{1}{|x|^{(\theta-2s)(\s-1)}})^\sigma dx\Big)^{\frac{1}{\g}}
\Big(\io \dfrac{1}{|x-y|^{\s'(N-2s+\s)}}dx\Big)^{\frac{1}{\g'}}.
$$
Since $\g'(N-2s+\s)<N$, then
$$\io \dfrac{w^{\s-1}(x)G(x,y)}{|x-y|^{\s}}dx\le C.$$
Define now $\psi$ to be the unique solution to the problem
\begin{equation}\label{inter00} \left\{
\begin{array}{rcll}
(-\Delta)^s \psi &= & \dfrac{w^{\s-1}(x)}{d^{\s}(x)}=\dfrac{1}{|x|^{(\theta-2s)(\s-1)}d^{\s}(x)} & \text{   in }\Omega , \\  \psi &=& 0 & \hbox{  in }
\mathbb{R}^N\setminus\Omega.
\end{array} \right.
\end{equation}
Since $(\theta-2s)(\s-1)<2s$, we  prove that $\psi\in L^\infty(\O)$.  Fix $B_r(0)\subset\subset \O$ and taking $\psi_1$ and $\psi_2$, the solutions to problems
\begin{equation}\label{psi1} \left\{
\begin{array}{rcll}
(-\Delta)^s \psi_1 &= & \dfrac{1}{|x|^{(\theta-2s)(\s-1)}d^{\s}(x)}\chi_{B_r(0)} &\text{   in }\Omega , \\  \psi_1 &=& 0 &\hbox{  in }
\mathbb{R}^N\setminus\Omega,
\end{array} \right.
\end{equation}
and
\begin{equation}\label{psi2} \left\{
\begin{array}{rcll}
(-\Delta)^s \psi_2 &= & \dfrac{1}{|x|^{(\theta-2s)(\s-1)}d^{\s}(x)}\chi_{\{\O\backslash B_r(0)\}} &\text{   in }\Omega , \\  \psi_2 &=& 0 &\hbox{  in }
\mathbb{R}^N\setminus\Omega,
\end{array} \right.
\end{equation}
it is clear that $\psi=\psi_1+\psi_2$.  Since $(\theta-2s)(\s-1)<2s$, then there exists $\s_1>\frac{N}{2s}$ such that
$\dfrac{1}{|x|^{(\theta-2s)(\s-1)}d^{\s}(x)}\chi_{B_r(0)}\in L^{\s_1}(\O)$. Therefore, $\psi_1\in L^\infty(\O)$. Respect to $\psi_2$, it is clear that
$\dfrac{1}{|x|^{(\theta-2s)(\s-1)}d^{\s}(x)}\chi_{\{\O\backslash B_r(0)\}}\le \dfrac{C}{d^{\s}(x)}$. Then, since $\s<2s$, using similar arguments as in the proof
of Lemma \ref{singular}, we reach that $\psi_2\in L^\infty(\O)$. Thus $\psi\in L^\infty(\O)$ and the claim follows.

Hence we conclude that all conditions of Theorem \ref{exit2} hold and therefore there exists $u$ a solution to problem \eqref{qqA} with $u\in W^{1,\s}_0(\O)$ and
$T_k(u)\in H^s_0(\O)\cap W^{1, \a}_0(\O)$ for all $\a<2s$.
\end{proof}
\begin{remark}
We do not reach the extremal case $\a=2s$. It is clear that by the previous monotonicity  method we can not reach the case $q\ge 2s$. However, in the next section
and using some arguments from  Potential  Theory we will show the existence of a solution if $q\ge 2s$.
\end{remark}
\section{Existence result using potential theory}\label{sec5}
In this section we will complete the above existence results for all $q>p_*$. We will use some techniques from potential theory. The  key is to construct  a
suitable supersolution using hypotheses on $f$ that allow us to use \textit{ potential theory estimates}.   In \cite{HMV} the authors prove the existence of
solution under potential type hypothesis on $f$ and for any $q\ge 1$ in the local setting. The hypothesis on $f$ is equivalent to  the condition \eqref{CC}. This
type of arguments was also used in \cite{BC} for $s=1$,  for   some potentials  instead of a gradient term.

In what follows we will assume that $f\in L^m(\O)$ with $m>\frac{N}{q'(2s-1)}$ and  then according with  the ideas of \cite{BC} and \cite{HMV}, we will build a
suitable supersolution to problem \eqref{qqA} in the whole space $\ren$ under natural conditions on $f$.

Consider the Riez potential $J_{(N-\a)}$ defined in Lemma \ref{ineq1}.  We call $I_\alpha=J_{(N-\a)}$, that is
  $$I_\a(g)(x)=\int_{\ren}\dfrac{g(y)}{|x-y|^{N-\a}}dy.$$
  Notice that if  $0<\a<2$, then $I_\a=((-\D)^{\frac{\a}{2}})^{-1}$ and $G_\a(x,y)\equiv \dfrac{1}{|x-y|^{N-\a}}$ is a constant multiple of the fundamental
  solution associated to the operator $(-\D)^{\frac{\a}{2}}$.

For $f\in L^m(\Omega)$, we consider its extension by $0$ to the whole $\mathbb{R}^N$, namely,
\begin{equation}\label{ff0}
f_0(x)= \left\{
\begin{array}{rcll}
f(x) & \mbox{  if } & x\in \Omega , \\  0 &\mbox{  if } &  x\in \mathbb{R}^N\setminus\Omega.
\end{array} \right.
\end{equation}
If $q$ is the exponent in the problem \eqref{qqA}, we define
$$F_0(x)=\Big(I_{2s-1}(f_0)(x)\Big)^q.$$
Then the key hypothesis on $f$ is that  the inequality
\begin{equation}\label{m00}
I_{2s-1}(F_0)\le C_1I_{2s-1}(f_0)
\end{equation}
holds.

The first  result of this Section is the following.
\begin{Theorem}\label{super}
Assume that the hypothesis \eqref{m00} holds, then problem \eqref{qqA} has a positive supersolution $\bar{u}$ such that $\bar{u}\in D^{1, \gamma}(\ren)$ where
$\gamma=\frac{mN}{N-m(2s-1)}\ge q$. In particular $u\in W^{1,q}_{loc}(\ren)$.
\end{Theorem}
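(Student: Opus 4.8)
The plan is to realise $\bar u$ as a Riesz potential and to reduce the nonlinear differential inequality to a pointwise inequality for its density. Since $0<2s<N$, the operator $(-\Delta)^s$ is inverted on $\ren$ by the Riesz potential $I_{2s}=J_{N-2s}$, and differentiating the kernel $|x-y|^{-(N-2s)}$ produces a kernel bounded by a multiple of $|x-y|^{-(N-2s+1)}=|x-y|^{-(N-(2s-1))}$ (the whole-space version of the gradient estimate in Lemma \ref{estimmm}), so that
$$
|\n I_{2s}(g)(x)|\le C_0\, I_{2s-1}(g)(x)\qquad\hbox{for every }g\ge 0 .
$$
Hence it suffices to produce a nonnegative $g$ on $\ren$ with
$$
g\ \ge\ C_0^{\,q}\big(I_{2s-1}(g)\big)^{q}+\l f_0\qquad\hbox{in }\ren :
$$
then $\bar u:=I_{2s}(g)$ is positive, solves $(-\Delta)^s\bar u=g$ in the weak sense, and satisfies $(-\Delta)^s\bar u\ge |\n\bar u|^{q}+\l f_0\ge |\n\bar u|^{q}+\l f$ in $\O$ with $\bar u\ge 0$ in $\ren\setminus\O$, i.e. $\bar u$ is a supersolution of \eqref{qq}.

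To build such a $g$ I would iterate, in the spirit of the potential-theoretic constructions of \cite{HMV,BC}. Set $h:=I_{2s-1}(f_0)$, so that the key hypothesis \eqref{m00} reads $I_{2s-1}(h^{q})\le C_1 h$. Put $g_0:=\l f_0$ and $g_{n+1}:=C_0^{\,q}\big(I_{2s-1}(g_n)\big)^{q}+\l f_0$. Since $I_{2s-1}$ is order preserving, $(g_n)_n$ is nondecreasing; an induction using \eqref{m00} gives $I_{2s-1}(g_n)\le a_n h$, where $a_0=\l$ and $a_{n+1}=C_1C_0^{\,q}a_n^{q}+\l$. Because $q>1$, for $\l$ small the map $t\mapsto C_1C_0^{\,q}t^{q}+\l$ has a smallest positive fixed point $t_*$, and then $(a_n)_n$ is nondecreasing and bounded by $t_*$, so $a_n\uparrow t_*$. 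By monotone convergence $g_n\uparrow g_\infty$ with $I_{2s-1}(g_\infty)\le t_* h<\infty$ a.e., and passing to the limit in the recursion yields $g_\infty=C_0^{\,q}\big(I_{2s-1}(g_\infty)\big)^{q}+\l f_0$; thus $g_\infty$ satisfies the required inequality with equality.

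It then remains to collect the integrability facts and conclude. By the Hardy--Littlewood--Sobolev inequality (Lemma \ref{ineq1}$(b)$) one has $h=I_{2s-1}(f_0)\in L^{\g}(\ren)$ with $\g=\tfrac{mN}{N-m(2s-1)}$, and the inequality $\g\ge q$ follows by elementary exponent arithmetic from $m>\tfrac{N}{q'(2s-1)}$ together with $q>p_*$ (the latter forces $q'(2s-1)<N$). From $g_\infty\le C_0^{\,q}t_*^{q}h^{q}+\l f_0$, the decay $h(x)\le C(1+|x|)^{-(N-2s+1)}$ at infinity (valid since $f_0$ has compact support) and $q>p_*$ give $h^{q}\in L^{1}(\ren)$, so $g_\infty\in L^{1}(\ren)$ and $\bar u=I_{2s}(g_\infty)$ is a well-defined positive function with $(-\Delta)^s\bar u=g_\infty$ weakly. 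Finally $|\n\bar u|\le C_0 I_{2s-1}(g_\infty)\le C_0 t_*\, h\in L^{\g}(\ren)$ gives $\bar u\in D^{1,\g}(\ren)$, and $\g\ge q$ together with boundedness of $\O$ yields $\bar u\in W^{1,q}_{loc}(\ren)$.

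The main obstacle is twofold. First, one must justify carefully the pointwise bound $|\n I_{2s}(g)|\le C_0\,I_{2s-1}(g)$, through differentiation under the integral sign and the whole-space kernel estimate, since this is exactly what turns the PDE inequality into the scalar fixed-point inequality for $g$. Second, there is the bookkeeping of the iteration: a.e.\ finiteness, monotonicity and boundedness of both $(g_n)_n$ and $(a_n)_n$, the existence of the fixed point $t_*$ for $\l$ small, and the passage to the limit by monotone convergence both inside $I_{2s-1}$ and in the nonlinear term $(\,\cdot\,)^{q}$. Everything else — the Hardy--Littlewood--Sobolev step, the exponent arithmetic giving $\g\ge q$, and the identification $(-\Delta)^s I_{2s}(g_\infty)=g_\infty$ — is routine once $g_\infty\in L^{1}(\ren)$ is in hand.
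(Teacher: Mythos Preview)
Your argument is correct, but it is more elaborate than what the paper actually does here. The paper's proof of this theorem is a one-shot ansatz rather than an iteration: it simply sets $g=\l f_0+C_2F_0$ with $F_0=(I_{2s-1}(f_0))^q=h^q$, so that $\bar u=I_{2s}(g)$ satisfies $|\n\bar u|\le (N-2s)\,I_{2s-1}(\l f_0+C_2F_0)\le (N-2s)(\l+C_2C_1)\,h$ by a single application of \eqref{m00}, and then rescales $\hat u=a\bar u$ to absorb the constants. Your iterative scheme $g_{n+1}=C_0^{\,q}(I_{2s-1}(g_n))^q+\l f_0$ with the coupled scalar recursion $a_{n+1}=C_1C_0^{\,q}a_n^{q}+\l$ is instead the mechanism the paper reserves for the \emph{next} result, Theorem~\ref{super00}, where the goal is to obtain an actual solution on all of $\ren$ (there the iteration is written on $u_k$ rather than on $g_k$, but it is the same idea). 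What you gain is a limit $g_\infty$ that solves the fixed-point equation exactly, at the cost of the monotone-convergence bookkeeping; what the paper's direct ansatz buys is brevity, since for a mere supersolution a single use of \eqref{m00} already closes the inequality. Both routes end with the same gradient bound $|\n\bar u|\le C\,I_{2s-1}(f_0)$ and hence the same $D^{1,\gamma}$ conclusion via Lemma~\ref{ineq1}.
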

\begin{proof} For the precise definition of the space $D^{1, \gamma}(\ren)$, the reader can consult Section 8.2 in \cite{Lieb-Loss}.
Assume that \eqref{m00} holds and define $\bar{u}=u_1+ u_2$ where $u_1, u_2$ solve the problems
\begin{equation}\label{prob1}
(-\Delta)^s u_1= \l f_0 \text{   in }\ren, (-\Delta)^s u_2= C_2 F_0(x)\text{   in }\ren,\quad \hbox{ respectively.}
\end{equation}
It is clear that
$$
\bar{u}(x)=I_{2s}(\l f_0+C_2 F_0)(x),
$$
thus
$$
|\n\bar{u}|\le (N-2s)\Big(I_{2s-1}(\l f_0+C_2 F_0)\Big).
$$
Hence using \eqref{m00}, we reach that
$$
|\n\bar{u}|^q\le \Big((N-2s)(\l+C_2C_1)\Big)^q\Big(I_{2s-1}(f_0)\Big)^q\le \Big((N-2s)(\l+C_2C_1)\Big)^q\, F_0.
$$
Therefore we conclude that
$$
(-\Delta)^s \bar{u}=\l f_0+C_2 F_0\ge \l f_0+\frac{C_2|\n\bar{u}|^q}{\Big((N-2s)(\l+C_2C_1)\Big)^q}.
$$
Let $\hat{u}=a\bar{u}$, where $a=\frac{C_2^{\frac{1}{q-1}}}{\Big((N-2s)(\l+C_2C_1)\Big)^{\frac{q}{q-1}}}$, then
\begin{equation}\label{TTT}
(-\Delta)^s \hat{u}\ge |\n\hat{u}|^q+ \l^* f_0 \mbox{  with  }\l^*= a\l.
\end{equation}
Since $|\n \hat{u}|\le CI_{2s-1}(f_0)$,    $f_0\in L^m(\ren)$  with  $m>\frac{N}{q'(2s-1)}$ and $q>p_*$, by using Lemma \ref{ineq1} and the Dominated Convergence
Theorem,  it follows that $|\n \hat{u}|\in L^\gamma(\ren)$ where $\gamma=\frac{mN}{N-m(2s-1)}\ge q$. Hence the result follows.
\end{proof}
In the next proposition we analyze  the regularity of the solution $\hat{u}$ obtained above.
\begin{proposition}\label{propo1}
Let $\hat{u}$ the supersolution obtained in Theorem \ref{super}.
\begin{enumerate}
\item If $m>\frac{N}{2s}$, then $\hat{u}\in L^\infty(\ren)$. \item If $\frac{N}{q'(2s-1)}<m\le \frac{N}{2s}$, then $\hat{u}\in L^{\frac{mN}{N-2sm}}(\ren)$,
    in
    particular for any $\alpha$ verifying $$1<\a<\a_0\equiv \frac{N}{N-m(2s-1)}<2s,$$ and for any bounded domain $\Omega$, we have
\begin{equation}\label{RA}
\sup_{\{y\in \O\}}\io \frac{\hat{u}^{\a-1}(x)}{|x-y|^{N-2s+\a}}dx\le C.
\end{equation}
\end{enumerate}
\end{proposition}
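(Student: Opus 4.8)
The plan is to notice that, by the key hypothesis \eqref{m00} together with the composition property of Riesz potentials, the supersolution $\hat u$ is dominated by a \emph{single} Riesz potential of $f_0$; once this reduction is made, both assertions follow from the mapping properties recorded in Lemma \ref{ineq1}.

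First I would set up the reduction. Recall from the proof of Theorem \ref{super} that $\hat u = a\bigl(\lambda\, I_{2s}(f_0) + C_2\, I_{2s}(F_0)\bigr)$ with $F_0 = \bigl(I_{2s-1}(f_0)\bigr)^q$. Since $0 < 2s-1 < 1$ and $1 + (2s-1) = 2s < N$, the Riesz composition identity gives $I_{2s} = I_1 \circ I_{2s-1}$, so applying the positive operator $I_1$ to the pointwise inequality \eqref{m00} yields $I_{2s}(F_0) = I_1\bigl(I_{2s-1}(F_0)\bigr) \le C_1\, I_1\bigl(I_{2s-1}(f_0)\bigr) = C_1\, I_{2s}(f_0)$. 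Hence there is a constant $C>0$ with
$$0 \le \hat u(x) \le C\, I_{2s}(f_0)(x) = C\int_{\ren} \frac{f_0(y)}{|x-y|^{N-2s}}\, dy, \qquad x \in \ren .$$
Moreover $f_0$ is supported in $\overline{\Omega}$, so $f_0 \in L^1(\ren) \cap L^m(\ren)$, and $m > \tfrac{N}{q'(2s-1)} > 1$ because $q > p_*$ forces $q'(2s-1) < N$.

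For part $(1)$, assuming $m > \tfrac{N}{2s}$, I would split $I_{2s}(f_0)(x)$ over $B_1(x)$ and its complement: on $B_1(x)$, Hölder's inequality bounds the contribution by $\|f_0\|_{L^m(\ren)}\bigl(\int_{B_1(0)} |z|^{-(N-2s)m'}\, dz\bigr)^{1/m'}$, which is finite precisely because $(N-2s)m' < N \Leftrightarrow m > \tfrac{N}{2s}$, while on the complement the kernel is $\le 1$ so the contribution is $\le \|f_0\|_{L^1(\ren)}$; both bounds being uniform in $x$, we get $\hat u \in L^\infty(\ren)$. For part $(2)$, assuming $\tfrac{N}{q'(2s-1)} < m < \tfrac{N}{2s}$, I would apply Lemma \ref{ineq1}$(b)$ with $\lambda = N-2s$, $p = m$ to get $I_{2s}(f_0) \in L^{q_0}(\ren)$ with $\tfrac1{q_0} = \tfrac1m - \tfrac{2s}{N}$, i.e. $q_0 = \tfrac{mN}{N-2sm}$, hence $\hat u \in L^{mN/(N-2sm)}(\ren)$ (the endpoint $m=\tfrac N{2s}$ being degenerate: there $I_{2s}(f_0)$ still lies in every $L^p_{\mathrm{loc}}$ with $p<\infty$, which suffices for what follows). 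Then, for \eqref{RA}, I would fix $1 < \alpha < \alpha_0 = \tfrac{N}{N-m(2s-1)}$ — noting that $\alpha_0 < 2s$ is equivalent to $m < \tfrac{N}{2s}$, so the range is nonempty — and, using that $\Omega$ is bounded, choose an exponent $p_1$ slightly below $\tfrac{mN}{(N-2sm)(\alpha-1)}$ so that simultaneously $\hat u^{\alpha-1}\in L^{p_1}(\Omega)$ and $(N-2s+\alpha)p_1'<N$, the latter being possible exactly because $\alpha<\alpha_0$. Hölder's inequality
$$\int_\Omega \frac{\hat u^{\alpha-1}(x)}{|x-y|^{N-2s+\alpha}}\, dx \le \|\hat u^{\alpha-1}\|_{L^{p_1}(\Omega)}\Bigl(\int_\Omega |x-y|^{-(N-2s+\alpha)p_1'}\, dx\Bigr)^{1/p_1'}$$
then bounds the right-hand side uniformly in $y \in \Omega$ (estimate the last integral on a fixed ball containing $\Omega - y$), which gives \eqref{RA}.

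I expect the only genuine step to be the reduction: once \eqref{m00} is promoted to $I_{2s}(F_0)\le C_1 I_{2s}(f_0)$ via the composition formula, $\hat u$ is comparable to a single Riesz potential and everything else is routine. The part demanding a little care is purely the book-keeping of exponents — checking that each constraint arising along the way ($(N-2s)m'<N$ in $(1)$; $\tfrac1m>\tfrac{2s}{N}$, the admissibility of $p_1$, and $(N-2s+\alpha)p_1'<N$ in $(2)$ and \eqref{RA}) is consistent with the standing hypotheses $q>p_*$ and $\tfrac{N}{q'(2s-1)}<m\le\tfrac{N}{2s}$, each constraint collapsing, after simplification, onto one of these.
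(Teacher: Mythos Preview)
Your argument is correct and is cleaner than the paper's. The key difference is the opening reduction: you exploit the semigroup property $I_1\circ I_{2s-1}=c\,I_{2s}$ together with the standing hypothesis \eqref{m00} to collapse $\hat u=a(\lambda I_{2s}(f_0)+C_2 I_{2s}(F_0))$ to a single bound $\hat u\le C\,I_{2s}(f_0)$, after which both parts are one-line applications of Lemma~\ref{ineq1} and H\"older. The paper, by contrast, does \emph{not} invoke \eqref{m00} in this proposition at all: it treats $u_1=I_{2s}(\lambda f_0)$ and $u_2=I_{2s}(C_2F_0)$ separately, first computing the Lebesgue integrability of $F_0=(I_{2s-1}(f_0))^q$ from that of $f_0$, then pushing it through $I_{2s}$, and finally running the same H\"older splitting you use on each piece $K_1$, $K_2$. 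What your approach buys is brevity and a single chain of exponent checks; what the paper's approach buys is independence from hypothesis \eqref{m00} for this particular regularity statement (it uses only $f_0\in L^m$), which is a slightly stronger conclusion though not one the paper needs later. Your exponent verification that the admissible window for $p_1$ is nonempty precisely when $\alpha<\alpha_0$ is exactly the computation the paper performs implicitly for each of $K_1$ and $K_2$.
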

\begin{proof}  Consider $u_1$ and $u_2$ defined in the proof of Theorem \ref{super}.

Assume first that $m>\frac{N}{2s-1}$, then we get easily that $u_1\in L^\infty(\ren)$. Since
$$ F_0(x)=\big(\io \frac{f_0(y)}{|x-y|^{N-2s+1}}dy\big)^q,$$ using H\"older inequality, we reach that $F_0\in L^\infty(\ren)$.  Moreover, as a consequence of
Lemma \ref{ineq1}, $F_0\in L^1(\ren)$. Thus $F_0\in L^1(\ren)\cap L^\infty(\ren)$. Hence $u_2\in L^\infty(\ren)$ and then the result follows in this case.

If $\frac{N}{2s}<m\le \frac{N}{2s-1}$  then $u_1\in L^\infty(\ren)$.

Respect to $u_2$, we have $F_0\in L^{\gamma_1}(\ren)$ with $\gamma_1=\frac{mN}{q(N-m(2s-1)}$.
 A direct computation shows   that for $\frac{N}{2s}<m\le \frac{N}{2s-1}$, we have $\gamma_1>\frac{N}{q}$.

 Using again Lemma \ref{ineq1}, it holds that $F_0\in L^{\frac{2N}{N+2s}}(\ren)$. Thus $F_0\in L^{\sigma}(\ren)$ for all $\sigma \in [\frac{2N}{N+2s},
 \gamma_1]$.
 Hence $u_2\in L^\infty(\ren)$ and then $\hat{u}\in L^\infty(\ren)$.

 Let consider the hypothesis $(2)$, namely, $\frac{N}{q'(2s-1)}<m\le \frac{N}{2s}$.

 Since $|\n \hat{u}|\in L^\gamma(\ren)$ with $\gamma=\frac{mN}{N-m(2s-1)}$, by using the Sobolev inequality we conclude that $\hat{u}\in
 L^{\frac{mN}{N-2sm}}(\ren)$.

Let us prove now inequality \eqref{RA}. Fix $1<\a<\a_0$ and define
$$
K_1(y)=\io \frac{u_1^{\a-1}(x)}{|x-y|^{N-2s+\a}}dx\qquad\mbox{  and  } \qquad K_2(y)=\io \frac{u_2^{\a-1}(x)}{|x-y|^{N-2s+\a}}dx.
$$
Using H\"older's  inequality, we obtain
$$
K_i(y)\le \Big(\io u^{(\a-1)\s}_i(x)dx\Big)^{\frac{1}{\s}}\Big(\io \frac{1}{|x-y|^{(N-2s+\a)\s'}}dx\Big)^{\frac{1}{\s'}}\mbox{  for  }i=1,2.
$$
Since $\O$ is a bounded domain, then $\dyle\io \frac{1}{|x-y|^{(N-2s+\a)\s'}}dx\le C(\O)$ if $(N-2s+\a)\s'<N$, that means $\s>\frac{N}{2s-\a}$. Since $\a<\a_0$,
we can find $\s>\frac{N}{2s-\a}$ such that $(\a-1)\s\le \frac{mN}{N-2sm}$. Hence $K_1(y)\le C(u_1,\O)$ for all $y\in \O$.

We deal now with $K_2$.  By Lemma \ref{ineq1}, we get $F_0\in L^\theta(\ren)$ for $\theta=\frac{mN}{q(N-m(2s-1))}$. Therefore, using again Lemma \ref{ineq1}, we
find that $u_2\in L^{\theta_1}(\ren)$ where $\theta_1=\frac{mN}{q(N-m(2s-1))-2sm}$. Since $\O$ is bounded and  $\a<\a_0$, we can find a $\sigma$ such that
$\s>\frac{N}{2s-\a}$ and $\theta_1>(\a-1)\s$.  Hence
$$\io u^{(\a-1)\s}_2(x)dx\le C.$$
Finally,  as $\hat{u}=u_1+u_2$, we conclude.
\end{proof}

\begin{remark}

\

\begin{enumerate}
\item It is clear that the above computations allow us to get a  \textit{regular}  supersolution to problem  \eqref{qqA} for all $q>1$ under suitable
    hypothesis on $f$ and $\l$. \item If, moreover,  $q<p_*=\frac{N}{N-2s+1}$, then, as it was stated in Theorem \ref{compag}, a comparison principle holds
    in
    the sense that if $v$ is satisfies
\begin{equation}\label{qq000}
\left\{
\begin{array}{rcll}
(-\Delta)^s v &\le & |\nabla v|^{q}+\l g & \mbox{ in }\Omega , \\ v &\le & 0 &\hbox{  in } \mathbb{R}^N\setminus\Omega,
\end{array}%
\right.
\end{equation}%
with $(-\Delta)^s v, |\nabla v|^q\in L^1(\O)$ and $g\le f$, then $v\le \hat{u}$ in $\ren$. In the case where $g=0$, then from Theorem \ref{compag}, we know
that $v\le 0$ and then trivially $v\le \hat{u}$.

\item If $q\ge p_*$, then, in general non comparison principle holds. In the local case, namely $s=1$, it was proved in \cite{AA} that for $q\ge
    \frac{N}{N-1}$, the problem
    $$
    -\D u=|\n u|^q  \mbox{ in  }B_1(0), u\in W^{1,q}_0(\O),
    $$
has a non trivial solution $u$. Also it was proved in \cite{ADP} and \cite{AA} that there exists infinitely many positive non comparable solutions.
\end{enumerate}

\end{remark}

Now, as in \cite{HMV}, we can prove the following existence result.
\begin{Theorem}\label{super00}
Under the general hypotheses on $s$ and $q$, assume that $\Omega\equiv \mathbb{R}^N$. There exists a constant $C_1$ depending on $q$ and $N$  such that if
\eqref{m00}   holds with constant  $C_1$, then there exists a positive weak solution $u\in W^{1,q}_{loc}(\ren)$ to the equation
\begin{equation}\label{HMV}
(-\Delta )^s u=|\nabla u|^q+\l f,\quad f\ge 0 \hbox{      in    } \mathbb{R}^N.
\end{equation}
\end{Theorem}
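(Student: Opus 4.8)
The plan is to realize a solution of \eqref{HMV} as a fixed point of the Riesz–potential reformulation of the equation on $\ren$, using the supersolution constructed in Theorem \ref{super} to produce an invariant convex set, and then invoking Schauder's theorem. This mirrors the scheme of \cite{HMV} in the local case, adapted to the nonlocal setting through the gradient bound of Lemma \ref{estimmm} and the Riesz potential estimates of Lemma \ref{ineq1}.

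First I would set $\Phi:=I_{2s-1}(f_0)$, so that by Lemma \ref{ineq1} and the running hypothesis $m>\frac{N}{q'(2s-1)}$ one has $\Phi\in L^\gamma(\ren)$ with $\gamma=\frac{mN}{N-m(2s-1)}\ge q$ (if $m\ge\frac N{2s-1}$ then $\Phi\in L^\infty(\ren)$ and the argument below only simplifies, so assume $m<\frac N{2s-1}$). A direct computation, using $q>p_*$, shows the strict inequality $\gamma>\frac{N(q-1)}{2s-1}$, which is the decisive ingredient for compactness later. Hypothesis \eqref{m00} then reads $I_{2s-1}(\Phi^q)=I_{2s-1}(F_0)\le C_1\Phi$. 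Recalling that on $\ren$ one has $(-\Delta)^{-s}=I_{2s}$ and the pointwise bound $|\n I_{2s}(g)|\le C_0 I_{2s-1}(g)$ for $g\ge 0$ (with $C_0=C_0(N,s)$, as already used in the proof of Theorem \ref{super}), I define the operator $\mathcal{T}(v):=I_{2s}\big(|\n v|^q+\l f_0\big)$ and the set
\[
\mathcal{K}:=\Big\{v\in D^{1,\gamma}(\ren):\ v\ge 0,\ |\n v|\le c\,\Phi\ \text{a.e. in }\ren\Big\},
\]
with $c>0$ fixed together with the smallness of $C_1$ (and, if needed, of $\l$): for $v\in\mathcal{K}$,
\[
|\n\mathcal{T}(v)|\le C_0 I_{2s-1}\big(c^q\Phi^q+\l f_0\big)=C_0c^q I_{2s-1}(\Phi^q)+C_0\l\Phi\le\big(C_0c^q C_1+C_0\l\big)\Phi,
\]
so it suffices to require $C_0c^qC_1+C_0\l\le c$ (e.g. $c=1$, $\l\le\frac1{2C_0}$, $C_1\le\frac1{2C_0}$); then $\mathcal{T}(\mathcal{K})\subset\mathcal{K}$, while $\mathcal{T}(v)\ge0$ is immediate and $\mathcal{T}(v)\in D^{1,\gamma}(\ren)\cap W^{1,q}_{loc}(\ren)$ since $c\Phi\in L^\gamma(\ren)$ and $\gamma\ge q$.

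Next I would verify that $\mathcal{T}$ is continuous and compact on $\mathcal{K}$. For continuity, if $v_j\to v$ in $D^{1,\gamma}(\ren)$ then, using $|\n v_j|,|\n v|\le c\Phi\in L^\gamma(\ren)$ and dominated convergence, $|\n v_j|^q\to|\n v|^q$ in $L^{\gamma/q}(\ren)$; since $\gamma/q>1$, Lemma \ref{ineq1}(b) applied to $I_{2s-1}$, together with the uniform domination $|\n\mathcal{T}(v_j)-\n\mathcal{T}(v)|\le 2c\Phi\in L^\gamma(\ren)$, yields $\n\mathcal{T}(v_j)\to\n\mathcal{T}(v)$ in $L^\gamma(\ren)$. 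For compactness, given $v\in\mathcal{K}$ the function $g:=|\n v|^q+\l f_0$ is bounded in $L^{\gamma/q}(\ren)$, hence by the $L^p$ regularity of the Riesz potential (in the spirit of Lemma \ref{key2}) $\n\mathcal{T}(v)=\n I_{2s}(g)$ is bounded in $W^{2s-1,\gamma/q}_{loc}(\ren)$; since $2s-1>\frac{N(q-1)}{\gamma}$, the embedding $W^{2s-1,\gamma/q}(B_M)\hookrightarrow L^\gamma(B_M)$ is compact, and the uniform tail estimate $\int_{\ren\setminus B_M}|\n\mathcal{T}(v)|^\gamma\le c^\gamma\int_{\ren\setminus B_M}\Phi^\gamma\to0$ as $M\to\infty$ gives equi-tightness; a diagonal argument then shows $\mathcal{T}(\mathcal{K})$ is relatively compact in $L^\gamma(\ren)^N$, i.e. in $D^{1,\gamma}(\ren)$.

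Finally, $\mathcal{K}$ being a closed bounded convex subset of the Banach space $D^{1,\gamma}(\ren)$, the classical Schauder fixed point theorem (cf. \cite{GT}) provides $u\in\mathcal{K}$ with $u=\mathcal{T}(u)$, that is $(-\Delta)^s u=|\n u|^q+\l f$ in $\ren$ in the weak sense of Definition \ref{def1}, with $u\in W^{1,q}_{loc}(\ren)$; positivity follows from $u=I_{2s}(|\n u|^q+\l f_0)\ge\l I_{2s}(f_0)>0$ a.e., since $f\gneq0$. I expect the compactness step to be the main obstacle: unlike the bounded–domain arguments of Section \ref{sec4}, on $\ren$ one must combine local fractional Sobolev compactness with uniform decay at infinity, and it is precisely the strict inequality $\gamma>\frac{N(q-1)}{2s-1}$ — equivalently $m>\frac N{q'(2s-1)}$ — together with $\Phi\in L^\gamma(\ren)$ that makes both available; the smallness of $C_1$ in \eqref{m00} enters only to close the invariance $\mathcal{T}(\mathcal{K})\subset\mathcal{K}$.
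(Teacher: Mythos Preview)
Your proof is correct, but it follows a genuinely different route from the paper's. The paper does \emph{not} use Schauder's fixed point theorem here: instead it builds an explicit Picard iteration $u_{k+1}=I_{2s}(|\nabla u_k|^q)+I_{2s}(\lambda f)$, starting from $u_1=I_{2s}(\lambda f)$, and shows by induction that $|\nabla u_k|\le a\,I_{2s-1}(f)$ with a uniformly bounded constant $a$ (this is exactly your invariance inequality $C_0c^qC_1+C_0\lambda\le c$, read as a recursion $a_{k+1}=C(C_1a_k^q+1)$ that stabilizes provided $C_1$ is small). A second induction then yields the contraction estimate $|\nabla u_{k+1}-\nabla u_k|\le C_2\delta^k I_{2s-1}(f)$ for some $\delta<1$, so $\{u_k\}$ is Cauchy in $D^{1,\gamma}(\ren)$ and the limit solves \eqref{HMV}.

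The trade-off is clear. The paper's iterative argument is more elementary: once the pointwise bounds are in place, no compactness is needed at all, and in particular the delicate step in your proof --- local $W^{2s-1,\gamma/q}$ regularity combined with the tail estimate $\int_{\ren\setminus B_M}\Phi^\gamma\to0$ --- is bypassed entirely. Your Schauder approach, on the other hand, aligns with the strategy the paper later uses in Theorems \ref{fix00} and \ref{fix011} on bounded domains; extending it to $\ren$ forces you to supply the equi-tightness argument, and it is nice that you isolate the strict inequality $\gamma>\frac{N(q-1)}{2s-1}$ (equivalently $m>\frac{N}{q'(2s-1)}$) as what makes the compact embedding $W^{2s-1,\gamma/q}(B_M)\hookrightarrow L^\gamma(B_M)$ available. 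Both proofs use the smallness of $C_1$ only to close the same invariance inequality.
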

\begin{proof}
Take  $u_1$ as the solution to problem
\begin{equation*}
(-\Delta)^s u_1=  \l f \text{   in      }\ren.
\end{equation*}
 and define by recurrence $u_{k+1}$ by setting
$$
u_{k+1}=I_{2s}(|\n u_k|^q)+ I_{2s}(\l f).
$$
Then
$$
(-\Delta)^s u_{k+1}=|\n u_k|^q+  \l f \text{   in }\ren.
$$
We claim that
\begin{equation}\label{one10}
|\n u_k|\le C_1I_{2s-1}(\l f),
\end{equation}
\begin{equation}\label{two10}
|\n u_{k+1}-\n u_k|\le C_2\d^k I_{2s-1}(\l f) \mbox{ for some }\d<1,
\end{equation}
and the sequence $\{u_k\}_k$ is a Cauchy sequence in the space $D^{1, \g}(\ren)$ where $\gamma=\frac{mN}{N-m(2s-1)}\ge q$.

We prove first \eqref{one10} arguing by induction. It is clear that \eqref{one10} holds for $k=1$. Assume that
$$
|\n u_k|\le a_k I_{2s-1}(f),
$$
then we know that
\begin{eqnarray*}
|\n u_{k+1}| &= & |\n (I_{2s}(|\n u_k|^q))+ \n (I_{2s}( f))|\\ &\le & C\bigg(I_{2s-1}(|\n u_k|^q)+I_{2s-1}( f)\bigg)\\ &\le & C \bigg(a_k^q
I_{2s-1}((f_0)^q)+I_{2s-1}( f)\bigg).
\end{eqnarray*}
Now using \eqref{m00}, we conclude that
$$
|\n u_{k+1}|\le a_{k+1}I_{2s-1}(f),
$$
with
$$
a_{k+1}=C(C_1 a_k^q+1).
$$
Then, if  $C_1\le \dfrac{q'^{1-q}}{q C^q}$,
$$\lim_{k\to\infty} a_k=a \le Cq',$$
where $a$  the smaller root to the equation $\tau=C(C_1\tau^q+1)$. Hence \eqref{one10} follows.

As a conclusion and using the same computation as in the last part of the proof of Theorem \ref{super} we reach that the sequence $\{u_k\}_k$ is bounded in the
space $D^{1, \g}(\ren)$ where $\gamma=\frac{mN}{N-m(2s-1)}$. Now using estimate \eqref{one10} we reach that $\{u_k\}_k$ is a Cauchy sequence in $D^{1, \g}(\ren)$.
Hence there exists  $u\in D^{1,\gamma}(\ren)$ such that $u_k\to u$ strongly in $D^{1, \g}(\ren)$. Since $q<\gamma$, then $u_k\to u$ strongly in
$W^{1,q}_{loc}(\ren)$ and then $u$ solves \eqref{qqA} at least in the sense of distributions.
\end{proof}

\subsection{The subcritical case $q<2s$}\label{sub1}
In this subsection we consider the case $q<2s$. We will combine the above ideas in order to show the existence of a  \textit{suitable supersolution} under
\textit{natural} condition on $f$. Then using the comparison principle and the representation formula, as in the previous section, we show the existence of a
minimal solution. More precisely we have the next result.
\begin{Theorem}\label{exit4}
Assume that $q<2s$. Suppose that $f\in L^m(\O)$ where $m>\frac{N}{q'(2s-1)}$ and the hypothesis \eqref{m00} holds. There there exists $\l^*>0$ such that for all
$0<\lambda<\l^*$  problem \eqref{qqA} has  a solution $u$ such that $u\in W^{1,q}_0(\O)$ and $T_k(u)\in H^s_0(\O)\cap W^{1, \s}_0(\O)$ for all $\s<2s$.
\end{Theorem}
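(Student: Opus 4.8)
The plan is to combine the potential-theoretic supersolution of Theorem \ref{super}--Proposition \ref{propo1} with the comparison principle of Theorem \ref{compa2}, and then to pass to the limit in a family of truncated problems by means of a uniform gradient estimate of the type established in Theorems \ref{key01} and \ref{exit2}. First I would fix $\l$ small (adjusting, if necessary, the free constant $C_2$ in the proof of Theorem \ref{super} so that the constant $a$ there satisfies $a\l\ge\l$); then Theorem \ref{super} yields a positive $\hat u\in D^{1,\g}(\ren)$ with $\g=\frac{mN}{N-m(2s-1)}\ge q$, satisfying $(-\Delta)^s\hat u\ge|\n\hat u|^q+\l f_0$ in $\ren$ and $(-\Delta)^s\hat u=\l f_0+C_2F_0\in L^1(\O)$. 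Its restriction to $\O$ is thus a supersolution to \eqref{qq}, with $\hat u>0$ in $\O$, $\hat u\ge0$ on $\ren\setminus\O$, and the regularity required in Theorem \ref{compa2}. By Proposition \ref{propo1}, either $m>\frac N{2s}$ and $\hat u\in L^\infty(\O)$, or $\frac N{q'(2s-1)}<m\le\frac N{2s}$ and then $\hat u\in L^{\frac{mN}{N-2sm}}(\ren)$ together with the bound \eqref{RA}, i.e. $\sup_{y\in\O}\io\hat u^{\a-1}(x)|x-y|^{-(N-2s+\a)}\,dx\le C$ for every $1<\a<\a_0=\frac N{N-m(2s-1)}<2s$.

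Next, let $u_n$ be the unique nonnegative solution of the truncated problem \eqref{aprox1} provided by Corollary \ref{aplic1}, so that $u_n\in W^{1,p}_0(\O)$ for all $p<p_*$ and $T_k(u_n)\in H^s_0(\O)\cap W^{1,\a}(\O)$ for all $\a<2s$. The truncated nonlinearity $H(x,\xi)=|\xi|^q/(1+\tfrac1n|\xi|^q)$ is globally Lipschitz in $\xi$ with a constant independent of $n$, so the weight $b$ of Theorem \ref{compa2} can be taken constant (hence in every $L^\s$), and $H(x,\xi)\le|\xi|^q$, so $\hat u$ is also a supersolution of \eqref{aprox1}. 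Theorem \ref{compa2} then gives the monotone chain $0\le u_n\le u_{n+1}\le\hat u$ in $\O$ for all $n$, whence $u_n\uparrow u\le\hat u$ pointwise and in $L^\s(\O)$ for every $\s$, and $u>0$ since $u\ge u_1>0$.

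The heart of the matter is a uniform bound $\io|\n u_n|^\s\,dx\le C$ for a suitable exponent $q<\s<2s$. Writing $u_n(x)=\io\mathcal{G}_s(x,y)g_n(y)\,dy$ with $g_n=|\n u_n|^q/(1+\tfrac1n|\n u_n|^q)+\l f$, invoking the gradient estimate \eqref{green2}, and splitting the kernel as $h^\s(x,y)\le C(|x-y|^{-\s}+d^{-\s}(x))$ exactly as in the proofs of Theorems \ref{key01} and \ref{exit2}, one reaches
$$
\io|\n u_n|^\s\,dx\le C_1\io|\n u_n|^q\,dx+C_2\Big(\io|\n u_n|^\s\,dx\Big)^{q/\s}+C_3 ,
$$
where the $|x-y|^{-\s}$ contribution is controlled by $\|\hat u\|_{L^\infty(\O)}$ when $m>\frac N{2s}$, and by the bound \eqref{RA} (with $q<\s<\a_0$) when $m\le\frac N{2s}$, while the boundary contribution involving $d^{-\s}(x)$ is absorbed through the function $\psi$ solving $(-\Delta)^s\psi=\hat u^{\s-1}/d^\s$, which lies in a high $L^\theta(\O)$ because $\s<2s$, $\hat u$ is integrable to a large power, and the Hardy inequality \eqref{hardy} applies. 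Since $\s>q$, Hölder's and Young's inequalities absorb the right-hand side and produce the uniform bound; consequently $\{g_n\}$ is bounded in $L^{\s/q}(\O)$ with $\s/q>1$. The compactness statement in Theorem \ref{key} then yields $u_n\to u$ strongly in $W^{1,r}_0(\O)$ for $r<p_*$ with $\n u_n\to\n u$ a.e., and Vitali's lemma, using the uniform $L^\s$ bound with $\s>q$, upgrades this to $u_n\to u$ in $W^{1,q}_0(\O)$; passing to the limit, $u$ solves \eqref{qq}. Finally $F:=|\n u|^q+\l f\in L^1(\O)$, so Theorem \ref{key01} gives $T_k(u)\in H^s_0(\O)\cap W^{1,\s}_0(\O)$ for all $\s<2s$, which completes the proof.

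I expect the main obstacle to be exactly this uniform gradient estimate: one must choose $\s$ strictly between $q$ and $\min\{2s,\a_0\}$ and then verify that the boundary term $\io d^{-\s}(x)\mathcal{G}_s(x,y)\hat u^{\s-1}(x)\,dx$ is genuinely bounded in $y$ --- this is where the integrability of $\hat u$ from Proposition \ref{propo1}, the Hardy inequality, and the hypothesis $q<2s$ (guaranteeing $\s<2s$) have to be used together. A secondary, essentially bookkeeping, point is to match the datum $\l f$ of \eqref{qq} with the datum produced by Theorem \ref{super}, which is what forces $\l$ to be small.
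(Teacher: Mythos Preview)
Your proposal is correct and follows essentially the same route as the paper: build the supersolution $\hat u$ via Theorem~\ref{super} and Proposition~\ref{propo1}, trap the monotone sequence $\{u_n\}$ from the truncated problems~\eqref{aprox1} below $\hat u$ by Theorem~\ref{compa2}, obtain a uniform $W^{1,\alpha}_0$ bound from the Green-function representation with the split $h^\alpha\le|x-y|^{-\alpha}+d^{-\alpha}$, and pass to the limit by the compactness of Theorem~\ref{key} and Vitali, finishing with Theorem~\ref{key01}. The only cosmetic difference is in the boundary term when $m\le N/(2s)$: the paper argues directly that $K(y)=\int_\Omega h^\alpha\mathcal G_s\,\hat u^{\alpha-1}\,dx$ is bounded in $y$ (so the gradient inequality has two terms rather than your three), whereas you keep the auxiliary $\psi$ in a high $L^\theta$ as in Theorem~\ref{exit2}; both versions close the same way since $\sigma>q$.
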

\begin{proof}
We follow closely the argument used in the proof of Theorem \ref{exit2}. Let $u_n$ to be the unique solution to the approximated problem \eqref{aprox1}, then
$u_n\le u_{n+1}$. Fix $\l<\l^*$ defined in \eqref{TTT} and let $\hat{u}$ be the supersolution obtained in Theorem \ref{super}. It is clear that $\hat{u}$ is a
supersolution to problem \eqref{aprox1}. Hence by the comparison principle in Theorem \eqref{compa2} we reach that $u_n\le \hat{u}$ for all $n$. Hence following
the same computation as in the proof of Theorem \ref{exit1}, we obtain that
\begin{equation}\label{main001}
\begin{array}{lll}
\dyle\io |\n u_n(x)|^{\a} dx & \le & \dyle \io|\n u_n(y)|^q \Big(\io h^{\a}(x,y) \mathcal{G}_s(x,y)\hat{u}^{\a-1}(x)dx\Big)dy\\ &+& \l \dyle\io f(y)\Big(\io
h^{\a}(x,y)\mathcal{G}_s(x,y)\hat{u}^{\a-1}(x)dx\big)dy.
\end{array}
\end{equation}
Now, we divide the proof into two cases, according to the value of $m$ and the regularity of $\hat{u}$.

{\bf{The first case: $\frac{N}{2s}<m$.}} In this case, using Proposition \ref{propo1}, we know that $\hat{u}\in L^\infty(\Omega)$. Hence, following again the
proof of Theorem \ref{exit1}, and taking into consideration \eqref{main001}, we conclude that $\dyle\io |\n u_n|^{\a} dx\le C$ for all $n$ provided that $\a<2s$.
Now the rest of the proof follows exactly the proof of Theorem \ref{exit1}.

{\bf{The second case: $\frac{N}{q'(2s-1)}<m\le \frac{N}{2s}$.}} Since $q\ge p_*$, then using Lemma \ref{ineq1}, we obtain that $|\n \hat{u}|\in L^\gamma(\O)$
where $\gamma=\frac{mN}{N-m(2s-1)}>q$.

We set
$$
K(y)=\io h^{\a}(x,y) \mathcal{G}_s(x,y)\hat{u}^{\a-1}(x)dx,
$$
then by \eqref{main001}, we have
\begin{equation}\label{main0011}
\dyle\io |\n u_n(x)|^{\a}dx \le \dyle \io|\n u_n(y)|^q K(y)dy+\l \dyle\io f(y)K(y)dy.
\end{equation}
We claim that for $q<\a<\a_0$, defined in Proposition \ref{propo1}, we have $K\in L^\infty(\O)$. Notice that
$$
K(y)\le\io \dfrac{\hat{u}^{\a-1}(x)G(x,y)}{|x-y|^{\a}}dx+ \io \dfrac{\hat{u}^{\a-1}(x)G(x,y)}{d^{\a}(x)}dx=J_1+J_2.
$$
It is clear that
$$
J_1\le \io \dfrac{\hat{u}^{\a-1}(x)}{|x-y|^{N-2s+\a}}dx.
$$
Hence by the second point in Proposition \ref{propo1}, we obtain that $\dyle\io\dfrac{\hat{u}^{\a-1}(x)}{|x-y|^{N-2s+\a}}dx\le C$. Thus $J_1(y)\le C$. To analyze
$J_2$, we follow the same computation as in  the proof of Theorem \ref{exit1}. As a consequence, we reach that $J_2\le C$.

Therefore we conclude that $K(y)\le C$ for all $y\in \O$ and the claim follows.

 Following again the last part of the proof of Theorem \ref{exit1}, we get the existence of a solution $u$ to problem \eqref{qqA} such that $u\le \hat{u}$, $u\in
 W^{1,\a}_0(\O)$ for all $\a<\a_0$ and $T_k(u)\in H^s_0(\O)\cap W^{1, \s}_0(\O)$ for all $\s<2s$. It is clear that $u$ is a minimal solution to \eqref{qqA}.
\end{proof}

We give now a capacity-based condition   on $f$ in order to show that condition \eqref{m00} holds. Let recall the following result proved in \cite{MV}.
\begin{Theorem}\label{th00}
Assume that $f\in L^1(\ren)$ is a nonnegative function. For any compact set $E\subset \ren$, we define $\dyle |E|_f=\int_Ef(y)dy$.

Then $f$ satisfies the condition \eqref{m00},  if and only if, for any compact set $E\subset \ren$,
\begin{equation}\label{YY}
|E|_f\le C\,\text{cap}_{2s-1,q'}(E)
\end{equation}
where
$$
\text{cap}_{2s-1,q'}(E)\equiv \inf\bigg\{\int_{\ren}\int_{\ren} \dfrac{|\phi(x)-\phi(y)|^{q'}}{|x-y|^{N+q'(2s-1)}}dxdy \mbox{  with }\phi\in
\mathcal{C}^\infty_0(\ren)\mbox{  and  }\phi\ge \chi_E\bigg\}.
$$
\end{Theorem}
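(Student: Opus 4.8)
The statement is a Riesz‑potential incarnation of the Maz'ya--Verbitsky characterisation of Riccati‑type conditions, and the natural plan is to interpose an auxiliary \emph{global testing condition} between \eqref{m00} and \eqref{YY}. Writing $\mu=f\,dx$ and, for a compact set $E\subset\ren$, $\mu_E=\mu\,\chi_E$, consider
\[
\int_{\ren}\big(I_{2s-1}(\mu_E)\big)^{q}\,dx\le C\,\mu(E)\qquad\text{for every compact }E ,
\]
which I will call $(\star)$. I would prove separately that \eqref{YY} is equivalent to $(\star)$, and that $(\star)$ is equivalent to \eqref{m00}. The common computational tools are the elementary Fubini identity $I_{2s-1}\theta(x)\simeq\int_0^{\infty}\theta\big(B_t(x)\big)\,t^{-(N-2s+1)}\,\frac{dt}{t}$, valid for nonnegative measures $\theta$, and the dual description of the Riesz capacity,
\[
\mathrm{cap}_{2s-1,q'}(E)\simeq\sup\big\{\,\nu(E)^{q'}\,\|I_{2s-1}\nu\|_{L^{q}(\ren)}^{-q'}\ :\ \nu\ge0,\ \operatorname{supp}\nu\subseteq E\,\big\}.
\]

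For the first equivalence I would only invoke classical facts. By Maz'ya's capacitary criterion, \eqref{YY} is equivalent to the Sobolev trace inequality $\|\phi\|_{L^{q'}(d\mu)}\le C\,\|\phi\|_{\dot W^{2s-1,q'}(\ren)}$ for all $\phi\in\mathcal C_0^{\infty}(\ren)$; writing $\phi=I_{2s-1}g$ and dualising (using the self‑adjointness of $I_{2s-1}$), this becomes $\|I_{2s-1}(h\,d\mu)\|_{L^{q}(\ren)}\le C\,\|h\|_{L^{q}(d\mu)}$ for $h\ge0$. Testing on $h=\chi_E$ gives $(\star)$; conversely, $(\star)$ for a given $E$ reads $\|I_{2s-1}\mu_E\|_{L^{q}}^{q}\le C\mu(E)$, and inserting $\nu=\mu_E$ into the dual formula for the capacity yields $\mu(E)\lesssim\mathrm{cap}_{2s-1,q'}(E)$, which is \eqref{YY}.

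The substantive half is the equivalence of $(\star)$ with \eqref{m00}. To deduce \eqref{m00} from $(\star)$, fix $x$ with $I_{2s-1}\mu(x)<\infty$ (true a.e., since $f\in L^1$), set $B_k=B(x,2^{k})$ and decompose $\ren$ into the dyadic annuli $A_k=B_{k+1}\setminus B_k$. On $A_k$ write $I_{2s-1}\mu=I_{2s-1}(\mu\chi_{B_{k+2}})+I_{2s-1}(\mu\chi_{\ren\setminus B_{k+2}})$; the second term is, uniformly on $A_k$, comparable to the tail $\tau_k(x)=\int_{\ren\setminus B_{k+2}}|x-z|^{-(N-2s+1)}\,d\mu(z)$. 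Raising to the power $q$, integrating $|x-y|^{-(N-2s+1)}(\,\cdot\,)$ over $A_k$ and summing in $k$, the ``near'' contributions are controlled by $(\star)$ on $E=B_{k+2}$ and collapse, via the Fubini identity, to a multiple of $I_{2s-1}\mu(x)$. For the ``tail'' part one first notes that $(\star)$ on $E=B_\rho(x)$, together with the estimate of $\int_{\ren}(I_{2s-1}\mu_{B_\rho})^q$ \emph{off} $B_\rho$ (finite because $q>p_*$), forces the Frostman‑type bound $\mu\big(B_\rho(x)\big)\lesssim\rho^{\,N-q'(2s-1)}$; combined with the trivial bound $\tau_k(x)\le I_{2s-1}\mu(x)$ this gives geometric decay of $\tau_k(x)$ at the appropriate scales and hence $\sum_k 2^{k(2s-1)}\tau_k(x)^{q}\lesssim I_{2s-1}\mu(x)$, proving \eqref{m00}. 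For the reverse implication one integrates \eqref{m00} against $\mu_E$, obtaining $\int_{\ren}(I_{2s-1}\mu_E)^{q+1}\,dx\le C_1\int_E I_{2s-1}\mu\,d\mu$, and upgrades this to $(\star)$ by a Sawyer‑type argument: a stopping‑time splitting of $E$ that isolates the part where $I_{2s-1}\mu$ is large and reinjects the remaining good part into the inequality.

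I expect the \emph{main obstacle} to be exactly this equivalence of $(\star)$ with \eqref{m00}: the two passages between the \emph{pointwise} nonlinear inequality and the \emph{integrated} testing condition are where the problem stops being formal. Closing the tail series $\sum_k 2^{k(2s-1)}\tau_k(x)^q$ relies on the self‑improvement ``$(\star)\Rightarrow$ Frostman bound'' and a careful balance of scales, and the reverse (``hiding the singularity'' of $I_{2s-1}\mu$) is the genuinely nonlinear point; both belong to the Maz'ya--Verbitsky circle of ideas and would be cited from \cite{MV} (see also \cite{HMV}) rather than reproved. Once $(\star)$ is available, its identification with \eqref{YY} is classical, as sketched above.
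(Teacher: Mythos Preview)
The paper does not prove this theorem at all: it is introduced with the phrase ``Let recall the following result proved in \cite{MV}'' and is simply quoted as a known result of Maz'ya--Verbitsky, with no argument supplied. Your proposal goes well beyond what the paper does, but in the end you also acknowledge that the substantive step---the equivalence of the pointwise nonlinear condition \eqref{m00} with the testing condition $(\star)$---``belongs to the Maz'ya--Verbitsky circle of ideas and would be cited from \cite{MV} (see also \cite{HMV}) rather than reproved.'' So in substance you and the paper are doing the same thing: deferring the proof to \cite{MV}. Your sketch of how that proof is organised (intermediate testing condition, dual formula for the Riesz capacity, dyadic decomposition plus Frostman self-improvement for the pointwise bound, Sawyer-type stopping time for the converse) is a faithful outline of the argument in \cite{MV} and \cite{HMV}, and it is useful as an orientation; but since the paper treats Theorem~\ref{th00} as a black box, there is nothing further to compare.
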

As a consequence we have the following result.
\begin{Theorem}\label{th011}
Let $f\in L^m(\O)$ with $m\ge \frac{N}{q'(2s-1)}$ and define $f_0$ as in \eqref{ff0}. Then the condition \eqref{m00} holds for $f_0$.
\end{Theorem}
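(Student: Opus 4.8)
The plan is to deduce the statement from the capacitary characterisation of Theorem \ref{th00} and then to verify the capacity inequality \eqref{YY} for $f_0$ by an elementary measure--capacity comparison. First, since $\O$ is bounded and $m\ge 1$, Hölder's inequality gives $f\in L^1(\O)$, hence $f_0\in L^1(\ren)$ is a nonnegative function and Theorem \ref{th00} applies; it therefore suffices to produce a constant $C>0$ with
\[
|E|_{f_0}=\int_E f_0\,dy\le C\,\text{cap}_{2s-1,q'}(E)\qquad\text{for every compact }E\subset\ren.
\]
Because $f_0$ vanishes outside $\O$ while $\text{cap}_{2s-1,q'}$ is monotone with respect to inclusion, replacing $E$ by $E\cap\overline{\O}$ does not decrease the left-hand side nor increase the right-hand side, so we may assume $E\subset\overline{\O}$; in particular $|E|\le|\O|$.

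The key analytic input is the homogeneous fractional Sobolev inequality. Because $q>p_*$, equivalently $(2s-1)q'<N$, there is $C=C(N,s,q)>0$ such that
\[
\Big(\int_{\ren}|\phi|^{r}\,dx\Big)^{q'/r}\le C\int_{\ren}\int_{\ren}\frac{|\phi(x)-\phi(y)|^{q'}}{|x-y|^{N+q'(2s-1)}}\,dx\,dy,\qquad r=\frac{Nq'}{N-(2s-1)q'},
\]
for all $\phi\in\mathcal{C}^\infty_0(\ren)$. Inserting any $\phi\ge\chi_E$ bounds $|E|^{q'/r}$ from above by the double integral; taking the infimum over all such $\phi$, and noting $q'/r=1-(2s-1)q'/N>0$, yields the isocapacitary estimate
\[
|E|^{\,1-\frac{(2s-1)q'}{N}}\le C\,\text{cap}_{2s-1,q'}(E).
\]

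Finally, the plan is to combine this with Hölder's inequality on $\O$, namely $|E|_{f_0}\le\|f\|_{L^m(\O)}\,|E|^{1/m'}$. The hypothesis $m\ge\frac{N}{q'(2s-1)}$ reads precisely $\frac1m\le\frac{(2s-1)q'}{N}$, that is $\frac1{m'}\ge 1-\frac{(2s-1)q'}{N}\ge 0$; hence, using $|E|\le|\O|$,
\[
|E|^{1/m'}\le |\O|^{\frac1{m'}-1+\frac{(2s-1)q'}{N}}\,|E|^{\,1-\frac{(2s-1)q'}{N}}\le C(\O,N,s,q)\,\text{cap}_{2s-1,q'}(E).
\]
Chaining the last two displays gives $|E|_{f_0}\le C\,\text{cap}_{2s-1,q'}(E)$ for every compact $E\subset\ren$, and Theorem \ref{th00} then guarantees that $f_0$ satisfies \eqref{m00}. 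The only genuinely nontrivial ingredient is the isocapacitary inequality of the middle step, a standard consequence of the fractional Sobolev embedding; everything else is bookkeeping, with the exponent condition on $m$ calibrated exactly so that the two powers of $|E|$ match.
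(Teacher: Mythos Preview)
Your proof is correct and follows essentially the same strategy as the paper's: both reduce to the capacitary criterion of Theorem~\ref{th00} and verify \eqref{YY} by combining H\"older's inequality with the fractional Sobolev embedding for the exponent pair $(2s-1,q')$. The only cosmetic difference is the order of the two ingredients---the paper pairs $f_0$ directly with $|\phi|^{q'}$ via H\"older and then bounds $\|\phi\|_{L^{p^*_{s_1}}}^{q'}$ by the Gagliardo seminorm, whereas you first extract the isocapacitary inequality $|E|^{1-(2s-1)q'/N}\le C\,\text{cap}_{2s-1,q'}(E)$ and then apply H\"older with $\chi_E$; the bounded-domain factor $|\O|$ that you introduce plays the same role as the constant $C(\O)$ in the paper.
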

\begin{proof}
We have just to show that the condition \eqref{YY} holds. Let $E$ be a compact set and consider $\phi\in \mathcal{C}^\infty_0(\ren)$ be such that $\phi\ge
\chi_E$. Using H\"older inequality
$$
|E|_{f_0}\le \int_E f_0|\phi|^{q'}dx\le \Big(\int_{\ren} f_0^{\frac{p^*_{s_1}}{p^*_{s_1}-p}}(x)dx\Big)^{\frac{p^*_{s_1}-p}{p^*_{s_1}}} \Big(\int_{\ren}
|\phi(x)|^{p^*_{s_1}}dx\Big)^{\frac{p}{p^*_{s_1}}}
$$
where $p=q', s_1=2s-1$ and $p^*_{s_1}=\frac{pN}{N-ps_1}$. It is clear that $\frac{p^*_{s_1}}{p^*_{s_1}-p}=\frac{N}{q'(2s-1)}$. Since $m\ge \frac{N}{q'(2s-1)}$,
using Sobolev inequality, it holds that
$$
|E|_{f_0}\le C(\O)||f||_{L^m(\O)}\text{cap}_{2s-1,q'}(E).
$$
Hence we conclude.
\end{proof}
\subsection{The critical case $q= 2s$}\label{critical} In this case there are difficulties to use the comparison arguments.  It is possible to find  a
supersolution but it is not clear how to pass to the limit in the gradient term when dealing with the family of approximating problems.

In the local case this difficulty is overpasses by using convenient nonlinear test functions and a suitable change of variable. In the nonlocal framework it seems to be necessary to change this point of view and to adapt a different approach. We will use in a convenient way the Schauder fixed point theorem following the
strategy used recently  in \cite{CON1} and \cite{CON2} for the local case, under suitable condition on $s$.

{
\begin{Theorem}\label{fix00}
Suppose that $\O$ is a bounded regular domain and that $f\in L^m(\O)$ where $m>\frac{N}{2s}$. Then there exists $\l^*(f)>0$ such that for all $\l<\l^*$, problem
\eqref{qqA} has a solution $u\in W^{1,2s}_{loc}(\O)\cap W^{1,1}_0(\O)$.
\end{Theorem}
}
\begin{proof} Without loss of generality we can assume that $N\ge 2$. Suppose that $f\in L^m(\O)$ where $m>\frac{N}{2s}$. Lemma \ref{key2-corr} and Remark \ref{loc-reg}, we know that if $v$ is a solution to  problem \eqref{gener1-corr}, then for all
$p<\frac{m N}{N-m(2s-1)}$, there exists a positive constant $C_0$ such that
\begin{equation}\label{fix2}
|||\n v|\,d^{1-s}||_{L^p(\Omega)}\le C_0||f||_{L^m(\O)},
\end{equation}
and
\begin{equation}\label{fix25}
|||\n (v\,d^{1-s})||_{L^p(\Omega)}\le \hat{C}_0||f||_{L^m(\O)}.
\end{equation}

Since $m>\frac{N}{2s}$, then $\s_0\equiv 2sm<\frac{m N}{N-m(2s-1)}$. Now, taking into account that $2s>1$, then for a universal constant $\bar{C}$ that we will chose later,  we get the existence of $\l^*>0$ such that for some $l>0$, we
have
$$
\bar{C}(l+\l^*||f||_{L^m(\O)})=l^{\frac{1}{2s}}.
$$
Fix $\l<\l^*$ and  $l>0$ as above. We define the set
\begin{equation}\label{sett}
E=\{v\in W^{1,1}_0(\O): v\, d^{1-s}\in W^{1,2sm}_0(\O)\mbox{  and  } \bigg(\io |\n (v\, d^{1-s})|^{2sm} dx\bigg)^{\frac{1}{2sm}}\le l^{\frac{1}{2s}}\},
\end{equation}
It is easy to check that $E$ is a closed convex set of $W^{1,1}_0(\O)$.
Notice that, using Hardy inequality, we obtain that if $v\in E$, then $|\n v|^{2sm}d^{2sm(1-s)}\in L^1(\O)$ and
$$
\bigg(\io |\n v|^{2sm}\, d^{2sm(1-s)}dx\bigg)^{\frac{1}{2sm}}\le \hat{C}_0 l^{\frac{1}{2s}}.
$$
Consider the operator
$$
\begin{array}{rcl}
T:E &\rightarrow& W^{1,1}_0(\O)\\
   v&\rightarrow&T(v)=u
\end{array}
$$
where $u$ is the unique solution to problem
\begin{equation}
\left\{
\begin{array}{rcll}
(-\Delta)^s u &= & |\nabla v|^{2s}+\l f & \text{ in }\Omega , \\ u &=& 0 &\hbox{  in } \mathbb{R}^N\setminus\Omega,\\ u&>&0 &\hbox{ in }\Omega.
\end{array}%
\right.  \label{fix1}
\end{equation}
{\sc First step:}
We prove that $T$ is well defined. By the result of Theorem \ref{key} and Remark \ref{rm01}, to get the desired result, we just have to show the existence of $\b<2s-1$ such that $|\n v|^{2s}d^\b\in L^1(\O)$. It is cleat that $|\n v|^{2s}\in L^1_{loc}(\O)$, moreover, we have
$$\begin{array}{ll}
&\dyle\io|\n v|^{2s}d^\b dx=\io|\n v|^{2s}d^{2s(1-s)}d^{\beta-2s(1-s)}dx\\
&\dyle\le \bigg(\io|\n v|^{2sm}d^{2s(1-s)m}dx\bigg)^{\frac{1}{m}}
\bigg(\io d^{(\beta-2s(1-s))m'}dx\bigg)^{\frac{1}{m'}}.
\end{array}
$$
If $2s(1-s)<2s-1$, we can chose $\beta<2s-1$ such that $2s(1-s)<\beta$. Hence
$$\io d^{(\beta-2s(1-s))m'}dx<\infty.$$

Assume that $2s(1-s)\ge 2s-1$, then $s\in (\frac{1}{2}, \frac{1}{\sqrt{2}}]$. Notice that $2s(1-s)-(2s-1)=1-2s^2$. Since $m>\frac{N}{2s}$ and $N\ge 2$, then $(1-2s^2)m'<1$. Hence we get easily the existence of $\beta<2s-1$ such that
$(2s(1-s)-\beta)m'<1$ and then we conclude.

Then using the fact that $v\in E$, we reach that
$|\nabla v|^{2s}d^\b+\l f \in L^1(\O)$. Therefore the existence of $u$ is a consequence of Theorem \ref{key} and Remark \ref{rm01}. Moreover,  $|\n u|\in L^{\a}(\O)$ for all
$\a<\frac{N}{N-s}$. Hence $T$ is well defined.

{\sc Second step:}

We claim that:
\begin{enumerate}
\item For $\delta>0$ small enough, $T(E)\subset E$, \item $T$ is a continuous and  compact operator on $E$.
\end{enumerate}
\textsc{Proof of (1)}.

We have
$$
u(x)=\io \mathcal{G}_s(x,y)|\n v(y)|^{2s}dy+\l \io \mathcal{G}_s(x,y)f(y))dy,
$$
then
$$
u(x)\le C\bigg(\io \frac{|\n v(y)|^{2s}d^{\theta s}(y) d^{(1-\theta)s}(x)}{|x-y|^{N-s}}dy+\l \io \frac{d^{\theta s}(y) d^{(1-\theta)s}(x) f(y)}{|x-y|^{N-s}}dy\bigg).
$$
Setting $\theta=2(1-s)<1$, it holds that
$$
u(x)\le C d^{s(2s-1)}(x)\bigg(\io \frac{|\n v(y)|^{2s}d^{2s(1-s)}(y)}{|x-y|^{N-s}}dy+\l \io \frac{d^{2s(1-s)}(y) f(y)}{|x-y|^{N-s}}dy\bigg).
$$
Since $v\in E$, then by Lemma \ref{ineq1}, we obtain that $\dfrac{u}{d^{s(2s-1)}}\in L^{\frac{mN}{N-ms}}(\O)$ and
$$
||\frac{u}{d^{s(2s-1)}}||_{L^{\frac{mN}{N-ms}}(\O)}\le C\bigg(||\,|\n v|^{2s}d^{2s(1-s)}\,||_{L^m(\O)}+\l ||f||_{L^m(\O)}\bigg)\le C(l^{\frac{1}{2}}+\l ||f||_{L^m(\O)}).
$$

On the other hand, we have
\begin{eqnarray*}
|\n u(x)| &\le & \io |\n_x \mathcal{G}_s(x,y)| (|\n v(y)|^{2s} \l f(y))dy\\
&\le & \io \frac{|\n_x \mathcal{G}_s(x,y)|}{\mathcal{G}_s(x,y)}\mathcal{G}_s(x,y) (|\n v(y)|^{2s} dy+ \l f(y))dy.
\end{eqnarray*}
Recall that form \ref{green2}, we have
$$
|\n_x \mathcal{G}_s(x,y)|\le C_2 \mathcal{G}_s(x,y)\max\{\frac{1}{|x-y|}, \frac{1}{d(x)}\},
$$
thus
\begin{eqnarray*}
|\n u(x)| &\le & C_2\int_{\{|x-y|<d(x)\}} \frac{\mathcal{G}_s(x,y)}{|x-y|}|\n v(y)|^{2s} dy+ \frac{C_2}{d(x)} \int_{\{|x-y|\ge d(x)\}} \mathcal{G}_s(x,y)|\n v(y)|^{2s} dy \\
&+ & C_2\l \bigg(\int_{\{|x-y|<d(x)\}} \frac{\mathcal{G}_s(x,y)}{|x-y|}f(y)dy+\frac{1}{d(x)}\int_{\{|x-y|\ge d(x)\}} \mathcal{G}_s(x,y)f(y)dy\Bigg).
\end{eqnarray*}
Hence
\begin{eqnarray*}
|\n u(x)| d^{1-s}&\le & C_2(I_1(x)+I_2(x)+J_1(x)+J_2(x)),
\end{eqnarray*}
where
$$
I_1(x)=d^{1-s}(x)\int_{\{|x-y|<d(x)\}} \frac{\mathcal{G}_s(x,y)}{|x-y|}|\n v(y)|^{2s} dy,
$$
$$
I_2(x)=\frac{1}{d^s(x)} \int_{\{|x-y|\ge d(x)\}} \mathcal{G}_s(x,y)|\n v(y)|^{2s} dy,
$$
$$
J_1(x)=d^{1-s}(x)\int_{\{|x-y|<d(x)\}} \frac{\mathcal{G}_s(x,y)}{|x-y|}f(y)dy,
$$
and
$$
J_2(x)=\frac{1}{d^s(x)} \int_{\{|x-y|\ge d(x)\}} \mathcal{G}_s(x,y)f(y) dy.
$$
Thus to prove the claim, we will show separately that $I_i,J_i\in L^{2sm}(\O)$ for $i=1,2$.

Let us begin by estimating $I_1$.

We have

\begin{eqnarray*}
I_1(x) &=& d^{1-s}(x)\int_{\{\frac 12 d(x)\le |x-y|<d(x)\}} \frac{\mathcal{G}_s(x,y)}{|x-y|}|\n v(y)|^{2s} dy
+ d^{1-s}(x)\int_{\{|x-y|<\frac 12 d(x)\}} \frac{\mathcal{G}_s(x,y)}{|x-y|}|\n v(y)|^{2s} dy\\
&\le & I_{11}(x)+I_{12}(x).
\end{eqnarray*}
Since
$$
\mathcal{G}_s(x,y)\le C\frac{d^{1-s}(y)d^{2s-1}(x)}{|x-y|^{N-s}},
$$
we obtain that
$$
I_{11}(x)\le C\int_{\{\frac 12 d(x)\le |x-y|<d(x)\}} \frac{|\n v(y)|^{2s}d^{1-s}(y)}{|x-y|^{N-(2s-1)}} dy\le C\io\frac{|\n v(y)|^{2s}d^{1-s}(y)}{|x-y|^{N-(2s-1)}} dy.
$$
Since $|\n v|^{2s}d^{1-s}\in L^{m}(\O)$, using Lemma \ref{ineq1}, it holds that $I_{11}\in L^{\frac{mN}{N-(2s-1)m}}(\O)$. As $m>\frac{N}{2s}$, then $\frac{mN}{N-(2s-1)m}>2sm$. Hence we conclude that
$$
||I_{11}||_{L^{2sm}(\O)}\le C|||\n v|^{2s}d^{1-s}||_{L^{2sm}(\O)}.
$$

We deal now with $I_{12}$. Since $d$ is a Lipschitz function, it holds that, for $(x,y)\in \{|x-y|<\frac 12 d(x)\}$,
$$
|d(x)-d(y)|\le |x-y|\le \frac 12 d(x).
$$
Thus
$$
\frac 12 d(x)\le d(y)\le \frac 32 d(x).
$$
Hence we obtain that
$$
I_{12}(x)\le Cd^{1-s}(x)\int_{\{|x-y|<\frac 12 d(x)\}} \frac{|\n v(y)|^{2s}}{|x-y|^{N-(2s+1)}}dy\le C\int_{\{|x-y|<\frac 12 d(x)\}} \frac{|\n v(y)|^{2s}d^{1-s}(y)}{|x-y|^{N-(2s+1)}}dy.
$$
Therefore, as in the estimate of $I_{11}$, we reach that
$$
||I_{12}||_{L^{2sm}(\O)}\le C|||\n v|^{2s}d^{1-s}||_{L^{2sm}(\O)}.
$$
We treat now $I_2$ which is more involved.
We have
\begin{eqnarray*}
I_2(x) &\le & \frac{1}{d^s(x)} \int\limits_{\{d(x)\le |x-y|\le d(y)\}} \mathcal{G}_s(x,y)|\n v(y)|^{2s} dy+
\frac{1}{d^s(x)} \int\limits_{\{|x-y|\ge \max\{d(y),d(x)\}\}} \mathcal{G}_s(x,y)|\n v(y)|^{2s} dy\\
&=& I_{21}(x)+I_{22}(x).
\end{eqnarray*}
Respect to $I_{21}$, we have
\begin{eqnarray*}
I_{21}(x) &\le & C\frac{1}{d^s(x)} \int_{\{d(x)\le |x-y|\le d(y)\}} \frac{|\n v(y)|^{2s} d^s(x)}{|x-y|^{N-s}}dy\le C\int_{\{d(x)\le |x-y|\le d(y)\}} \frac{|\n v(y)|^{2s}}{|x-y|^{N-s}}dy\\
&\le & C \int_{\{d(x)\le |x-y|\le d(y)\}} \frac{|\n v(y)|^{2s}d^{1-s}(y)}{|x-y|^{N-s} d^{1-s}(y)}dy\\
&\le & C \int_{\{d(x)\le |x-y|\le d(y)\}} \frac{|\n v(y)|^{2s}d^{1-s}(y)}{|x-y|^{N-(2s-1)}}dy.
\end{eqnarray*}
Hence, as in the estimate of $I_{11}$, using again Lemma \ref{ineq1}, it holds that
$$
||I_{21}||_{L^{2sm}(\O)}\le C|||\n v|^{2s}d^{1-s}||_{L^{2sm}(\O)}.
$$
We deal now with $I_{22}$. Notice that
$$
\mathcal{G}_s(x,y)\le C\frac{d^s(x)d^s(y)}{|x-y|^N},
$$
Hence
\begin{eqnarray*}
I_{22}(x) &\le & C \int_{\{|x-y|\ge \max\{d(y),d(x)\}\}} \frac{|\n v(y)|^{2s}d^s(y)}{|x-y|^N}dy\\
&\le & C \int_{\{|x-y|\ge \max\{d(y),d(x)\}\}} \frac{|\n v(y)|^{2s}d^{1-s}(y)d^{2s-1}(y)}{|x-y|^N}dy\\
&\le & C \int_{\{|x-y|\ge \max\{d(y),d(x)\}\}} \frac{|\n v(y)|^{2s}d^{1-s}(y)}{|x-y|^{N-(2s-1)}}dy.
\end{eqnarray*}
Hence we conclude as in the previous estimates.

Thus, as a conclusion we have proved that
$$
||I_{1}||_{L^{2sm}(\O)}+ ||I_{2}||_{L^{2sm}(\O)}\le C|||\n v|^{2s}d^{1-s}||_{L^{2sm}(\O)}.
$$
Respect to $J_1, J_2$, since $f\in L^m(\O)$, then we can repeat the same decomposition  as above and we reach the same conclusion.

Therefore, we find that
\begin{equation}\label{uu}
\| |\n u| d^{1-s}\|_{L^{2sm}(\O)}\le C\bigg(\| |\n v|^{2s}d^{1-s}\|_{L^{2sm}(\O)}+\l ||f||_{L^m(\O)}\bigg)\le l^{\frac{1}{2s}}.
\end{equation}
Hence the first point follows.

\textsc{ Proof of (2)}. To show the continuity of $T$ respect to the topology of $W^{1,1}_0(\O)$, we consider $\{v_n\}_n\subset E$ such that $v_n\to v$ strongly
in $W^{1,1}_0(\O)$. Define $u_n=T(v_n), u=T(v)$.

We have to show that $u_n\to u$ strongly in $W^{1,1}_0(\O)$;  by Theorem \ref{key}, to do this,  we will prove that  $$||\n v_n -\n v||_{_{L^{2s}(d^\beta dx, \O)}}\to 0 \hbox{   as } n\to \infty,$$ for some $\beta<2s-1$. As in the proof of the fist step, we get the existence of $\beta<2s-1$ such that
$$\io|\n v_n|^{2s}d^\b dx\le C \hbox{   for all   } n.$$
Since $2sm>1$, then setting $a=\dfrac{2s(m-1)}{2sm-1}<1$, it follows that $\dfrac{2s-a}{1-a}=2sm$. Hence by Hölder inequality, we conclude that
\begin{eqnarray*}
||\n v_n -\n v||_{_{L^{2s}(d^\beta dx, \O)}} & \le & ||\n v_n-\n v||^{\frac{a}{2s}}_{L^{1}(d^{\beta} dx, \O)} ||\n v_n-\n v||^{\frac{2s-a}{2s}}_{L^{\frac{2s-a}{1-a}}(d^{1-s} dx, \O)}\\ &\le & C||\n v_n-\n
v||^{\frac{a}{2s}}_{L^{1}(\O)}\to 0\mbox{  as  }n\to \infty.
\end{eqnarray*}
Now, using the definition of $u_n$ and $u$, there results that $u_n\to u$ strongly in $W^{1,1}_0(\O)$. Thus $T$ is continuous.

To finish we have just to show that $T$ is compact respect to the topology of $W^{1,1}_0(\O)$.

Let $\{v_n\}_n\subset E$ be such that $||v_n||_{W^{1,1}_0(\O)}\le C$. Since $\{v_n\}_n\subset E$, then
 $$||\n (v_n d^{1-s})||_{L^{2sm}(\O)}\le C$$
  and therefore up to a subsequence, $v_{n_k}d^{1-s}\rightharpoonup vd^{1-s}$ weakly in $W^{1,2sm}_0(\O)$. It is clear that $v_{n_k}\rightharpoonup v$ weakly in $W^{1,2sm}_{loc}(\O)$

Define
$$
F_n=|\nabla v_n|^{2s}+\l f, F=|\nabla v|^{2s}+\l f,
$$
then, as in the first step, $F_n d^\beta$ is bounded in $L^{1+\d}(\O)$ and $F_n d^\beta\rightharpoonup F d^\beta$ weakly in $L^{1+\d}(\O)$ for some $\d>0$ and $\beta<2s-1$. Using the compactness result of \cite{CV1}, we conclude
that, up to a subsequence, $u_{n_k}\to u$ strongly in $W^{1,1}_0(\O)$, hence the claim follows.

As a conclusion and using the Schauder Fixed Point Theorem, there exists $u\in E$ such that $T(u)=u$, then $u\in W^{1,2s}_0(\O)$ and $u$ solves \eqref{qqA}.
\end{proof}
\begin{remark}\label{rm11}

\

\begin{enumerate}
\item The solution obtained above is the unique solution in $E$.  Indeed, assume $u_1$ and $u_2\in E$ solutions to problem \eqref{qqA}.  Therefore  $||\n (u_1 d^{1-s})||_{L^{2sm}(\O)}<\infty$ and $ ||\n (u_2 d^{1-s})||_{L^{2sm}(\O)}<\infty$.

    Define $w=u_1-u_2$, then $||\n (w d^{1-s})||_{L^{2sm}(\O)}<\infty$, $w\in W^{1,\theta}_0(\O)$ for all $\theta<\frac{N}{N-2s+1}$ and $w$
    solves the problem
$$
\left\{
\begin{array}{rcll}
(-\Delta)^s w &= & |\nabla u_1|^{2s}-|\nabla u_2|^{2s} & \text{ in }\Omega , \\ w &=& 0 &\hbox{  in } \mathbb{R}^N\setminus\Omega.
\end{array}%
\right.
$$
Setting $b(x)=|\nabla u_1|^{2s-1}+|\nabla u_2|^{2s-1}$, the following inequality holds
$$
(-\Delta)^s w \le 2s b(x)|\n w|\text{ in }\Omega.
$$
Since $0<2s-1<1$, then $b\in L^1(\O)$.  Then as $m>\frac{N}{2s}$, we get $b\in L^\s(\O)$ for $\s>\frac{N}{2s-1}$.
Therefore, using the comparison principle in Theorem \ref{compa2}, it follows that $w_+=0$. Thus
$u_1\le u_2$. In a similar way we get $u_2\le u_1$. Hence $u_1=u_2$.

\item The solution $u\in E$ is the minimal solution to problem \eqref{qqA}. Assume that $v$ is an other solution to \eqref{qqA} with $|\n v|^{es}d^s\in
L^1(\O)$. As above, setting $w=(u-v)_+$, using the fact that for all $\xi_1, \xi_2\in \ren$, for all $\alpha>1$, we have
$$
|\xi_1|^\alpha-|\xi_2|^\alpha\le \alpha|\xi_1|^{\alpha-2}\langle\xi_1, \xi_1-\xi_2\rangle,
$$
and by Kato inequality, it follows that
$$
\left\{
\begin{array}{rcll}
(-\Delta)^s w &\le & |\nabla u|^{2s}-|\nabla v|^{2s}\le 2s|\n u|^{2s-1}|\n w| & \text{ in }\Omega , \\ w &=& 0 &\hbox{  in } \mathbb{R}^N\setminus\Omega.
\end{array}%
\right.
$$
Setting $b(x)=2s|\nabla u|^{2s-1}$ and using the fact that $u\in E$, there results that $b\in L^1(\O)\cap L^\s_{loc}(\O)$ for $\s=\frac{2sm}{2s-1}>\frac{N}{2s-1}$. As above,
using the comparison principle in Theorem \ref{compa2}, we conclude that $w=0$. Hence $u\le v$.
\end{enumerate}
\end{remark}
\subsection{The supercritical case $q>2s$}\label{sub31}
According to the value of $q$, we will consider two cases: $2s<q\le \dfrac{s}{1-s}$ and $q>\frac{s}{1-s}$.

In the first case and in a similar way as in  the critical case $q=2s$, we prove the following  result.
\begin{Theorem}\label{fix011}
Suppose that $\O$ is a bounded regular domain and $2s<q\le \frac{s}{1-s}$. Assume that $f\in L^m(\O)$ with $m>\dfrac{N}{q'(2s-1)}$. Then there exists $\l^*(f)>0$ such that for all $\l<\l^*$,
problem \eqref{qqA} has a solution $u$ such that $u d^{1-s}\in W^{1,qm}_0(\O)$.
\end{Theorem}
\begin{proof} In this case we choose $l>0$ and $\s_0$ such that
$$
\s_0\equiv 2sm<\frac{m N}{N-m(2s-1)}\mbox{  and   } C_0(l+\l^*||f||_{L^m(\O)})=l^{\frac{1}{q}}.
$$
Define the set
\begin{equation}\label{sett2}
E=\{v\in W^{1,1}_0(\O): v d^{1-s}\in W^{1,qm}_0(\O)\mbox{  and  }||\n (v d^{1-s}||_{L^{qm}(\O)}\le l^{\frac{1}{q}}\}.
\end{equation}
As in the proof of Theorem \ref{fix00} we consider $T_q:E \to W^{1,1}_0(\O)$ defined by $u=T_q(v)$ where $u$ is the unique solution to problem
\begin{equation}
\left\{
\begin{array}{rcll}
(-\Delta)^s u &= & |\nabla v|^{q}+\l f & \text{ in }\Omega , \\ u &=& 0 &\hbox{  in } \mathbb{R}^N\setminus\Omega,\\ u&>&0 &\hbox{ in }\Omega.
\end{array}%
\right.  \label{fix4}
\end{equation}%
By similar  computations as in the proof of Theorem \ref{fix00} and for  $\l<\l^*(f)$ fixed, we can prove that $T_q$ has a fixed point in $E_q$ and then  problem
\eqref{qqA} has a solution $u\in E_q$.
\end{proof}
\begin{remark} The result in Theorem \ref{fix011} is coherent with the local result $s=1$.

If $q$ is in the complementary interval, that is, $q>\dfrac{s}{1-s}$, we consider the modified problem
$$
\left\{
\begin{array}{rcll}
(-\Delta)^s u &= & d^{q(1-s)-s}(x)|\nabla u|^{q}+\l f & \text{ in }\Omega ,
\\ u &=& 0 &\hbox{  in } \mathbb{R}^N\setminus\Omega,\\
u&>&0 &\hbox{ in }\Omega.
\end{array}%
\right.
$$
Then the existence of a solution holds in the set
$$
E=\{v\in W^{1,1}_0(\O): v\, d^{1-s}\in W^{1,qm}_0(\O)\mbox{  and  } \bigg(\io |\n (v\, d^{1-s})|^{qm} dx\bigg)^{\frac{1}{qm}}\le l^{\frac{1}{q}}\}.
$$
\end{remark}
\begin{remark}
Notice that if $s$ and $m$ satisfy
\begin{equation*}\label{condiss-q}
\frac{N}{N+1}(1-\frac{1}{q'm})<s<1,
\end{equation*}
then existence of solution holds in the space
$$
E=\{v\in W^{1,1}_0(\O): v\in W^{1,qm}_0(\O)\mbox{  and  } \bigg(\io |\n v|^{qm} dx\bigg)^{\frac{1}{qm}}\le l^{\frac{1}{q}}\}.
$$
The proof follows for the same fixed point argument in the proof ot Theorem \ref{fix011}.
\end{remark}
\begin{remark}\label{rm112}
As in the case $q=2s$, since $m>\frac{N}{q'(2s-1)}$, by the same kind of arguments as in Remark \ref{rm11},  it holds that problem \eqref{qqA} has a unique
positive solution in the convex set $E_q$ that is the minimal solution of \eqref{qqA}.
\end{remark}
\section{A nonexistence result} In this section we prove the following partial result of nonexistence, that is a deep defference with the local case.
\begin{Theorem}\label{non1}
Assume that $f\in L^\infty(\O)$  is a nonnegative
function with $f\neq 0$. Then if $\a>\frac{1}{1-s}$, the problem
\begin{equation}\label{grad-nn}
\left\{
\begin{array}{rcll}
(-\Delta )^s u&=&|\nabla u|^{\alpha}+ f & \inn \Omega\\ 
u(x)&=&0 & \inn(\mathbb{R}^N\setminus\Omega),\\
\end{array}\right.
\end{equation}
has no weak solution $u\in W^{1,\a}(\Omega)$.
\end{Theorem}
\begin{proof}We argue by contradiction, assume a weak solution $v\in W^{1,\a}(\Omega)$ to problem \eqref{grad-nn} with $\alpha>\dfrac{1}{1-s}$.
By using the classical Hardy inequality we obtain that
$$\int_\Omega \frac{v^\alpha}{d^\alpha}dx\le \int_\Omega |\nabla v|^\alpha dx<+\infty.$$
By the results in \cite{RS} the solution behaves  as $v\backsimeq d^s$, therefore, as a consequence, 
$$\int_\Omega \dfrac{1}{d^{\alpha(1-s)}}dx<\infty.$$ 
then necessarily  $\alpha<\frac{1}{1-s}$, a contradiction with the hypothesis.
\end{proof}
\section{Some open problems}
The regularity result proved in Lemma \ref{estimmm} is the key in order to show the existence results and it is worthy point out that it depends directly on the
representation formula given in \eqref{repre} and in the pointwise estimates on the Green function $G_s$.

With the previous remark in mind we can formulate the following open problems that should be interesting to solve.
\begin{enumerate}
\item Let consider the operator $L_k$ defined by
$$
L_k(u)=P.V\int_{\ren} \,(u(x)-u(y))k(x,y)\,dy,
$$
where $k$ is a suitable symmetric function.  Consider the problem
\begin{equation*}
\left\{
\begin{array}{rcll}
L_k(u) &= & |\nabla u|^{q}+\l f & \text{ in }\Omega , \\ u &=& 0 &\hbox{  in } \mathbb{R}^N\setminus\Omega,\\ u&>&0 &\hbox{ in }\Omega,
\end{array}%
\right.
\end{equation*}
It seems to be interesting to find  conditions on $L_k$ in order to find the same kind of regularity results, for instance, getting estimates without the
explicit representation formula. This kind of results will be interesting in the pure fractional case to elucidate the existence of a solution if $\dfrac{1}{1-s}>q>\dfrac{s}{1-s}$.
\item In the local case $s=1$ and for the critical exponent $q=2$, an exponential regularity is obtained for any solution to problem \eqref{qqA}. See
    \cite{ADP}.  Precisely the result is that any positive solution satisfies $e^{\a u}-1\in \sob(\O)$ for all $\a<\frac 12$. It seems to be natural to ask
    for the optimal regularity in the fractional case. \item Consider the nonlinear operator
$$ (-\D^s_{p})\, u(x):=P.V\int_{\ren} \,\dfrac{|u(x)-u(y)|^{p-2}(u(x)-u(y))}{|x-y|^{N+ps}} \,dy$$
with $1<p<N$ and  $s\in (0,1)$, then as it was proved in \cite{AAB}, the problem
\begin{equation*}
\left\{
\begin{array}{rcll}
(-\D^s_{p}) u & = & f(x)  & \text{ in } \O, \\ u &= & 0 & \text{ in }\ren\setminus\O,
\end{array}%
\right.
\end{equation*}
has a unique entropy solution for nonnegative datum. It seems to be interesting to show the regularity of $|\n u|$ if  $sp'>\frac{(2-p)N}{p-1}+1$ and to
consider the nonlinear nonlocal version of problem \eqref{P}. \item A problem with nonlocal diffusion and nonlocal  growth term could be formulated for all
$s\in (0,1)$ and it should be interesting to analyze it in detail.
\end{enumerate}


\begin{thebibliography}{99}
\bibitem{AA} B. Abdellaoui {\it Multiplicity Result for Quasilinear Elliptic Problemswith General Growth in the Gradient.} Adv. Nonlinear Stud. \text{8} (2008),
    no. 2, 289-301. \bibitem{AAB} {\sc B. Abdellaoui, A. Attar, R. Bentifour}, {\em On  the fractional p-laplacian equations with weights and general datum}.
    Advances in Nonlinear Analysis. 2017. Available at  http://arxiv.org/pdf/1601.00606

\bibitem{ADP} \textsc{B. Abdellaoui, A. Dall'Aglio,  and I. Peral},  \textit{Some remarks on elliptic problems with critical growth in the gradient}. J.
    Differential Equations \textbf{222} (2006), 21-62.


\bibitem{APi} \textsc{N.E. Alaa, M. Pierre}, {\it Weak solutions of some quasilinear elliptic equations with data measures} SIAM J. Math. Anal., {\bf 24},
    (1993), 23-35.

\bibitem{APP} \textsc{ D. Applebaum}, {\it Lévy processes and stochastic calculus}. Cambridge Studies in Advanced Mathematics, {\bf 116.} 2009.



\bibitem{BMP0}  \textsc{L. Boccardo, F. Murat, J.-P. Puel}, {\it Existence des solutions non born\'ees pour certains \'equations quasi-lin\'eaires},  Portugal
    Math. {\bf  41} (1982), 507-534.

\bibitem{BMP1}\textsc{L. Boccardo, F. Murat, J.-P. Puel}, {\it Existence de solutions faibles pour des \'equations elliptiques quasi-lin\'eaires \`{a} croissance
    quadratique}, in Nonlinear Partial Differential Equations and their Applications, Coll\`{e}ge de France Seminar, Vol.  IV (J.-L. Lions and H. Brezis, eds.),
    Research Notes in Math, {\bf  84}, Pitman, London, 1983, 19-73.

\bibitem{BMP2}\textsc{L. Boccardo, F. Murat, J.-P. Puel}, {\it Resultats d'existence pour certains probl\`{e}mes elliptiques quasi-lin\'eaires}, Ann. Sc. Norm.
    Sup. Pisa {\bf  11}, no. 2 (1984), 213-235.

\bibitem{BMP3} \textsc{L. Boccardo, F. Murat, J.-P. Puel}, {\it Existence of bounded solutions for nonlinear elliptic unilateral problems}, Ann. Mat. Pura Appl.
    {\bf  152}, (1988), 183-196.

\bibitem{BMP4} \textsc{L. Boccardo, F. Murat, J.-P. Puel,} {\it $L^\infty$ estimates for some nonlinear elliptic partial differential equations and application
    to
    an existence result}, SIAM J. Math. Anal. {\bf  2}, (1992), 326-333

\bibitem{BGO} \textsc{L. Boccardo, T. Gallou\"{e}t, L. Orsina,} {\it Existence and nonexistence of solutions for some nonlinear elliptic equations,} J. Anal.
    Math. 73 (1997), 203-223.

\bibitem{BJ00}\textsc{ K. Bogdan, T. Byczkowski}, {\it Potential theory for the a-stable Schr\"{o}dinger operator on bounded Lipschitz domain}, Studia Math. {\bf
    133} (1999), 53-92.

\bibitem{BBK}\textsc{K. Bogdan, T. Byczkowski, T. Kulczycki, M. Ryznar, R. Song, Z. Vondra$\breve{c}$ek}, {\it Potential Analysis of Stable Processes and its
    Extensions}, Lecture Notes in Mathematics {\bf 1980}, 2009.


\bibitem{BJ} \textsc{K. Bogdan, T. Jakubowski}, {\it Estimates of the Green Function for the Fractional Laplacian Perturbed by Gradient}, Potential Anal  {\bf
    36}
    (2012), 455-481.



\bibitem{BJ0}\textsc{ K. Bogdan, T. Kulczycki, A. Nowak}, {\it Gradient estimates for harmonic and $q-$harmonic funcitons of Symmetric stable processes,}
    Illinois
    J. Math. {\bf 46} (2002) no 2, 541-556 .

\bibitem{BJ1} \textsc{K. Bogdan, T. Kulczycki, M. Kwa\'snicki}, {\it Estimates and structure of a-harmonic functions}. Probab. Theory Relat. Fields, {\bf 140}
    (2008), 345-381.

\bibitem{BC}\textsc{ H. Brezis, X. Cabr\'e}, {\it Some Simple Nonlinear PDE's Without Solution,} Boll. Unione. Mat. Ital. Sez. B  Vol 8  No 1 (1998), 223-262.

\bibitem{CZ} \textsc{Z. Chen, R. Song}, {\it Estimates on Green functions and Poisson kernels for symmetric stable process,} Math. Ann. 312,  (1998), 465-501.

\bibitem{CV1} \textsc{H. Chen, L. Veron}, {\it Semilinear fractional elliptic equations involving measures}, J. Differential     Equations 257 (2014)  1457-1486.


\bibitem{CV2}\textsc{ H. Chen, L. Veron}, {\it  Semilinear fractional elliptic equations with gradient nonlinearity involving measures.} Journal of Functional
    Analysis 266 (2014) 5467-5492.

\bibitem{dine} {\sc E. Di Nezza, G. Palatucci,  E. Valdinoci}, {\em Hitchhiker's guide to the fractional Sobolev  spaces}, Bull. Sci. math. {\bf 136} (2012), no.
    5, 521-573.

\bibitem{FM1}\textsc{ V. Ferone, F. Murat}, {\it Quasilinear problems having quadratic growth in the gradient: an existence result when the source term is
    small},
    Equations aux d\'eriv\'ees partielles et applications,  497-515, Gauthier-Villars, Ed. Sci. M\'ed. Elsevier, Paris, 1998.

\bibitem{FM2}\textsc{ V. Ferone, F. Murat}, {\it Nonlinear problems having natural growth in the gradient: an existence result when the source terms are small},
    Nonlinear Anal. T.M.A. {\bf  42}, no. 7 (2000), 1309-1326.

\bibitem{FV} \textsc{F. Ferrari, I. Verbitsky,} {\it Radial fractional Laplace operators and Hessian inequalities}, J. Differential Equations 253 (2012), no. 1,
    244-272.


\bibitem{FLS} {\sc R. L. Frank, E. H. Lieb, R. Seiringer}, {\em Hardy-Lieb-Thirring inequalities for fractional Schr\"{o}dinger operators}, J. Amer. Math. Soc.,
    {\bf 21}  (2008),    925--950.

\bibitem{GT} \textsc{D. Gilbarg, N. Trudinger},  {\em Elliptic Partial Differential Equations of Second Order},  Reprint of the 1998 edition. Classics in
    Mathematics. Springer-Verlag, Berlin, 2001.

\bibitem{GMP}\textsc{ N. Grenon, F. Murat, A. Porretta,} {\it Existence and a priori estimate for elliptic problems with subquadratic gradient dependent terms},
    C. R. Acad. Sci. Paris, Ser. I \textbf{342} (2006), 23--28.

\bibitem{hairier} \textsc{M. Hairer}, \emph{Solving the KPZ equation}, Annals of Mathematics 178 (2013), 559-664.

\bibitem{HMV}\textsc{ K. Hansson, V.G. Maz'ya, I.E. Verbitsky}, {\it Criteria of solvability for multidimensional Riccati equations}, Ark. Mat., {\bf 37},
    (1999),
    87-120.

\bibitem{W} \textsc{G. Karch, W.A. Woyczy\'{n}ski,} {\it Fractal Hamilton-Jacobi-KPZ equations}. Trans. Amer. Math. Soc. 360 (2008), no. 5, 2423-2442.

\bibitem{KPZ} \textsc{M. Kardar, G. Parisi, Y.C. Zhang}, {\it Dynamic scaling of growing interfaces}, Phys. Rev. Lett. {\bf  56}, (1986), 889-892.

\bibitem{KT} \textsc{T. Kulczycki}, {\it Gradinet estimates of $q-$ harmonic functions of fractional schrodinger operator}, Potential Analysis {\bf 39}, 2013,
    No.
    1, 69-98.

\bibitem{KT2} \textsc{T. Kulczycki}, {\it Properties of Green function of symmetric stable processes.} Probab. Math. Statist., {\bf 17}, 2, Acta
Univ. Wratislav. No. 2029, 339-364, 1997.


\bibitem{Kas1}  \textsc{M. Kassmann}, {\it A priori estimates for integro-differential operators with measurable kernels},  Calc. Var.,  34 (2009),    1--21.

\bibitem{LPPS} \textsc{T. Leonori, I. Peral, A. Primo, F. Soria}, \textit{Basic estimates for solution of elliptic and parabolic equations for a class of
    nonlocal
    operators}, Discrete and Continuous Dynamical Systems- A,  35  (2015) no 12 6031-6068.

\bibitem{Lieb} \textsc{E.H. Lieb,} {\it Sharp Constants in the Hardy-Littlewood-Sobolev and Related Inequalities },  Annals of Mathematics, Vol. 118, No. 2,
    (1983), 349-374.

\bibitem{Lieb-Loss}\textsc{ E.H. Lieb. M. Loss}, {\it Analysis} Graduate Studies in Mathematics Volume 14,  American Mathematical Society. Providence, R.I.,
    2001.

\bibitem{Loss} \textsc{M. Loss, C. Sloane}, {\it Hardy inequalities for fractional integrals on general domains}, Journal of Functional Analysis Vol. 259, No 6,
    (2010), 1369-1379.

\bibitem{MV} \textsc{V.G. Maz'ya, I.E. Verbitsky}, {\it Capacitary estimates for fractional integrals, with applications to partial differential equations and
    Sobolev multipliers,} Ark. Mat. 33 (1995), 81-115.

\bibitem{CON2} \textsc{T. Mengesha,  P. Nguyen Cong} {\it Quasilinear Riccati type equations with distributional data in Morrey space framework.} J. Differential
    Equations
    260 (2016), no. 6, 5421-5449.

\bibitem{CON1} \textsc{P.  Nguyen Cong}, {\it  Morrey global bounds and qusilinear Riccarti type equation bellow the natural exponent}, J. math. Pures Appl. 102,
    (2014), 99-123.

\bibitem{P}\textsc{ A. Porretta}, {\it Elliptic equations with first order terms}, the Ecole CIMPA at Alexandria (Egypt), 2009.
    
\bibitem{RS} X. Ros-Oton and J. Serra, \textit{The Dirichlet problem for the fractional Laplacian: regularity up to the boundary}. Journal de Math\'{e}matiques
    Pures et Appliqu\'{e}es  \textbf{101} 3 (2014), 275-302.


\bibitem{St} \textsc{G. Stampacchia}, {\it Le probl\`eme de Dirichlet pour les équations elliptiques du second ordre \`a    coefficients discontinus},  Ann.
    Inst.
    Fourier (Grenoble), {\bf 15}   (1965), 189-258.

\bibitem{Stein} \textsc{E. M. Stein}, {\it Singular Integrals and Differentiability Properties of Functions}. Princeton University Press, Princeton N.J. 1970.

\bibitem{W1} \textsc{W. A. Woyczy\'{n}ski}, {\it  Burgers-KPZ turbulence}. G\"{o}ttingen lectures. Lecture Notes in Mathematics, 1700. Springer-Verlag, Berlin,
    1998
\end{thebibliography}
\end{document}